\newcommand*\if@single[3]{%
  \setbox0\hbox{${\mathaccent"0362{#1}}^H$}%
  \setbox2\hbox{${\mathaccent"0362{\kern0pt#1}}^H$}%
  \ifdim\ht0=\ht2 #3\else #2\fi
  }
\newcommand*\rel@kern[1]{\kern#1\dimexpr\macc@kerna}
\newcommand*\widebar[1]{\@ifnextchar^{{\wide@bar{#1}{0}}}{\wide@bar{#1}{1}}}
\newcommand*\wide@bar[2]{\if@single{#1}{\wide@bar@{#1}{#2}{1}}{\wide@bar@{#1}{#2}{2}}}
\newcommand*\wide@bar@[3]{%
  \begingroup
  \def\mathaccent##1##2{%
    \if#32 \let\macc@nucleus\first@char \fi
    \setbox\z@\hbox{$\macc@style{\macc@nucleus}_{}$}%
    \setbox\tw@\hbox{$\macc@style{\macc@nucleus}{}_{}$}%
    \dimen@\wd\tw@
    \advance\dimen@-\wd\z@
    \divide\dimen@ 3
    \@tempdima\wd\tw@
    \advance\@tempdima-\scriptspace
    \divide\@tempdima 10
    \advance\dimen@-\@tempdima
    \ifdim\dimen@>\z@ \dimen@0pt\fi
    \rel@kern{0.6}\kern-\dimen@
    \if#31
      \overline{\rel@kern{-0.6}\kern\dimen@\macc@nucleus\rel@kern{0.4}\kern\dimen@}%
      \advance\dimen@0.4\dimexpr\macc@kerna
      \let\final@kern#2%
      \ifdim\dimen@<\z@ \let\final@kern1\fi
      \if\final@kern1 \kern-\dimen@\fi
    \else
      \overline{\rel@kern{-0.6}\kern\dimen@#1}%
    \fi
  }%
  \macc@depth\@ne
  \let\math@bgroup\@empty \let\math@egroup\macc@set@skewchar
  \mathsurround\z@ \frozen@everymath{\mathgroup\macc@group\relax}%
  \macc@set@skewchar\relax
  \let\mathaccentV\macc@nested@a
  \if#31
    \macc@nested@a\relax111{#1}%
  \else
    \def\gobble@till@marker##1\endmarker{}%
    \futurelet\first@char\gobble@till@marker#1\endmarker
    \ifcat\noexpand\first@char A\else
      \def\first@char{}%
    \fi
    \macc@nested@a\relax111{\first@char}%
  \fi
  \endgroup
}
\newcommand{\Bsq}{\ensuremath{B_\square}}
\newcommand{\K}{\ensuremath{\mathbb{K}}}
\newcommand{\N}{\ensuremath{\mathbb{N}}}
\newcommand{\R}{\ensuremath{\mathbb{R}}}
\newcommand{\Z}{\ensuremath{\mathbb{Z}}}
\newcommand{\bI}{\ensuremath{\mathbf{I}}}
\newcommand{\cA}{\ensuremath{\mathcal{A}}}
\newcommand{\cB}{\ensuremath{\mathcal{B}}}
\newcommand{\cF}{\ensuremath{\mathcal{F}}}
\newcommand{\cG}{\ensuremath{\mathcal{G}}}
\newcommand{\cH}{\ensuremath{\mathcal{H}}}
\newcommand{\cR}{\ensuremath{\mathcal{R}}}
\newcommand{\cT}{\ensuremath{\mathcal{T}}}
\newcommand{\CLip}{\ensuremath{C^\mathrm{Lip}}}
\newcommand{\Ccell}{\ensuremath{C}}
\newcommand{\Haus}{\ensuremath{\mathcal{H}}}
\newcommand{\from}{\colon}
\newcommand{\extend}[1]{\widebar{#1}}
\newcommand{\itref}[1]{{property \ref{#1}}}
\newcommand{\fnorm}[1]{\cF(#1)}
\renewcommand{\setminus}{\smallsetminus}
\DeclareMathOperator{\Core}{core}
\DeclareMathOperator{\cov}{cov}
\DeclareMathOperator{\Clos}{closure}
\DeclareMathOperator{\HC}{HC}
\DeclareMathOperator{\NO}{NO}
\DeclareMathOperator{\diam}{diam}
\DeclareMathOperator{\Lip}{Lip}
\DeclareMathOperator{\mass}{mass}
\DeclareMathOperator{\vol}{vol}
\DeclareMathOperator{\area}{area}
\DeclareMathOperator{\size}{size}
\DeclareMathOperator{\FV}{FV}
\DeclareMathOperator{\supp}{supp}
\DeclareMathOperator{\inter}{int}
\DeclareMathOperator{\nbhd}{nbhd}
\newtheorem{thm}{Theorem}[section]
\newtheorem{lemma}[thm]{Lemma}
\newtheorem{prop}[thm]{Proposition}
\newtheorem{cor}[thm]{Corollary}
\theoremstyle{remark}
\newtheorem{defn}[thm]{Definition}
\newtheorem*{ack}{Acknowledgments}
\title{Quantitative nonorientability of embedded cycles}
\date{\today}
\author{Robert Young}
\address{Courant Institute of Mathematical Sciences\\
  New York University\\
  251 Mercer St.\\
  New York, NY  10012\\
  USA}
\email{ryoung@cims.nyu.edu}
\begin{document}

\begin{abstract}
  We introduce an invariant linked to some foundational questions in
  geometric measure theory and provide bounds on this invariant by
  decomposing an arbitrary cycle into uniformly rectifiable pieces.
  Our invariant measures the difficulty of cutting a nonorientable
  closed manifold or mod-2 cycle in $\R^n$ into orientable pieces, and
  we use it to answer some simple but long-open questions on filling
  volumes and mod-$\nu$ currents.
\end{abstract}

\maketitle

\tableofcontents

\section{Introduction}
If $T$ is a Lipschitz curve in $\R^N$, there is a minimal surface $U$
whose boundary is $T$.  If we trace $T$ twice to obtain a curve $2T$,
there is a minimal surface $U'$ whose boundary is $2T$.  At first
glance, one might guess that $U'=2U$.  This is easy to prove when
$N=2$ and is a theorem of Federer \cite{FedererRealVariational} when
$N=3$, but remarkably, it is false when $N\ge 4$!  L. C. Young
constructed an example of a curve $T\from S^1\to \R^4$ that lies on an
embedded Klein bottle and a chain $U$ such that $U$ is a minimal
filling of $T$, but $2U$ is not a minimal filling of $2T$.  In fact,
$\mass U'$ is only about $1.5 \mass U$ \cite{LCYoung}.

A version of Young's example is shown in
Figure~\ref{fig:youngExample}.  Consider a Klein bottle $K$ embedded
in $\R^4$ and draw $2k+1$ equally-spaced rings on $K$.  Since these
rings are drawn on a Klein bottle, we can orient them so that adjacent
rings have ``opposite'' orientations.  Let $T$ be the sum of these
rings.

On one hand, we can fill $2T$ with a chain supported on $K$.  Since
the rings have alternating orientations, we can fill each pair of
adjacent rings with a thin cylindrical band.  The curves in $T$ cut
$K$ into $2k+1$ bands, and if we give these bands alternating
orientations, their boundary is $2T$ (right side of
Fig.~\ref{fig:youngExample}).  When $k$ is large, this is nearly
optimal and has mass roughly $\area{K}$.

\begin{figure}[b]
  \begin{tabular}{ccc}
    \includegraphics[width=.3\textwidth]{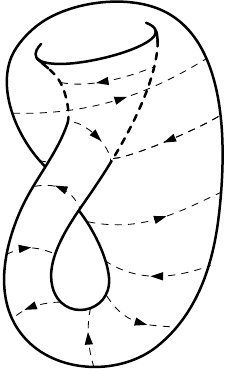} & 
    \includegraphics[width=.3\textwidth]{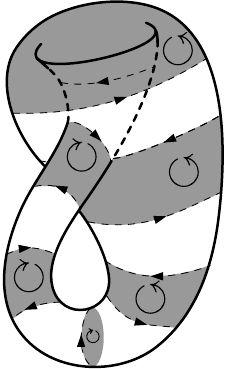} &
    \includegraphics[width=.3\textwidth]{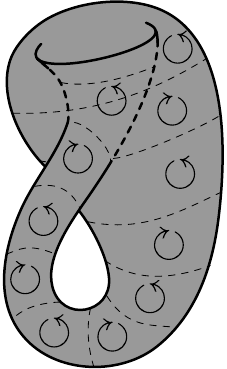}\\
    $T$ & A filling of $T$ & A filling of $2T$
  \end{tabular}
  \caption{\label{fig:youngExample}
    Fillings of a 1-cycle on a Klein bottle.  The 1-cycle $T$
    consists of $2k+1$ loops with alternating orientations.  In the
    middle, we fill $T$ with $k$ cylindrical bands and a disc, and on
    the right, we fill $2T$ with $2k+1$ cylindrical bands with alternating
    orientations.}
\end{figure}

On the other hand, we cannot use the same technique to fill $T$.
Since there's an odd number of rings in $T$, we can fill all but one
of the rings using $k$ bands, but we need to fill the last ring with a
disc (middle of Fig.~\ref{fig:youngExample}).  When $k$ is large, a
filling like this is nearly optimal and has area roughly $(\area{K})/2$
plus the area of the extra disc --- well over half the area of a minimal
filling of $2T$.

Questions in geometric measure theory related to this example and
examples with different multipliers found by Morgan
\cite{MorganMultiples} and White \cite{WhiteMultiples} have been open
almost since Federer and Fleming's first papers developing normal and
integral currents.  Because of these examples, the flat distance
\begin{equation}\label{eq:flatNorm}
  \fnorm{A}=\inf\{\mass R+\mass S\mid A=R+\partial S, R\in \cR_d(\R^N), S\in\cR_{d+1}(\R^N)\}.
\end{equation}
is not a norm; if $T$ is as above, then we may have
$\fnorm{\nu T} < \nu\fnorm{T}$.  Consequently, several basic questions
have remained unanswered, including:
\begin{enumerate}
\item If $\nu>0$ is a positive integer and $\cF_d(\R^N)$ is the space
  of integral flat $d$--chains in $\R^N$, is the multiply-by-$\nu$ map
  $f\from \cF_d(\R^N)\to \cF_d(\R^N)$, $f(T)=\nu T$ an embedding?
\item Is the set of flat chains modulo $\nu$ a quotient of the
  integral flat chains?
\item We can define a real flat norm by replacing the rectifiable
  currents in \eqref{eq:flatNorm} by normal currents.  How is the real
  flat norm related to the flat distance?
\end{enumerate}
In this paper, we will relate the first two of these questions to the
geometry of nonorientable cycles in $\R^N$ and answer both of them
positively (Corollaries~\ref{cor:multiplesEmbedding} and
\ref{cor:integralCurrentsLift}).

Specifically, we will define the following invariant.  If $A$ is a
mod--$\nu$ cycle in $\R^N$ (a Lipschitz cycle or integral current
modulo $\nu$) and $R$ is a $\Z$--cycle (Lipschitz cycle or integral
current) such that $A\equiv R\pmod \nu$, we say that $R$ is a
\emph{pseudo-orientation} of $A$.  Let the \emph{nonorientability} of
$A$ be
\[\NO(A)=\inf \{\mass R\mid \text{$R$ is a pseudo-orientation of
  $A$}\}.\]

Any smooth submanifold of $\R^N$ has a pseudo-orientation.  For
example, suppose that $M$ is a nonorientable genus-$g$ surface
smoothly embedded in $\R^N$ and that $A=[M]$ is the fundamental class
of $M$.  This is a cycle with $\Z_2$ coefficients, but we can lift it
to a cycle with integer coefficients by cutting $M$ into orientable
pieces.  Let $\Gamma$ be a smooth graph embedded in $M$ whose
complement consists of orientable pieces $M_1,\dots, M_n$.  We choose
orientations on the $M_i$ arbitrarily to get fundamental classes
$[M_1],\dots,[M_n]$.  Then $R_0:=\sum [M_i]$ is a 2--chain over $\Z$
and $\supp \partial R_0\subset \Gamma$.  Each edge of $\Gamma$ occurs
in $\partial R_0$ with coefficient 0 or $\pm2$, depending on the
orientations of the neighboring regions in $M$.  Let $R_1$ be a chain
with integer coefficients such that $\partial R_1=\partial R_0/2$ and
define $R=R_0+2R_1$.  Then
$$R\equiv \sum [M_i]\equiv [M] \pmod{2},$$
so $R$ is a pseudo-orientation of $[M]$.  The mass of $R$, however,
could be much larger than the area of $M$, especially if $M$ is very
complicated.  

In this paper, we will show that the nonorientability of a cycle is
bounded by its mass:
\begin{thm}\label{thm:NObound}
  For every $\nu,d,N>0$, there is a $c>0$ such that
  $$\NO(A)\le c\mass A$$
  for every $A\in \bI^\nu_d(\R^N)$ with $\partial A=0$.
\end{thm}
Here, $\bI^\nu_d(\R^N)$ is the set of integral currents modulo $\nu$;
when $\nu>0$, $\nu\in \Z$, the integral currents modulo $\nu$ are a
chain complex that contains the mod-$\nu$ polyhedral chains as a dense
subset.  

Note that it is not clear \emph{a priori} that every integral current
mod $\nu$ with no boundary has a pseudo-orientation.  Federer
\cite[4.2.26]{FedererGMT} asserted that there are integral currents
modulo $2$ that do not lift to integral currents, though his example,
an infinite sum of projective planes with finite total mass, turns out
to have an error (see \cite[2.5]{PaurStokes}).  

The theorem will follow from the following statement about cycles in
the unit grid in $\R^N$
\begin{thm}\label{thm:cellNObound}
  For every $\nu,d,N>0$, there is a $c>0$ such that if
  $A\in \Ccell_d(\tau;\Z_\nu)$ is a mod-$\nu$ cellular cycle in the
  unit grid in $\R^N$, then there is a cellular cycle
  $R\in \Ccell_d(\tau;\Z)$ such that $A\equiv R\pmod{\nu}$ and
  $\mass R\le c\mass A$.  It follows that
  $$\NO(A)\le c\mass A.$$
\end{thm}
(Indeed, though several of our applications will involve integral
currents and flat chains, our use of currents will be restricted to
the proofs of those applications, and the proof of
Theorem~\ref{thm:cellNObound} can be read without previous familiarity
with currents.)

The main difficulty in proving Theorem~\ref{thm:cellNObound} is
dealing with cycles that have complex topology at many scales and many
locations.  For example, consider the following sequence of surfaces:
Let $C_0$ be the 2-dimensional surface of a unit 3-cube embedded in
$\R^4$.  The surface $C_0$ is orientable, but we can make it
nonorientable by gluing crosscaps to its faces.  Let $\Sigma$ be a
crosscap consisting of a union of faces in the unit grid, with
boundary a square of side length 10.  We can partition $C_0$ into $6$
unit squares and construct $C_1$ by replacing each square by a scaled
copy of $\Sigma$.

Then $C_1$ is a surface in the grid of side length
$10^{-1}$, homeomorphic to a connected sum of six projective planes.
Its fundamental class is a mod-2 cycle, and any pseudo-orientation of
$C_1$ must cut through each crosscap, so $\NO([C_1])\sim 1$.

There are no large faces in $C_1$ to replace by crosscaps, but we can
still add nonorientability at smaller scales by replacing smaller
faces in $C_1$ by smaller crosscaps.  Choose $100$ faces of $C_1$ of
side length $10^{-1}$ and replace them by scaled copies of $\Sigma$ to
obtain $C_2$.  Each new crosscap contributes roughly $10^{-2}$ to the
nonorientability, so in total, they contribute roughly $1$.

Proceeding inductively, we replace $100^i$ faces of $C_i$ of side
length $10^{-i}$ to obtain $C_{i+1}$.  A pseudo-orientation of $C_k$
must cut through all of the crosscaps at every scale, so
$$\NO([C_k])\sim \sum_{i=0}^{k-1} 100^i 10^{-2i}=k.$$ 
This is much larger than the area of the surface we started with.  The
only reason that this does not contradict
Theorem~\ref{thm:NObound} is that each added crosscap of diameter
$r$ also increases the area of the surface by roughly $r^2$, so
$$\area(C_k)\sim 1+\sum_{i=0}^{k-1} 100^i 10^{-2i}\sim k+1.$$

One can also imagine more complicated versions of $C_k$ using
different scale factors or replacing squares by more complicated
surfaces.  Theorem~\ref{thm:NObound} implies that in all such
constructions, the extra nonorientability coming from replacing a
square by a surface is bounded by the added area.  Nevertheless, we
conjecture that the ratio $\frac{\NO(A)}{\mass A}$ approaches its
supremum for a sequence of self-similar surfaces like the $C_i$.

A remarkable feature of Theorem~\ref{thm:NObound} is that it gives a
bound that is independent of the topology of $A$; many related bounds
depend on the topology.  If $d=2$ and $A$ is the fundamental class of
a surface $M\subset \R^N$, then bounds on the nonorientability of
$[M]$ are related to bounds on systoles of $M$, which typically
depend on the genus of $M$.  

For example, as noted above, the nonorientability of $M$ is related to
the difficulty of partitioning $M$ into orientable pieces.  By
choosing an orientation on each piece, one can lift such a partition
to a partition of the double cover $\tilde{M}$ into two pieces of
equal area.  Cheeger's inequality implies that there are surfaces
(scalings of arithmetic hyperbolic surfaces) with unit area and genus
$g$ such that any curve or set of curves that cuts the surface into
two equal pieces has length at least $\sqrt{g}$.  Similarly, one way
to obtain a graph $\Gamma$ in $M$ whose complement consists of
orientable pieces is to let $\Gamma$ be a pants decomposition of
$M$.  In a paper with Larry Guth and Hugo Parlier \cite{GPY}, we
showed that every pants decomposition of a ``random'' genus $g$
surface of area 1 has total length at least $g^{2/3-\epsilon}$.

This could suggest that some of the unusual geometric properties
(large systoles, expander-type properties, large pants decompositions,
etc.) that occur in arithmetic hyperbolic surfaces and random surfaces
may not occur in surfaces that embed bilipschitzly (with respect to
the euclidean metric, not in the sense of Nash) in $\R^N$.  It would
be interesting to know if this is the case.

\subsection{Applications}\label{sec:introApplications}

Theorem~\ref{thm:NObound} has several applications in geometric
measure theory and the study of currents.  First,
Theorem~\ref{thm:NObound} provides an answer to a question of
L. C. Young.  Let us define the \emph{filling volume} $\FV(T)$ of a
Lipschitz $d$-cycle $T\in \CLip_d(\R^N)$ with $\partial T=0$ to be the
infimal mass of a Lipschitz $(d+1)$--chain $U\in \CLip_{d+1}(\R^N)$
such that $\partial U=T$.  It follows from Theorem~\ref{thm:NObound}
that:
\begin{cor}\label{cor:YoungsProblem}
  For any $d,N, \nu>0$, there is a $c>0$ such that for any $d$--cycle
  $T\in \CLip_d(\R^N)$,
  $$\FV(T)\le c \FV(\nu T).$$ 
\end{cor}

The behavior of $c$ when $\nu$ is large is an open and interesting
question, because the limit
$\lim_{\nu\to \infty}\frac{\FV(\nu T)}{\nu}$ is equal to the
\emph{real filling volume} $\FV_\R(T)$ of $T$.  The real filling
volume is the infimal mass of a Lipschitz $(d+1)$--chain
$U\in \CLip_{d+1}(\R^N;\R)$ with real coefficients such that
$\partial U=T$.  L. C. Young's example shows that the integral and
real filling volumes of a cycle can be different, but it is unknown
whether the ratio of these filling volumes is bounded.

The theorem also answers some questions about integral currents and
flat chains that have been open since the 1960's.  In particular, the
following corollaries answer a question in 4.2.26 of \cite{FedererGMT}
and part of a cluster of related questions studied by Almgren
\cite{WhiteMathOfAlmgren}.
\begin{cor}\label{cor:multiplesEmbedding}
  Let $d,n,\nu\in \N$.  The multiply-by-$\nu$ map
  $f\from \cF_d(\R^N)\to \cF_d(\R^N)$, $f(T)=\nu T$ is an embedding,
  and the images $\nu \cF_d(\R^N)$ and $\nu \bI_d(\R^N)$ are closed.
\end{cor}
and 
\begin{cor}\label{cor:integralCurrentsLift}
  If $T\in \bI^\nu_d(\R^N)$ is an integral current mod $\nu$, then
  $T\equiv T_\Z \pmod{\nu}$ for some integral current $T_\Z$.
\end{cor}
These corollaries answer a question in 4.2.26 of \cite{FedererGMT} and

Corollary~\ref{cor:integralCurrentsLift} is somewhat subtle because of
the terminology used to describe currents mod $\nu$.  One can define
quotients $\cF_d(\R^N)/\nu\cF_d(\R^N)$ and
$\bI_d(\R^N)/\nu\bI_d(\R^N)$ that have many of the properties of flat
chains and integral currents modulo $\nu$.  But it is not clear
\emph{a priori} that these quotients satisfy completeness and
compactness properties.  For instance, any projective plane, and thus
any finite sum of projective planes, is congruent mod $2$ to an
integral current, but it is unclear whether an infinite sum with
finite total mass is congruent to an integral current.  To avoid this
problem, Federer \cite{FedererGMT} defined the flat chains modulo
$\nu$ as the quotient
$\cF^\nu_d(\R^N)=\cF_d(\R^N)/\overline{\nu\cF_d(\R^N)}$ by the closure
of the multiples of $\nu$ and defined the integral currents modulo
$\nu$ as the set $\bI^\nu_d(\R^N)$ of rectifiable currents mod $\nu$
with rectifiable boundary mod $\nu$.
Corollary~\ref{cor:multiplesEmbedding} and
Corollary~\ref{cor:integralCurrentsLift} imply that these definitions
are the same as the naive definitions.
\begin{cor}\label{cor:modNuQuotient}
  $$\cF^\nu_d(\R^N)=\cF_d(\R^N)/\nu\cF_d(\R^N)$$
  $$\bI^\nu_d(\R^N)=\bI_d(\R^N)/\nu\bI_d(\R^N).$$
\end{cor}

\subsection{Techniques}
In this section, we will sketch the proof of
Theorem~\ref{thm:NObound}.  We will go into more detail in
Section~\ref{sec:expandedSketch}.

As we saw in the example $A=[C_k]$ above, $\NO(A)$ is a sum of
contributions from many different places and scales; the surface $C_k$
consists of many crosscaps, and one large crosscap contributes as much
nonorientability as many small ones.  One can use the Federer-Fleming
deformation theorem to bound the amount of nonorientability that comes
from each scale (see Prop.~\ref{prop:VlogVbound}), but
Theorem~\ref{thm:NObound} requires a bound on the total contribution
from all scales.

We solve this problem by developing new techniques to decompose cycles
in $\R^N$ into topologically and geometrically simple pieces.  In
particular, we devise a way to break down a cycle in $\R^N$ into a sum
of cycles that either lie close to planes or are topologically
bounded.  The decomposition has two stages: first, we decompose cycles
in $\R^N$ into cycles with uniformly rectifiable supports, then we
apply corona decompositions to break those cycles into pieces with
bounded geometry and topology.

Uniformly rectifiable sets were developed by David and Semmes as a
quantitative version of the notion of rectifiable sets.  (See
Section~\ref{sec:URprelims} for definitions and references.)  The
first part of the proof is the following theorem.
\begin{thm}\label{thm:URdecomp}
  If $A\in \Ccell_d(\tau;\Z_\nu)$ is a $d$-cycle in the unit grid in
  $\R^N$, then there are cycles
  $M_1,\dots,M_k\in \Ccell_d(\tau;\Z_\nu)$ and uniformly rectifiable
  sets $E_1,\dots, E_k\subset \R^N$ with bounded uniform
  rectifiability constants such that
  \begin{enumerate}
  \item $A=\sum_{i} M_i,$
  \item $\supp M_i\subset E_i,$
  \item $\mass M_i \sim |E_i|,$ and
  \item $\sum_i |E_i| \lesssim \mass A.$
  \end{enumerate}
\end{thm}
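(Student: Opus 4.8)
The plan is to decompose $A$ by a stopping-time argument on dyadic cubes, producing one piece $M_i$ for each cube at which the density of $A$ is large. Recall that $A$ is supported on cells of the unit grid $\tau$, so $\mass A$ is just the number of $d$-cells in $\supp A$ (counted with their $\Z/2$ coefficients). For a dyadic cube $Q$ of side $2^j$, let $A_Q$ denote the ``restriction'' of $A$ to $Q$ --- more precisely, the cellular $d$-chain consisting of those cells of $\supp A$ contained in $Q$. This need not be a cycle, but its boundary $\partial A_Q$ is supported in a neighborhood of $\partial Q$, and by the isoperimetric/cone inequality in the cube one can cap it off: there is a $(d-1)$-chain $\partial A_Q$ equals the boundary of some chain $W_Q$ supported near $\partial Q$ with $\mass W_Q \lesssim (\mass \partial A_Q)^{d/(d-1)} \lesssim (2^j)^{d}$, so that $M_Q := A_Q + W_Q$ (coefficients mod $2$) \emph{is} a cycle. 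We will only use such caps at a carefully chosen collection of cubes.

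First I would run a Calderón--Zygmund-type selection. Start with a huge cube $Q_0$ containing $\supp A$; call a dyadic subcube $Q\subset Q_0$ \emph{heavy} if $\mass A_Q \ge \epsilon (\ell(Q))^d$ for a small dimensional constant $\epsilon$, and let $\cQ$ be the collection of \emph{maximal} heavy cubes. These are disjoint, and by maximality each $Q\in\cQ$ has a dyadic parent that is light, which gives $\mass A_Q \le \mass A_{\hat Q} \lesssim (\ell(Q))^d$; combined with heaviness this yields $\mass M_Q \sim \mass A_Q \sim (\ell(Q))^d$, which is condition (3) once we set $E_Q$ to be (a uniformly rectifiable thickening of) the support of $M_Q$. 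Summing heaviness over the disjoint family $\cQ$ gives $\sum_{Q\in\cQ}(\ell(Q))^d \le \epsilon^{-1}\sum_{Q\in\cQ}\mass A_Q \le \epsilon^{-1}\mass A$, which is condition (4). The leftover $A - \sum_{Q\in\cQ} M_Q$ is supported outside all heavy cubes, hence is a $d$-cycle with the property that its mass in every dyadic cube is at most $\epsilon(\ell(Q))^d$ --- an ``Ahlfors-regular from above with small constant'' condition. Iterating the construction at smaller scales, or rather observing that a cycle which is this sparse at \emph{every} scale must be supported on a uniformly rectifiable set of the appropriate size (its support has the Ahlfors upper-regularity and, because it is a mod-$2$ cycle of finite mass with no low-density points, one can verify a Carleson-type condition such as the bilateral weak geometric lemma), lets us absorb the remainder into one final pair $(M_{\mathrm{rem}}, E_{\mathrm{rem}})$.

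The step I expect to be the main obstacle is verifying \emph{uniform rectifiability} of the sets $E_i$, rather than mere rectifiability or Ahlfors regularity. The support of $M_Q$ lives in a single dyadic cube at scale $\ell(Q)$ and has mass comparable to $(\ell(Q))^d$, so it is Ahlfors $d$-regular from above at the top scale and, by the heaviness built into the selection, also from below \emph{on average}; but to invoke the David--Semmes machinery one needs one of the standard characterizations of UR (e.g. bilateral weak geometric lemma, the existence of big pieces of Lipschitz images, or Carleson estimates on $\beta$-numbers) to hold at \emph{all} scales and locations inside $E_i$. My plan here is to not try to control the geometry of $\supp M_Q$ directly, but instead to \emph{enlarge} it: take $E_i$ to be the union of $\supp M_i$ with a controlled collection of grid $d$-faces filling it out to a set that is visibly Ahlfors regular and has big pieces of Lipschitz graphs at every scale, at the cost of only a constant-factor increase in $|E_i|$ (this is where condition (2), $\supp M_i\subset E_i$, rather than equality, is essential, and where I would lean on combinatorial fillings in subcubes together with the iterated stopping-time structure). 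Checking that this enlargement can be done with the mass bound $\sum_i|E_i|\lesssim\mass A$ preserved, and that the resulting $E_i$ genuinely satisfy a David--Semmes UR criterion, is the technical heart of the argument; everything else is bookkeeping with the cone inequality and the disjointness of the selected cubes.
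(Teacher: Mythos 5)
The decisive gap is the step you yourself flag as the ``technical heart'': enlarging $\supp M_Q$ to a uniformly rectifiable $E_Q$ with $|E_Q|\lesssim \ell(Q)^d$. Your stopping-time selection controls only the \emph{mass} of $A_Q$ in each selected cube, not its geometry, and in general no such enlargement exists. The paper's own Cantor-type example (Sec.~\ref{sec:URprelims}) is a counterexample: take $d=1$ and let $A$ be the $n$-th stage of the four-corner construction, i.e.\ $4^n$ unit square boundaries arranged self-similarly in a cube $Q_0$ of side $4^n$. Then $\mass A\sim 4^n\sim \ell(Q_0)^d$, so $Q_0$ itself is heavy and your algorithm outputs the single piece $M=A$; but a $1$-dimensional Ahlfors-regular, uniformly rectifiable set of measure $\lesssim 4^n$ containing $\supp A$ would have to lie in an Ahlfors-regular curve of length $\lesssim 4^n$, while a traveling-salesman count over the $\sim n$ scales of the configuration shows any connected set meeting all $4^n$ squares has length $\gtrsim n\,4^n$. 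So the UR constant of any admissible $E$ blows up with $n$: containment (rather than equality) in condition (2) does not save you, because UR with bounded constants is a constraint at \emph{all} scales and a prescribed multi-scale ``bad'' subset already violates it. This is exactly why the correct decomposition of such an $A$ (and of the example $D_k$ in Sec.~\ref{sec:systolic}) is into \emph{differences of successive stages}, which no restriction-to-cubes scheme produces. The paper's proof is adaptive for this reason: if $\supp A_i$ is not locally UR, then by Prop.~\ref{prop:QMtoUR} it is not a weak quasiminimizer, so there is a padded deformation strictly decreasing its measure; pushing the image back into the grid (Lemmas~\ref{lem:FFsets} and \ref{lem:cellularize}) gives $A_{i+1}$ with $|\supp A_i|-|\supp A_{i+1}|\gtrsim |E_i|$, where $E_i$ is UR by Lemma~\ref{lem:URapprox}, and the strictly decreasing integer masses force termination. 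Density information alone cannot detect which deformation to make, so the Calder\'on--Zygmund selection cannot substitute for this.

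There are secondary problems as well. With any $\epsilon\le 1$, every unit dyadic cube meeting $\supp A$ is heavy, so the maximal heavy cubes cover $\supp A$ and the leftover $A-\sum_{Q\in\mathcal{Q}}M_Q$ is \emph{not} supported outside the heavy cubes: it is exactly $\sum_Q W_Q$, concentrated near the boundaries of the selected cubes, and there is no reason for it to have density $\le\epsilon$ at every scale. Even granting such sparseness, upper regularity with small constant does not imply uniform rectifiability (low-density Cantor-type sets satisfy it), and ``mod-$2$ cycle with no low-density points'' is neither verifiable (supports of cycles need not be lower regular at large scales) nor sufficient for a bilateral weak geometric lemma. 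Finally, the capping estimate $\mass W_Q\lesssim \ell(Q)^d$ needs $\mass\partial A_Q\lesssim \ell(Q)^{d-1}$, which a raw restriction does not give (all of $A_Q$ may hug $\partial Q$); that point is repairable by an averaged choice of the cutting surface, but the UR-enlargement gap above is not.
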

Here, $|\cdot|$ represents $d$-dimensional Hausdorff measure.  The
proof of Theorem~\ref{thm:URdecomp} relies on results of David and
Semmes on quasiminimizing sets; they show that if a set $E$ is not
uniformly rectifiable, then there is a compactly supported deformation
that decreases the volume of the set.  We use a sequence of such
deformations to construct the desired decomposition.

This decomposition breaks complicated surfaces into ``simple''
pieces.  For example, the surface $C_k$ above is built by starting
with a simple surface and repeatedly replacing discs in the surface by
handles and crosscaps.  This decomposition reverses this process.
That is, if we write
$$C_k=C_0+\sum_{i=0}^{k+1} (C_{i+1}-C_{i}),$$
then we can write each term $C_{i+1}-C_{i}$ as a sum of the
fundamental classes of $100^{i}$ disjoint projective planes of
diameter roughly $10^{-i}$.  We can thus write $C_k$ as the sum of the
unit cube and a large number of projective planes of different scales.
The total area of all of these pieces is at most a multiple of the
area of $C_k$, and each piece is uniformly rectifiable.  In fact, each
projective plane is a scaling of a fixed projective plane, so each
piece is uniformly rectifiable with the same constants.

The second stage of the decomposition is more complicated to describe
and we will sketch it more fully in Section~\ref{sec:URcaseSketch}.
The idea of the decomposition is that a uniformly rectifiable set $E$
is close to a Lipschitz graph at ``most'' locations and scales.  This
can be expressed in terms of a \emph{corona decomposition}, which,
very roughly speaking, breaks $E\times (0,\infty)$ into ``good'' and
``bad'' cubes so that the total size of the set of bad cubes is
bounded and such that when $(x,r)$ lies in a good cube, $B_r(x)$ is
close to a Lipschitz graph.  Furthermore, the good cubes can be
collected into \emph{stopping-time regions} so that all the cubes in a
stopping-time region lie close to the same Lipschitz graph, and the
total size of the set of stopping-time regions is also bounded.

If $M$ is supported on a uniformly rectifiable set, we will use a
corona decomposition of $\supp M$ to decompose it into a sum of
cycles, one for each bad cube and each stopping-time region.  The
cycle corresponding to a bad cube will be a sum of boundedly many
cells; these cycles are combinatorially simple, but could have
nontrivial topology.  

The stopping-time regions are more complex.  Each
stopping-time region may contain arbitrarily many good cubes, so the
corresponding cycle may consist of arbitrarily many faces.  A
stopping-time region, however, lies close to a Lipschitz graph, which
restricts the topology of the cycle.  

Thus, each bad cube corresponds to a geometrically simple cycle with
nontrivial topology and each stopping-time region corresponds to a
cycle with complicated geometry but controlled topology.  The cycles
sum to $M$, and each piece $B_i$ satisfies an inequality
$\NO(B_i)\lesssim \mass B_i$.  The bounds on the total size of the
corona decomposition then imply
$$\NO(M)\lesssim \sum_i \mass B_i\lesssim \mass(M).$$
Combining these two stages, we obtain the desired bound on $\NO(A)$.

\subsection{Overview}

We start by introducing some necessary notation and other
preliminaries (Sec.~\ref{sec:prelims}), including cellular and
Lipschitz chains and some versions of the Federer-Fleming deformation
theorem.  Then, in Section~\ref{sec:applications}, we derive the
applications Thm.~\ref{thm:NObound} and
Corollaries~\ref{cor:YoungsProblem}--\ref{cor:integralCurrentsLift}
from Thm.~\ref{thm:cellNObound}.

In the remaining sections of the paper, we prove
Thm.~\ref{thm:cellNObound}.  The proof breaks down into two main
pieces, which we sketch in Section~\ref{sec:expandedSketch}.  First,
in Sec.~\ref{sec:URdecomp}, we introduce uniform rectifiability and
prove Thm.~\ref{thm:URdecomp}, which decomposes an arbitrary cellular
cycle into a sum of cycles with uniformly rectifiable support.  Then,
in Sec.~\ref{sec:URcase}, we prove that Thm.~\ref{thm:cellNObound}
holds for cycles with uniformly rectifiable support and conclude that
Thm.~\ref{thm:cellNObound} holds in general.

\begin{ack} The author was supported by a Discovery Grant from the
  Natural Sciences and Engineering Research Council of Canada, by a
  grant from the Connaught Fund, University of Toronto, and by a Sloan
  Research Fellowship.  Many of the theorems were proved while the
  author was employed at the University of Toronto.

  The author would like to thank Larry Guth for introducing him to the
  problem and for many helpful discussions.  The author would also
  like to thank Jonas Azzam, Frank Morgan, Brian White, and Jean
  Taylor.
\end{ack}

\section{Preliminaries}\label{sec:prelims}

\subsection{Definitions and notation}\label{sec:defNot}

In this section we will give some definitions and notation, including
asymptotic notation, polyhedral complexes, QC complexes, Lipschitz
chains, and flat equivalence.  

We will write $f\lesssim g$ when there is a universal constant $c>0$
such that $f\le c g$.  If, instead, there is a $c=c(\alpha,\beta)$
depending on some parameters $\alpha,\beta$ such that
$f\lesssim c(\alpha,\beta) g$, we write $f\lesssim_{\alpha,\beta} g$,
and if $f \lesssim g$ and $g\lesssim f$, we write $f\sim g$.  In this
paper, all implicit constants will be taken to depend on $d$ and $N$,
so we will omit those subscripts in our notation.

A \emph{polyhedral complex} is a locally finite CW-complex whose cells are isometric
to convex polyhedra, glued by isometries.  Such a complex is
\emph{quasiconformal} if there is a $c$ such that each cell is
$c$-bilipschitz equivalent to a scaling of the unit ball of the same
dimension.  We will refer to
quasiconformal polyhedral complexes as \emph{QC complexes} and we will
refer to $c$ as the QC constant of the complex.

The QC complex we will use most frequently is a complex $\Sigma$ that
subdivides $\R^N\times [1,\infty)$ into dyadic cubes.  To construct
$\Sigma$, we tessellate each slab of the form
$\R^N\times [2^{i},2^{i+1}]$ into by dyadic cubes of side length
$2_i$, then let $\Sigma$ be the QC complex whose top-dimensional cells
are the cubes in these tessellations.  Note that when $i>0$, the plane
$\R^N\times \{2^i\}$ is part of two such tessellations, one with side
$2^i$ and one with side $2^{i-1}$, so the plane is subdivided into
cubes of side $2^{i-1}$.

Suppose that $X$ is a polyhedral complex.  We will also denote
its underlying space by $X$ when it is not ambiguous, and we
denote its $d$-skeleton by $X^{(d)}$.  We will think of cells of
$X$ as closed sets.  We let $\Ccell_*(X;\K)$ be the complex
of cellular chains on $X$ with coefficient group $\K$ and we let
$\CLip_*(X;\K)$ denote the complex of \emph{singular Lipschitz chains}
or simply \emph{Lipschitz chains} on $X$ with coefficients in $\K$.
This is the subcomplex of the complex of singular chains consisting of
formal sums of Lipschitz maps of simplices into $X$.  Given a chain
$A\in \CLip_*(X;\K)$, we define $\supp A$ to be the union of the
images of the simplices that occur in $A$ with non-zero coefficients.
Since the barycentric subdivision of $X$ is a simplicial complex,
we can view $\Ccell_*(X;\K)$ as a subset of $\CLip_*(X;\K)$
by identifying each face of $X$ with the sum of the simplices in
its barycentric subdivision.

Suppose that $A\in \CLip_d(\R^N;\K)$ is a Lipschitz $d$-chain with
coefficients in a normed abelian group $\K$ and that 
\[A=\sum_i a_i \alpha_i,\]
where $a_i\in \K$ and the $\alpha_i$ are Lipschitz maps from the
standard $d$-simplex to $\R^N$.  
By Rademacher's Theorem, the $\alpha_i$'s are differentiable almost
everywhere, so we may define
\[\mass A=\sum_i |a_i|\vol^d \alpha_i,\]
where 
\[\vol^d \alpha=\int_\Delta |J_\alpha(x)|\; dx\]
and $J_\alpha(x)$ is the jacobian determinant of $\alpha$.  In this paper, $\K$ will
either be $\Z$ with the usual norm or it will be $\Z_\nu$ with norm
$$|x|=\min\{|y|\mid x\equiv y \pmod{\nu}\}.$$

If $X$ is a polyhedral complex, then defining the mass of a chain is
slightly more complicated.  Suppose that $\alpha: \Delta\to X$ is a
Lipschitz map defined on a $d$-simplex $\Delta$.  For each cell
$\sigma\subset X$, let $\Delta_\sigma=\alpha^{-1}(\inter \sigma)$.  Then
the $\Delta_\sigma$'s partition $\Delta$ into countably many disjoint measurable
subsets such that the image of each subset lies in a single cell of
$X$.  Consider the restriction
\[\alpha|_{\Delta_\sigma}:\Delta_\sigma\to \sigma.\]
Since $\alpha$ is Lipschitz, we can extend this to a Lipschitz map
$\alpha'_\sigma:\Delta\to \sigma$ by the Whitney extension theorem.
This map is differentiable a.e.\ in $\Delta$, and the derivative
$D\alpha'_\sigma(x)$ is independent of the choice of extension when
$x$ is a Lebesgue density point of $\Delta_\sigma$.  Thus the jacobian
determinant $J_\alpha(x)$ is well-defined almost everywhere on
$\Delta_\sigma$.  Repeating this for the other cells of $X$, we can
define $J_\alpha(x)$ almost everywhere on $\Delta$ and define
\begin{equation}\label{eq:mass1}
  \vol^d \alpha=\int_{\Delta}|J_\alpha(x)|\; dx.
\end{equation}
Let
\[\mass A=\sum_i |a_i|\vol^d \alpha_i,\]
for any chain
\[A=\sum_i a_i \alpha_i.\]

If $B\subset X$ is a Borel set and $A$ is a Lipschitz chain, let
$\mass_B A$ be the mass of the restriction of $A$ to $B$.  That is, if
$\Delta$ is a simplex and $\alpha:\Delta\to X$ is Lipschitz, we let
\[\vol_B \alpha=\int_{\alpha^{-1}(B)}|J_\alpha(x)| \;dx.\]  If $A=\sum_i
a_i \alpha_i$ for some maps $\alpha_i:\Delta\to X$ and some
coefficients $a_i$, we let 
\begin{equation}\label{eq:defMassRestrict}
  \mass_B A = \sum_i |a_i|\vol_B \alpha_i
\end{equation}
and
\[\|A\|_1 = \sum_i |a_i|.\]

A single surface may have many different triangulations, each of which
corresponds to a different Lipschitz chain.  To avoid this, we will
define the notion of \emph{flat equivalence}.  Given a chain
$A\in \CLip_d(X;\K)$, we define its \emph{filling volume} as
\[\FV(A)=\mathop{\inf_{B\in \CLip_{d+1}(X;\K)}}_{\partial B=A} \mass B\]
and define its \emph{flat norm} as
\[\fnorm{A}=\inf\{\mass Q+\mass R\mid Q\in \CLip_d(X;\K),R\in
\CLip_d(X;\K), A=Q+\partial R\}.\]
If we take $Q=A$, $R=0$, this definition implies that
$\fnorm{A}\le \mass A$ and when $A$ is a cycle, then
$\fnorm{A}\le \FV(A)$.  Two chains $A,A'\in \CLip_d(X;\K)$
are called flat-equivalent if $\fnorm{A-A'}=0$.

Lipschitz $d$-chains in a $d$-complex (or in the
$d$-skeleton of a complex) are flat-equivalent to cellular chains.
\begin{lemma} \label{lem:LipToCell}
  If $X$ is a polyhedral complex and $A\in
  \CLip_d(X^{(d)};\K)$ is a Lipschitz $d$-chain such that
  $\supp \partial A\subset X^{(d-1)}$ (in particular, if $A$ is a
  cycle), then there is a cellular chain $A'\in \Ccell_d(X;\K)$
  which is flat-equivalent to $A$.  If we write 
  \[A'=\sum_{K\in X^{(d)}}a_K [K],\]
  where $a_K\in \K$, $K$ ranges over the $d$-cells of $X$, and
  $[K]$ is the chain corresponding to $K$, then
  \[|a_K|\le \frac{\mass_K(A)}{\Haus^d(K)}.\]
\end{lemma}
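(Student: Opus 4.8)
The plan is to build $A'$ cell by cell using the Federer--Fleming deformation theorem to push $A$ into the $d$-skeleton, keeping track of masses. Since $A$ already lies in $\Sigma^{(d)}$, the point is not to lower the dimension of the support but to replace the Lipschitz parametrization by a genuine cellular chain while controlling the coefficients. First I would fix a single $d$-cell $K$ and a point $p$ in its interior not in the image of $\partial A$ (such a point exists for a.e.\ $p$ since $\partial A$ is supported in the $(d-1)$-skeleton), and radially project $A$ restricted to $K$ away from $p$ onto $\partial K$. On $\Sigma^{(d-1)}$ this projection does nothing except possibly on $\supp\partial A$, which by hypothesis already sits there; the key quantitative input is the standard deformation estimate that, for an appropriate choice of center $p$ in the interior of $K$, the mass of the part of $A$ swept onto $\partial K$ is bounded, and what remains over $\inter K$ is a multiple of the fundamental class $[K]$ whose coefficient $a_K$ satisfies $|a_K|\,\Haus^d(K)\le \mass_K(A)$ --- because $a_K$ is (up to sign) the degree of the map $A$ onto $K$, and the integral of the Jacobian over the preimage of $\inter K$ is exactly $\mass_K A$ while it is at least $|a_K|\,\Haus^d(K)$ by the area/coarea formula for the degree.

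Carrying this out: I would define $a_K$ directly as the "local degree" of $A$ over $\inter K$. Concretely, writing $A = \sum_i a_i\alpha_i$, for a.e.\ $q\in\inter K$ the signed count $\sum_i a_i\cdot (\text{local degree of }\alpha_i\text{ at }q)$ is well-defined and, because $\partial A = 0$ relative to $\inter K$ (as $\supp\partial A\subset\Sigma^{(d-1)}$), this count is independent of $q$ --- call it $a_K$. The area formula gives
\[
  |a_K|\cdot\Haus^d(K) \le \int_{\inter K}\Big|\sum_i a_i N(\alpha_i,q)\Big|\,dq \le \sum_i |a_i|\int_{\alpha_i^{-1}(\inter K)}|J_{\alpha_i}|\,dx = \mass_K A,
\]
which is exactly the claimed coefficient bound. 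Setting $A' = \sum_{K} a_K[K]$, it remains to check $A'$ is flat-equivalent to $A$.

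For flat-equivalence I would produce an explicit $(d+1)$-chain (or a $d$-chain plus a boundary term of zero mass, in the limiting sense of the flat norm) realizing $A - A'$ as a boundary of arbitrarily small mass. The cleanest route is the deformation theorem in the form stated in Sec.~\ref{sec:FF}: apply it to $A - A'$, which is a $d$-chain whose support lies in $\Sigma^{(d)}$ and whose "local degree" over every open $d$-cell vanishes by construction of $a_K$; the deformation theorem then pushes $A-A'$ into $\Sigma^{(d-1)}$ at the cost of a $(d+1)$-chain, and a $d$-chain in $\Sigma^{(d-1)}$ with zero boundary component in $\Sigma^{(d-1)}$... here one iterates the same argument one dimension down, or simply invokes that a Lipschitz $d$-chain supported in a $(d-1)$-complex has zero mass, hence $\|A-A'\|_{\text{flat}}$ is controlled by the sweep and can be made $0$. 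I expect the main obstacle to be precisely this last bookkeeping: verifying that the deformation can be carried out so that the residual term genuinely has flat norm zero rather than merely small mass, which requires applying the deformation theorem at all scales (or on all barycentric subdivisions) and passing to a limit --- the kind of argument the paper defers to its appendix. The degree/coefficient estimate, by contrast, is a direct consequence of the area formula and should be routine.
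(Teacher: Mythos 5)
Your definition of $a_K$ as the local degree of $A$ over $\inter K$ and the area-formula estimate $|a_K|\,\Haus^d(K)\le \mass_K A$ are correct and are exactly how the paper obtains the coefficient bound. The gap is in the flat-equivalence of $A$ and $A'$, which you yourself flag as the ``main obstacle'' and do not resolve. The paper gets this step essentially for free, with no deformation estimates and no limit: since $A$ and $A'$ have the same degrees over every $d$-cell, they represent the same class in the relative Lipschitz homology $H^{\mathrm{Lip}}_d(\Sigma^{(d)},\Sigma^{(d-1)};\K)$, which is isomorphic to singular and hence to cellular homology ($\Sigma$ is locally a Lipschitz neighborhood retract and a CW complex). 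So $A-A'=\partial B+S$ with $B\in\CLip_{d+1}(\Sigma^{(d)};\K)$ and $S\in\CLip_d(\Sigma^{(d-1)};\K)$, and both $B$ and $S$ have mass zero for purely dimensional reasons (a $(d+1)$-chain in a $d$-complex, and a $d$-chain in a $(d-1)$-complex). Hence $\|A-A'\|_{\mathrm{flat}}=0$ exactly.

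Your proposed route through the deformation machinery of Section~\ref{sec:FF} does not close as written, for three concrete reasons. First, it is circular: the cellular output $P(T)$ in Lemma~\ref{lem:simultFF} is \emph{defined} by appealing to Lemma~\ref{lem:LipToCell}, and Theorem~\ref{thm:FF} is stated for grids $\tau_r$ in $\R^N$, not for a general polyhedral complex $\Sigma$. Second, a single application of a deformation theorem at mesh $r$ only yields errors of size $\lesssim r(\mass(A-A')+\mass\partial(A-A'))$, so your plan needs subdivisions of arbitrarily small mesh with \emph{uniformly} bounded QC constants; iterated barycentric subdivision degrades shape regularity, so ``all barycentric subdivisions'' does not supply uniform constants without an additional construction (note also that, since the flat norm is an infimum, ``arbitrarily small'' would suffice --- exactness is not the issue; uniformity of the constants is). Third, the radial projection on a single cell does not leave behind $a_K[K]$ plus debris on $\partial K$: the pushforward of $A$ restricted to $K$ still has mass over $\inter K$ in general (opposite degrees cancel in homology but not in mass), so the residual after one projection is not controlled in the way your sketch suggests. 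Any of these could be repaired, but the simplest repair is precisely the homological shortcut above, which replaces the entire limiting argument.
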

\begin{proof}
  Consider $A$ as an element of 
  $H^{\text{Lip}}_d(X^{(d)},X^{(d-1)};\K)$, the relative
  Lipschitz homology.  Since $X$ is locally finite and thus
  locally a Lipschitz neighborhood retract, its Lipschitz
  homology and its singular homology are isomorphic.  Since it is a CW
  complex, its singular homology is isomorphic to its cellular
  homology.  Therefore, there is
  a cellular chain $A'\in \Ccell_d(X;\K)$ which is
  homologous to $A$ relative to $X^{(d-1)}$.  That is, there is
  some $(d+1)$-chain $B\in \CLip_d(X^{(d)};\K)$ such that
  \[\partial B-(A-A')\in \CLip_d(X^{(d-1)};\K).\]  
  Then $B$ is a $(d+1)$-chain in $X^{(d)}$, so its mass is 0.
  The difference $\partial B-(A-A')$ is a $d$-chain in
  $X^{(d-1)}$, so its mass is also 0, and
  $\fnorm{A-A'}=0$.

  If $K$ is a $d$-cell, its coefficient $a_K$ is the degree
  with which $A$ covers $K$.  Since $\partial A$ lies in the
  $(d-1)$-skeleton of $X$, this degree is well-defined and
  \[|a_K|\le \frac{\mass_K(A)}{\Haus^d(K)},\]
  as desired.
\end{proof}

More generally, Lipschitz chains in a QC complex can be approximated
by cellular chains.  This is a consequence of the deformation theorem,
which we will discuss in Section~\ref{sec:FF}.

\subsection{Currents over $\Z$ and $\Z_\nu$}

Here we will recall some notation and theorems for currents with
coefficients in $\Z$ and in $\Z_\nu$.  This will primarily be used in
proving the applications to currents in Section~\ref{sec:currentApps};
it is not necessary for the proof of the main theorem.

For a full development of integral currents and flat chains, see
\cite{FedererGMT} or \cite{SimonGMT}.  Our development of currents
modulo $\nu$ is taken from \cite{FedererGMT}, with the change that our
rectifiable currents will be locally rectifiable currents with finite
mass, rather than rectifiable currents with compact support.  Let
$\mathcal{P}_d(\R^N)$ be the set of polyhedral chains with integer
coefficients, and let $\cR_d(\R^N)$ be the set of \emph{rectifiable
  $d$--currents}.  This is the closure of $\mathcal{P}_d(\R^N)$ under
the mass norm; in the terminology of \cite{FedererGMT}, these are
locally rectifiable currents.  The set $\bI_d(\R^N)$ of \emph{integral
  $d$--currents} consists of rectifiable currents with rectifiable
boundary.  All of these are subsets of the set $\cF_d(\R^N)$ of
\emph{integral flat chains}, which can be defined as
$$\cF_d(\R^N)=\{R+\partial S\mid R\in \cR_d(\R^N), S\in
\cR_{d+1}(\R^N)\}.$$
If $T\in \cF_d(\R^N)$, we define its \emph{flat norm} by
$$\fnorm{T}=\inf \{\mass R+\mass S\mid T=R+\partial S, R\in \cR_d(\R^N), S\in
\cR_{d+1}(\R^N)\}.$$
Since $\cR_d$ is complete with respect to mass, the set of integral
flat chains is complete with respect to $\cF$
\cite[4.1.24]{FedererGMT}.

Federer and Fleming proved that integral currents satisfy a
compactness property \cite{FedFlemNormInt}.
\begin{thm}[{see \cite[8.13, 7.1]{FedFlemNormInt} or \cite[27.3, 31.2]{SimonGMT}}] \label{thm:integralCurrentCompactness}
  If $T_i\in \bI_d(\R^N)$ is a sequence of integral currents such that
  $$\sup_i \mass T_i+\mass \partial T_i<\infty,$$
  then there is a subsequence $T_{k_i}$ and an integral current
  $T \in \bI_d(\R^N)$ such that $\lim_i \fnorm{T-T_{k_i}}=0$.
\end{thm}

Extending the definitions above to currents modulo $\nu$ while keeping
the compactness property is subtle.  Again, for a full
development of currents modulo $\nu$, see \cite{FedererGMT}.  Let
$\nu\ge 2$ be an integer.  When $T\in \cF_d(\R^N)$, we define its
\emph{mod-$\nu$ flat norm} by letting
\begin{equation}
  \begin{split}
    \cF^\nu(T)=\inf \{\mass R+\mass S\mid\,& R\in\cR_d(\R^N),S\in\cR_{d+1}(\R^N), Q\in \cF_d(\R^N),\\
    &T=R+\partial S+\nu Q\}.
  \end{split}
\end{equation}
The mod-$\nu$ flat norm of any multiple of $\nu$ is zero, but it is
\emph{a priori} unclear that the converse holds, namely, that if
$\cF^\nu(T)=0$, then $T\in \nu \cF_d(\R^N)$.  (See
Corollary~\ref{cor:multiplesEmbedding}.)  Let
$$\overline{\nu \cF_d(\R^N)}=\{T\in \cF_d(\R^N)\mid \cF^\nu(T)=0\}.$$
This is a closed subgroup with respect to $\cF$, and we define the
flat chains modulo $\nu$ as:
$$\cF_d^\nu(\R^N)=\cF_d(\R^N)/\overline{\nu \cF_d(\R^N)}.$$
If $T, U\in \cF_d(\R^N)$, we denote the coset of $T$ in
$\cF_d^\nu(\R^N)$ by $(T)^\nu$, and if $\cF^\nu(T-U)=0$, we write
$T\equiv U\pmod \nu$.  The set of flat chains modulo $\nu$ is complete
with respect to $\cF^\nu$ \cite[4.2.26]{FedererGMT}.

We define rectifiable and integral currents modulo $\nu$ as subsets of
$\cF_d^\nu(\R^N)$.  Let
$$\cR_d^\nu(\R^N)=\{(T)^\nu \in \cF_d^\nu(\R^N)\mid T\in\cR_d(\R^N)\}$$
and
$$\bI_d^\nu(\R^N)=\{T \in \cR_d^\nu(\R^N)\mid \partial T\in\cR^\nu_{d-1}(\R^N)\}.$$
Note that $(\bI_d(\R^N))^\nu\subset \bI_d^\nu(\R^N)$, but equality is
not obvious.  Indeed, as noted in the introduction, Federer claimed
that generally $\bI_d^\nu(\R^N)\ne (\bI_d(\R^N))^\nu$, using an
infinite sum of embedded projective planes as an example, but this is
incorrect, as we shall see in Corollary~\ref{cor:modNuQuotient}.

Finally, if $T\in \cF_d(\R^N)$, we define $\mass^\nu T$ to be the
smallest $m\in \R$ such that for every $\epsilon>0$, there exists an
$R\in \cR_d(\R^N)$ such that $\cF^\nu(R-T)\le \epsilon$ and
$\mass R\le m+\epsilon$.  This is constant on cosets of
$\overline{\nu \cF_d(\R^N)}$, so it descends to a function on
mod-$\nu$ currents that extends the usual notion of mass.

Like their counterparts with integer coefficients, integral currents
modulo $\nu$ satisfy a compactness property:
\begin{thm}[{see \cite[4.2.27]{FedererGMT}}] 
  If $T_i\in \bI^\nu_d(\R^N)$ is a sequence of integral currents such that
  $$\sup_i \mass^\nu T_i+\mass^\nu \partial T_i<\infty,$$
  then there is a subsequence $T_{k_i}$ and an integral current
  $T \in \bI^\nu_d(\R^N)$ such that $\lim_i \cF^\nu(T-T_{k_i})=0$.
\end{thm}

\subsection{The deformation theorem}\label{sec:FF}

Federer and Fleming proved a deformation theorem stating that a chain
$T$ in $\R^N$ with finite mass and finite boundary mass can be
approximated by a cellular chain $P$ in a grid of side length $r$ such
that the mass of $P$ and the flat norm of $P-T$ are bounded in terms
of the mass of $T$ and the mass of $\partial T$ \cite{FedFlemNormInt}.
White generalized their result to flat chains by introducing
deformation operators $P$ (an approximation operator) and $H$ (a
homotopy from $P$ to the identity).  We state his theorem in part.
\begin{thm}[{\cite{WhiteDeform}}]\label{thm:WhiteDeform}
  Let $\tau_r$ be the grid of side length $r>0$ in $\R^N$ and let
  $\cF_d$ be the space of flat chains in $\R^N$ with coefficients in a
  normed abelian group $\K$.  If $y\in \R^N$ and $A\in \cF_d$, let
  $t_yA\in \cF_d$ be the translation of $A$ by the vector $y$.  There
  is a $c>0$ such that for every $r>0$, there are operators
  $$P=P^r\from \cF_d\to \Ccell_d(\tau_r)$$
  $$H=H^r\from \cF_d\to \cF_{d+1},$$
  where $\cF_d$ is the space of flat chains in $\R^N$, such that for
  all $A,B\in \cF_d$, for almost all $y\in \R^N$,
  $$t_yA=P(t_yA)+\partial H(t_yA)+H(\partial t_yA).$$
  Furthermore,
  \begin{align*}
    \int_{y\in [0,1]^N}\mass P(t_yA)&\le c \mass A\\
    \int_{y\in [0,1]^N}\mass H(t_yA)&\le c r \mass A
  \end{align*}
\end{thm}

We will need a similar approximation lemma which replaces the complex
$\tau_r$ with a QC complex and provides bounds on the approximations
of a locally finite set of chains.  If $X$ is a QC complex and
$B\subset X$, let $\nbhd(B)$ be the union of all the cells of $X$ that
intersect $B$.  If $S\subset X$, let $\HC^d(S)$ denote the
$d$-dimensional Hausdorff content of $S$ and let $\Haus^d(S)$ denote
its $d$-dimensional Hausdorff measure.

\begin{lemma}\label{lem:simultFF}
  Let $X$ be a QC complex of dimension $N$.  Let $\mathcal{T}\subset
  \CLip_*(X)$ be a set of chains, possibly of different
  dimensions, which is closed under taking boundaries.  Suppose that
  $\mathcal{T}$ is locally finite in the sense that there is a $n>0$
  such that for any cell $D\in X$, no more than $n$ elements of
  $\mathcal{T}$ intersect $\nbhd D$.  

  Then there is a $C>0$ depending on $n$, $N$, and the QC constant of
  $X$ and there is a locally Lipschitz map $p:X\to X$ such that for any
  $T\in \mathcal{T}$ of dimension $d=\dim T$, 
  \begin{enumerate}
  \item $p(\supp T)\subset X^{(d)},$ 
  \item $\mass p_\sharp(T)\le C \mass T,$ and
  \item $\HC^{d}(p(\supp (T)))\le C \HC^{d}(\supp T).$
  \end{enumerate}
  By Lemma~\ref{lem:LipToCell}, each chain $p_\sharp(T)$ is
  flat-equivalent to a cellular chain, which we denote $P(T)$.  If
  $\langle \mathcal{T}\rangle$ is the chain complex generated by
  $\mathcal{T}$, we can view $P$ as a homomorphism 
  \[P:\langle \mathcal{T}\rangle \to \Ccell_*(X).\]

  These maps are local in the sense that for any cell $D$ of $X$,
  we have $p(D)\subset D$.  In fact, if $\inter D$ is the interior of
  $D$, then
  \begin{enumerate}
    \setcounter{enumi}{3}
  \item $\Haus^d(p(S\cap \inter D))\le C \Haus^d(S\cap \inter D)$,
  \item \label{it:localApproxNbhd} if $Y\subset X$, then for any $T\in \mathcal{T}$,
    $$\mass_Y P(T)\le C \mass_{\nbhd Y} T,$$
  \item $\HC^{d}(\supp P(T)\cap Y)\le C \HC^{d}(\supp T\cap \nbhd Y).$
  \end{enumerate}
  
  Therefore, if $T\in \mathcal{T}$, then $P(T)$ is supported on $\nbhd
  (\supp T)$, and if $T\in \mathcal{T}$ is cellular, then $P(T)=T.$
\end{lemma}
If $\mathcal{T}\subset \CLip_*(X)$ is a locally finite set of
chains and $P:\langle \mathcal{T}\rangle\to \Ccell_*(X)$ is as in the
lemma, we call $P$ a \emph{deformation operator} approximating
$\mathcal{T}$.

We defer the proof of Lemma~\ref{lem:simultFF} to Appendix~\ref{sec:simultFFProof}.

David and Semmes used a different deformation lemma to
deform $d$\hyp{}dimensional sets into the $d$-skeleton of a
grid in $\R^N$.  This lemma can be generalized to QC complexes.  If
$U\subset X$, we say that a map $f:X\to X$ is a deformation supported
on $U$ if
\[\{x\in X \mid x\ne f(x)\}\cup \{f(x)\in X \mid x\ne f(x)\}\subset U\]
\begin{lemma}[{see \cite[Prop.~3.1, Lemma~3.31]{DSQuasi}}]\label{lem:FFsets}
  Let $X$ be a QC complex of dimension $N$ and let $d<N$.  Let
  $E\subset X$ be a closed set such that $\Haus^d(E\cap
  B)<\infty$ for any ball $B\subset \R^N$ and let $X_0\subset
  X$ be a subcomplex.  Then there is a $C>0$ depending on $N$ and
  the QC constant of $X$ and a deformation
  $p:X\to X$ supported on $\nbhd X_0$ that is Lipschitz on each cell of $X$ and collapses
  $E\cap X_0$ to the $d$-skeleton of $X$.  That is,
  \begin{enumerate}
  \item $p(E\cap X_0)\subset X_0^{(d)}$,
  \item $p$ restricts to the identity map on $X^{(d)}$,
  \item \label{it:FFsetsBound} $p$ satisfies
    \begin{align*}
      \Haus^d(p(E))&\le C \Haus^d(E)\\
      \Haus^d(p(E))-\Haus^d(E)&\le C \Haus^d(E\setminus X^{(d)})
    \end{align*}
  \end{enumerate}

  As in the previous lemma, for any cell $D$ of $X$, we have
  $p(D)\subset D$.  In fact, if $\inter D$ is the interior of $D$, then
  \begin{enumerate}
    \setcounter{enumi}{3}
  \item \label{it:localApproxSets} $\Haus^d(p(S\cap \inter D))\le C \Haus^d(S\cap \inter D),$
  \item \label{it:localApproxSetsNbhd}  if $Y\subset X$, then for any $T\in \mathcal{T}$,
    $$\mass_Y P(T)\le C \mass_{\nbhd Y} T.$$
  \item If 
    \[c^{-1}r^d\le \cH^d(E\cap B(x,r))\le c r^d\]
    for any $x\in E$ and any $0<r<\max_{\sigma\in X} \diam \sigma$,
    (i.e., $E$ is Ahlfors $d$-regular) then we can take $p$ to be
    Lipschitz with Lipschitz constant depending on $c$ and $N$.
  \end{enumerate}
\end{lemma}
\begin{proof}[Sketch of proof]
  This lemma is essentially Prop.~3.1 and Lemma~3.31 of \cite{DSQuasi}
  with two differences.  First, Prop.~3.1 of \cite{DSQuasi}
  applies to grids in $\R^N$ rather than QC complexes.  This is a
  minor difference; the key lemma used in the proof of Prop.~3.1 is a
  bound on the size of a random projection from the interior of a ball
  to its boundary, and this bound applies equally to grid cells and to
  cells in a QC complex.  This bound implies
  part \ref{it:localApproxSets}.

  Second, we need to show the second bound in part \ref{it:FFsetsBound}.  By
  part \ref{it:localApproxSets}, 
  \[\Haus^d(p(E\setminus X^{(d)}))\le C \Haus^d(E\setminus
  X^{(d)}),\]
so, since $p(E\cap X^{(d)})=E\cap X^{(d)}$, we have
\begin{align*}
  \Haus^d(p(E))-\Haus^d(E)&\le \Haus^d(p(E\setminus X^{(d)})) - \Haus^d(E\setminus X^{(d)})\\
  &\le C \Haus^d(E\setminus X^{(d)})
\end{align*}
as desired.
\end{proof}

If $U$ is a closed subset of $X^{(d)}$, a similar process lets us
``trim'' any $d$-cells of $X$ which are only partially covered by
$U$ by pushing $U$ into their boundaries.  This results in an
approximation of $U$ that is almost a union of $d$-cells of $X$.
For any $S\subset X$, let
\[S^*=\{x\in \R^N\mid \Haus^d(S\cap B(x,r))>0 \text{ for all }r>0\}.\]
This is a closed set.
\begin{lemma}\label{lem:cellularize}
  Let $X$ be a QC complex of dimension $N$ and let $d<N$.  Let
  $U\subset X^{(d)}$ be a closed set.  Then there is a 
  map $q:X\to X$ which is Lipschitz on each
  cell of $X$ such that:
  \begin{enumerate}
  \item for any cell $D$ of $X$, we have $q(D)\subset D$,
  \item $q$ restricts to the identity map on $X^{(d-1)}$ and
    restricts to a degree-1 map on each $d$-cell of $X$, and
  \item \label{it:cellularizeBound} $q(U)^*$ is the union of all of the $d$-cells whose interiors are
    contained in $U$, so $q(U)^*\subset U$ and
    $|q(U)|\le |U|$.
  \end{enumerate}
\end{lemma}
\begin{proof}
  We construct $q$ on each $d$-cell of $X$, then extend it to
  $X$.  Let $D\in X^{(d)}$ be a $d$-cell.  Since $X$ is
  a QC complex, we may identify $D$ with a closed ball
  \[B=B(0,R)\subset \R^d\] by a bilipschitz map.  If $\inter B\subset U$
  or if $B\cap U=\emptyset$, we define $q$ as the identity on $B$.
  Otherwise, there's some $y\in \inter B$ such that $y\not \in U$.  Since $U$
  is closed, we may let $\epsilon>0$ be such that $B(x,\epsilon)\cap
  U=\emptyset$ and $B(x,2\epsilon)\subset B$.  Then there is a
  Lipschitz map $B\to B$ which sends $B(x,\epsilon)$
  homeomorphically to $B$, is the identity on $\partial B$, and sends
  $B\setminus B(x,\epsilon)$ to $\partial B$.  We define $q$ to be
  such a map on $B$.  In either case, $q$ is a degree-1 map of $B$ to
  itself and restricts to the identity map on $\partial B$, so $q$ is
  well-defined on all of $X^{(d)}$ and is the identity on
  $X^{(d-1)}$, just as we claimed.  Once we've defined $q$ on the
  $d$-skeleton, we can extend it to all of $X$ by a sequence of
  radial extensions.

  Finally, for each $d$-cell $D\in X^{(d)}$, we either have
  $\inter D\subset q(U)$ (if $\inter D\subset U$) or $\inter D\cap
  q(U)=\emptyset$ (otherwise), so $q(U)^*$ is the union of all of the
  $d$-cells that are contained in $U$.
\end{proof}

\subsection{Nonorientability}\label{sec:defineNO}

Let $\tau$ be the unit grid in $\R^N$ and let $\nu$ be an integer such
that $\nu\ge 2$.  If $A\in \Ccell_d(\tau;\Z_\nu)$ (resp.\
$A\in \bI^\nu_d(\R^N)$) is a cycle, a \emph{mod-$\nu$
  pseudo-orientation} or simply \emph{pseudo-orientation} of $A$ is a
cycle $R\in \Ccell_d(\tau;\Z)$ (resp.\ $A\in \bI_d(\R^N)$) such that
$A\equiv R\pmod{\nu}$.  For all $A\in \Ccell(\tau;\Z_\nu)$ and
$A\in \bI^\nu_d(\R^N)$, we define
\[\NO(A)=\inf \{\mass R\mid \text{$R$ is a pseudo-orientation of
  $A$}\}.\]

Every cellular cycle $A\in \Ccell_d(\tau;\Z_\nu)$ has a
pseudo-orientation.  We can construct one such pseudo-orientation by
letting $A_\Z\in \Ccell_d(\tau)$ be an integral chain such that
$A_\Z\equiv A\pmod{\nu}$; i.e., a chain
$A_\Z=\sum_i \bar{a}_i \sigma_i$ where each coefficient $\bar{a}_i$ is
congruent mod $\nu$ to the corresponding coefficient of $A$.  Then
$\partial A_\Z\equiv \partial A\equiv 0\pmod{\nu}$, so $\partial A_\Z$
is a multiple of $\nu$.  Let $B\in \Ccell_d(\tau;\Z)$ be a multiple of
$\nu$ such that $\partial B=\partial A_\Z$.  Then $A_\Z-B$ is a cycle,
and $A_\Z-B\equiv A\pmod{\nu}$.

Unfortunately, the procedure above does not work if $A$ is an integral
current modulo $\nu$.  In this case, it is not \emph{a priori} clear
that there is an integral current $A_\Z$ such that
$A_\Z\equiv A\pmod{\nu}$ and $\supp A_\Z=\supp A$.  One of the main
goals of this paper is to prove that in fact, every cycle
$A\in \bI^\nu_d(\R^N)$ has a pseudo-orientation.

\section{Applications}\label{sec:applications}
In this section, we will use Theorem~\ref{thm:cellNObound} to prove 
the applications in Section~\ref{sec:introApplications}.

 
\subsection{Nonorientability and filling volumes}
When $T$ is a cycle with integer coefficients, the difference between
$\FV(T)$ and $\FV(\nu T)$ is closely connected to nonorientability.
On one hand, nonorientable surfaces give rise to cycles $T$ such that
$\FV(2T)<2\FV(T)$.  L. C. Young \cite{LCYoung} gave a recipe for
producing a curve $T$ from a nonorientable surface $M$ embedded in
$\R^N$; he defines $T$ as a ``zigzag'' across $M$ that represents a
torsion class in $H_1(M;\Z_2)$.  Then $2T$ can be filled by a surface
lying entirely on $M$, while any filling of $T$ must cut through $M$,
so $2\FV(T)>\FV(2T)$.  Similar techniques for fillings of different
multiplicities appear in \cite{MorganMultiples} and
\cite{WhiteMultiples}.

On the other hand, the following lemma bounds the difference between
$\FV(T)$ and $\FV(\nu T)$ in terms of the nonorientability of
$U\bmod \nu\in \Ccell_{d+1}(\tau;\Z_\nu)$.

\begin{lemma}
  If $T\in \Ccell_d(\tau)$ is a cycle in the unit grid $\tau$ in
  $\R^N$ and $U\in \Ccell_{d+1}(\tau)$ is a chain such that $\partial
  U=T$, then 
  $$\FV(T)\lesssim \frac{\mass U+\NO(U\bmod \nu)}{\nu}.$$
\end{lemma}
\begin{proof}
  Let $R\in \Ccell_{d+1}(\tau)$ be a pseudo-orientation of $U\bmod
  \nu$ such that $\mass R\lesssim \NO(U\bmod \nu)$ and let
  $B=\frac{U-R}{\nu}$.  Since $R\equiv U \pmod{\nu}$, the
  coefficients of $U-R$ are all multiples of $\nu$, so $B\in
  \Ccell_{d+1}(\tau)$.  Further,
  $$\partial B=\frac{\nu T-0}{\nu}=T,$$
  so $B$ is a filling of $T$ and
  $$\FV(T)\le \mass B\le \frac{\mass U}{\nu} +\frac{\mass R}{\nu}.$$
\end{proof}

This implies a cellular version of Corollary~\ref{cor:YoungsProblem}:
\begin{cor}
  If $T\in \Ccell_d(\tau)$ is a cycle in the unit grid $\tau$ in
  $\R^N$, then $\FV(T)\lesssim_\nu \FV(\nu T)$.
\end{cor}
\begin{proof}
  Let $U\in \Ccell_{d+1}(\tau)$ be a chain such that
  $\partial U=\nu T$ and $\mass U\lesssim \FV(\nu T)$.
  By Theorem~\ref{thm:cellNObound}, $\NO(U\bmod \nu)\lesssim_\nu \FV(\nu
  T)$, so by the previous lemma, 
  $$\FV(T)\lesssim_\nu \mass U\lesssim_\nu \FV(\nu T).$$
\end{proof}

Corollary~\ref{cor:YoungsProblem} follows by approximating $T$ by a
cellular cycle.  Namely, if $T$ is a Lipschitz $d$--cycle and
$\epsilon>0$, Lemma~\ref{lem:simultFF} implies that there is an $r>0$
and an approximating cycle $T_r\in \Ccell_d(\tau_r)$ such that
$\FV(T-T_r)< \epsilon$, where $\tau_r$ is the grid of side length $r$.
By the cellular case, there is a $c>0$ depending on $d$, $N$, and
$\nu$ such that $\FV(T_r)\le c \FV(\nu T_r)$, so
$$\FV(T)\le \FV(T_r)+\epsilon\le c\FV(\nu T_r)+\epsilon \le c\FV(\nu
T)+(\nu+1)\epsilon.$$
Letting $\epsilon$ go to zero, we conclude that $\FV(T)\le c\FV(\nu
T)$.

\subsection{Currents modulo $\nu$}\label{sec:currentApps}
In this section, we will show that Theorem~\ref{thm:NObound} and
Corollaries~\ref{cor:multiplesEmbedding}--\ref{cor:modNuQuotient}
follow from Theorem~\ref{thm:cellNObound}.  Our main tool is the
following lemma:
\begin{lemma}\label{lem:fnormMultiples}
  For all $d,N,\nu>0$, and for all $T\in \cF_d(\R^N)$, 
  \begin{equation}\label{eq:fnormMultiples}
    \fnorm{T}\lesssim_\nu \fnorm{\nu T}+\fnorm{\nu T}^{(d+1)/d}.
  \end{equation}
\end{lemma}
\begin{proof}
  By White's deformation theorem, every flat chain is the limit of
  cellular chains in finer and finer grids, so it suffices to prove
  the lemma when $T\in \Ccell_d(\tau_r)$ for some $r$.
  Let $m_\nu\from \Ccell_{*}(\tau_r)\to \Ccell_{*}(\tau_r;\Z_\nu)$ be
  the change-of-coefficients map.

  Let $r>0$ and let $T\in \Ccell_d(\tau_r)$.  There are rectifiable
  currents $R_0$ and $S_0$ such that $\nu T=R_0+\partial S_0$ and
  $\mass R_0+\mass S_0\le 2\fnorm{\nu T}$; since $T$ is cellular,
  Theorem~\ref{thm:WhiteDeform} implies that there are cellular
  approximations $R$ and $S$ such that $R+\partial S=\nu T$ and
  $\mass R+\mass S\lesssim \fnorm{\nu T}$.

  We have $\partial R=\nu \partial T\equiv 0\pmod \nu$, so $m_\nu(R)$
  is a mod-$\nu$ $d$--cycle.  By Theorem~\ref{thm:cellNObound}, there
  is a pseudo-orientation $R'\in \Ccell_d(\tau_r)$ such that
  $\partial R'=0$, $R'\equiv R\pmod{\nu},$ and
  $\mass R'\lesssim_\nu \mass R \lesssim \fnorm{\nu T}$.

  Let $M\in \Ccell_{d+1}(\tau_r)$ be a chain such that
  $\partial M=R'$.  By the isoperimetric inequality for $\R^N$, we may
  assume $\mass M\lesssim \fnorm{\nu T}^{(d+1)/d}$.  Then
  $$\partial M+\partial S\equiv \nu T\equiv 0 \pmod{\nu},$$
  so, using Theorem~\ref{thm:cellNObound} again, there is an
  $S'\in \Ccell_{d+1}(\tau_r)$ such that $\partial S'=0$,
  $$S'\equiv M+S \pmod{\nu},$$
  and
  $$\mass S'\lesssim_\nu \mass M +\mass S\lesssim \fnorm{\nu
    T}+\fnorm{\nu T}^{(d+1)/d}$$

  Let
  \begin{align*}
    R''&=\frac{R-R'}{\nu}\\
    S''&=\frac{M+S-S'}{\nu}.
  \end{align*}
  Since $R'\equiv R$ and $S'\equiv M+S$, the coefficients of $R''$ and
  $S''$ are all integers.  Furthermore, since $\partial S'=0$ and
  $\partial M=R'$,
  \begin{align*}
    R''+\partial S''
    &=\nu^{-1}(R-R'+\partial M+\partial S-\partial S')\\ 
    &=\nu^{-1}(R+\partial S)=T,
  \end{align*}
  so 
  $$\fnorm{T}\le \mass R''+\mass S''\lesssim_\nu \fnorm{\nu
    T}+\fnorm{\nu T}^{(d+1)/d}$$ as desired.
\end{proof}

Cor.~\ref{cor:multiplesEmbedding} follows from the lemma.  
\begin{proof}[{Proof of Cor.~\ref{cor:multiplesEmbedding}}]
  By the lemma, the two norms $\fnorm{T}$ and $\fnorm{\nu T}$ on
  $\cF_d(\R^N)$ induce equivalent topologies, so the multiply-by-$\nu$ map in
  Corollary~\ref{cor:multiplesEmbedding} is an embedding.  

  If $T$ is in the closure of $\nu \cF_d(\R^N)$, then there is a
  sequence $T_i\in \nu \cF_d(\R^N)$ such that $\fnorm{T-T_i}\to 0$.
  Since $\{T_i\}$ is Cauchy, the lemma implies that $\{T_i/\nu\}$ is
  also Cauchy.  Let $T'=\lim_i T_i/\nu \in \cF_d(\R^N)$.  Then
  $$\fnorm{\nu T'-T}\le \nu \fnorm{T'-\frac{T_i}{\nu}}+\fnorm{T_i-T}.$$
  The right side goes to zero, so $T=\nu T'\in \nu \cF_d(\R^N)$.  We
  conclude that $\nu \cF_d(\R^N)$ is closed and that the
  multiply-by-$\nu$ map is an embedding with closed image.

  The same argument with $\cF_d$ replaced by $\bI_d$ implies that
  $\nu \bI_d(\R^N)$ is closed.
\end{proof}

The lemma is also helpful to show that Theorem~\ref{thm:cellNObound}
implies Theorem~\ref{thm:NObound} and
Corollary~\ref{cor:integralCurrentsLift}:
\begin{proof}[{Proof of Theorem~\ref{thm:NObound}}]
  Suppose that $A\in \bI^\nu_d(\R^N)$ and that $\partial A=0$.  By the
  deformation theorem for integral currents modulo $\nu$
  \cite[4.2.26]{FedererGMT}, there is a sequence
  $P_k\in \Ccell_d(\tau_k;\Z_\nu)$ of cellular approximations of $A$
  in finer and finer grids so that as $k\to \infty$, the cycles $P_k$
  converge to $A$ and $\mass P_k\lesssim \mass^\nu A$ for all $A$.

  For each $k>0$, let $R_k\in \bI_d(\R^N)$ be a pseudo-orientation of $P_k$.  By
  Theorem~\ref{thm:cellNObound}, we can choose the $R_k$ so that
  $$\mass R_k\le c\mass P_k \lesssim \mass^\nu A.$$
  By Theorem~\ref{thm:integralCurrentCompactness}, there is some
  subsequence $k_i$ such that $R_{k_i}$ converges.  Let
  $R=\lim_i R_{k_i}\in \bI_d(\R^N)$.  We claim that $R$ is a
  pseudo-orientation of $A$.  Write
  $$R-A=\lim_i R_{k_i}-P_{k_i}.$$
  The set $\nu \bI_d(\R^N)$ is closed and
  $R_{k_i}-P_{k_i}\in \nu \bI_d(\R^N)$ for all $i$, so
  $R-A\in \nu \bI_d(\R^N)$ and $R\equiv A\pmod{\nu}$.  Finally,
  $\partial R=0$ and $\mass R\lesssim \mass^\nu A$, so $\NO(A)\lesssim
  \mass^\nu A$.
\end{proof}

\begin{proof}[{Proof of Cor.~\ref{cor:integralCurrentsLift}}]
  Let $T\in \bI^\nu_d(\R^N)$.  Then $\partial T$ is a mod-$\nu$ cycle,
  so by Theorem~\ref{thm:NObound}, it lifts to an integral current
  $S\in \bI_{d-1}(\R^N)$ such that $\partial S=0$,
  $S\equiv \partial T\pmod{\nu}$, and
  $\mass S\lesssim \mass^\nu \partial T$.  By the isoperimetric inequality
  for integral currents, there is an $R\in \bI_{d}(\R^N)$ such that
  $\partial R=S$ and $\mass R\lesssim (\mass^\nu \partial T)^{d/(d-1)}$.

  Consider the mod-$\nu$ current $T'=T-(R\bmod \nu)$.  Since
  $\partial T \cong \partial R\pmod \nu$, this is a cycle modulo
  $\nu$, so, applying Theorem~\ref{thm:NObound} again, there is a
  $U\in \bI_{d}(\R^N)$ such that $U\equiv T'\pmod \nu$ and
  $\mass U\lesssim \mass^\nu T'$.  The sum $U+R$ is an integral current
  such that $U+R\equiv T'+R\equiv T\pmod{\nu}$ and
  $$\mass(U+R)\lesssim \mass^\nu T+(\mass^\nu \partial T)^{d/(d-1)}.$$
\end{proof}

Finally, Corollaries~\ref{cor:multiplesEmbedding} and
\ref{cor:integralCurrentsLift} imply that the set of integral currents
modulo $\nu$ is a quotient of the set of integral currents.
\begin{proof}[{Proof of Corollary~\ref{cor:modNuQuotient}}]
  Let $q_\nu\from \bI_d(\R^N) \to \bI^\nu_d(\R^N)$ be the map
  $q_\nu(T)=(T)^\nu$.  Corollary~\ref{cor:integralCurrentsLift}
  implies that $q_\nu$ is a surjection with kernel equal to the
  closure of $\nu\bI_d(\R^N)$.  By
  Corollary~\ref{cor:multiplesEmbedding}, the set $\nu\bI_d(\R^N)$ is
  closed, so in fact,
  $$\bI^\nu_d(\R^N)=\bI_d(\R^N)/\nu\bI_d(\R^N).$$
\end{proof}

\section{Sketch of the proof of
  Theorem~\ref{thm:cellNObound}}\label{sec:expandedSketch}
In this section, we will sketch the proof of
Theorem~\ref{thm:cellNObound}.  The proof will use a multiscale
argument to construct a pseudo-orientation of $A$; this argument was
inspired by unpublished work of Larry Guth.

In unpublished work, Guth proved a superlinear bound on the problem in
Corollary~\ref{cor:YoungsProblem}.  He showed that if
$T\in \Ccell_d(\tau)$ is a cycle in the unit grid $\tau$ in $\R^N$,
then $\FV(T)\lesssim (\log \FV(2 T)) \FV(2 T)$ \cite{GuthPersonal}.
His argument used a multiscale argument to bound fillings of $T$ based
on approximations of $2T$ at many scales.  Guth used a similar
argument to prove the Perpendicular Pair Inequality in
\cite{GuthContraction}; the following proposition, obtained in
collaboration with Guth, applies these arguments to
nonorientability.

\begin{prop}[{see \cite[\S8]{GuthContraction}}]\label{prop:VlogVbound}
  For every $\nu,d,N>0$, there is a $c>0$ such that if
  $A\in \Ccell_d(\tau;\Z_\nu)$ is a mod-$\nu$ cellular cycle in the unit
  grid in $\R^N$, then 
  $$\NO(A)\le c\mass A(\log \mass A).$$
\end{prop}
\begin{proof}
  We bound $\NO(A)$ by breaking it into contributions from different
  scales.  We will construct a sequence of cycles
  $P_k,P_{k-1},\dots, P_1$ approximating $A$ at finer and finer
  scales, and show that passing from $P_k$ to $P_{k-1}$ adds up to
  $\mass A$ to the nonorientability of $A$.  Since there are
  logarithmically many scales, we obtain the desired bound.

  Let $k$ be such that $2^{dk} \gg \mass A$ and
  $k\lesssim \log \mass A$.  Let $X_i=\R^N\times [2^{i},2^{i+1}]$ and
  let $\Sigma$ be the QC complex introduced in
  Section~\ref{sec:defNot}, which subdivides $\R^N\times [1,\infty)$
  into dyadic cubes so that $X_i$ is tiled by cubes of side length
  $2^i$.  Then $A\times [1]$ is a cellular cycle in $\Sigma$.  A
  pseudo-orientation of $A\times [1]$ projects to a pseudo-orientation
  of $A$, so $\NO(A)\le \NO(A\times [1])$.

  Let $P$ be a deformation operator as in Lemma~\ref{lem:simultFF}
  approximating all chains of the form $A\times [2^i,2^{i+1}]$ or
  $A\times [2^i]$ by cellular chains in $\Sigma$.  Then for each $i$,
  the cycle 
  $$P_i=P(A\times [2^i]) \in \Ccell_{d}(\Sigma;\Z_\nu)$$
  approximates $A$ at scale $2^i$ and satisfies
  $\mass P_i\lesssim \mass A$.  Similarly, for each $i$, the chain
  $$V_i=P(A\times [2^i,2^{i+1}])\in \Ccell_{d+1}(\Sigma;\Z_\nu)$$
  forms an chain with boundary $P_{i+1}-P_i$ and
  $\mass V_i\lesssim 2^i\mass A$.  We will use the $V_i$ to bound
  $\NO(A)$.

  Note that since $A\times [1]$ is already cellular,
  $P_0=A\times [1]$.  Furthermore, because $P_k$ is a cellular
  $d$-cycle in $X_{k}$ with $\mass P_k \lesssim \mass A$ and each
  $d$-cell in $X_{k}$ has volume on the order of $2^{dk}$ (much bigger
  than $\mass A$), we have $P_k=0.$ It follows that
  $$\sum_{i=0}^{k-1} \partial V_i =P_k-P_0=-A\times [1]$$
  and 
  $$\NO(A)\le \sum_{i=0}^{k-1} \NO(\partial V_i).$$

  For each $i$, we construct a pseudo-orientation of $\partial V_i$ by
  decomposing $V_i$ as a sum of cells.  Let
  $W_i\in \Ccell_{d+1}(\Sigma)$ be a chain with integer coefficients
  between $-\frac{\nu}{2}$ and $\frac{\nu}{2}$ such that
  $W_i\equiv V_i\pmod{\nu}$.  Then $\mass W_i=\mass V_i$.  Let
  $R_i=\partial W_i\in \Ccell_{d}(\Sigma)$.  This is an integral cycle
  and $R_i\equiv \partial V_i\pmod{\nu}$, so $R_i$ is a pseudo-orientation
  of $\partial V_i$.

  We can estimate the mass of $R_i$ by counting the number of cells in
  $W_i$.  By Lemma~\ref{lem:simultFF}, we have
  $$\mass W_i=\mass V_i \lesssim 2^i\mass A.$$  
  Since $W_i$ is a sum of cubes of side length $\sim 2^i$ and
  $(d+1)$-volume $\sim 2^{i(d+1)}$, we have
  \[\|W_i\|_1\sim \frac{2^i\mass A}{2^{i(d+1)}}\sim 2^{-id}\mass A.\]
  The boundary of each of these simplices has volume $\sim 2^{id}$, so
  $\mass R_i \lesssim \mass A$, and
  $$\NO(A)\le \sum_{i=0}^{k-1} \mass R_i\lesssim \mass A (\log \mass A)$$
  as desired.
\end{proof}

This is very close to the desired linear bound, but improving this
argument to a linear bound is difficult.  The main obstacle is that
$\NO(A)$ may have large contributions from a wide range of scales.
The bound in the proposition comes from showing that each scale can
contribute at most $\mass A$ to the nonorientability and that there
are logarithmically many scales.  To prove
Theorem~\ref{thm:cellNObound}, we must show instead that the total
contribution from all scales is bounded by $\mass A$.

In the introduction, we constructed an example of a surface $C_k$ that
contains crosscaps of many scales.  If we rescale $C_k$ to get a
cellular surface with area of order $k10^{2k}$, the result typifies
some of the difficulties we will encounter. The rescaled surface
contains many crosscaps at scales $1,10,\dots, 10^k$, and each scale
contributes roughly $10^{2k}$ to the nonorientability.  By varying the
number of crosscaps added at each scale, we can construct a wide
variety of examples with varying areas and nonorientabilities.  In
order to prove Theorem~\ref{thm:cellNObound}, we must show that any
such surface can be decomposed into simple pieces.

\subsection{Decomposing cycles into uniformly rectifiable pieces}\label{sec:decompSketch}
The first part of the proof of Theorem~\ref{thm:cellNObound}
decomposes a cycle in $\R^N$ into a sum of cycles with uniformly
rectifiable supports; for the full statement, see
Theorem~\ref{thm:URdecomp}.  This decomposition breaks complicated
surfaces into ``simple'' pieces; in particular, it breaks the example
$C_k$ above into the initial cube $C_0$ and a collection of projective
planes of different scales.

Recall that a set $E\subset \R^n$ is
\emph{Ahlfors $d$-regular} (or simply \emph{$d$-regular}) with
regularity constant $c$ if for any $x\in E$ and any $0<r<\diam E$,
\[c^{-1}r^d\le |E\cap B(x,r)|\le c r^d.\]
(Here and in the rest of the paper, we use $|E|$ to denote the
Hausdorff $d$-measure of a subset of $\R^N$.)  We say that $E$ is
\emph{$d$-rectifiable} if it can be covered by countably many
Lipschitz images of $\R^d$.

Uniform rectifiability is a quantitative version of rectifiability
that measures the size and complexity of the Lipschitz images that
cover $E$.  There are several ways to define uniform rectifiability, and
we will primarily use the following definition:
\begin{defn}
  A set $E\subset \R^N$ is \emph{uniformly $d$-rectifiable} if there
  is a $c$ such that $E$ is
  Ahlfors $d$-regular with regularity constant $c$ and, for all $x\in E$
  and $0<r<\diam E$, there is a $c$-Lipschitz map $f:B_d(r)\to \R^n$
  such that
  \[|f(B_d(r))\cap E\cap B(x,r)|\ge c^{-1}r^d,\]
  where $B_d(r)$ is the ball of radius $r$ in $\R^d$.
\end{defn}
This is also known as having \emph{big pieces of Lipschitz images}
(BPLI).  We call $c$ the \emph{uniform rectifiability (UR) constant}
of $E$.  Note that this definition is scale-invariant, so if $E$ is
uniformly rectifiable, any scaling of $E$ is uniformly rectifiable
with the same constant.  

Then, for example, the surfaces $C_i$ constructed in the introduction
are all uniformly rectifiable, but with a constant depending on $i$;
as $i$ grows, the area of the sets grows and their geometry becomes
more complicated.  Indeed, if $A$ is a cellular cycle, then $\supp A$
is a finite union of unit cubes, so it is automatically uniformly
rectifiable, albeit with a constant depending strongly on $A$.  The
important feature of Theorem~\ref{thm:URdecomp} is that it decomposes
$A$ into a sum of pieces with UR constants that are \emph{independent}
of $A$.

The proof of Theorem~\ref{thm:URdecomp} relies on results of David and
Semmes on quasiminimizing sets.  Roughly, a set $E\subset \R^N$ is
\emph{quasiminimizing} if compactly supported deformations do not
locally decrease the volume of the set too much.  (For a more detailed
definition, see Sec.~\ref{sec:QMprelims}.)  In \cite{DSQuasi}, David
and Semmes prove that quasiminimizing sets are uniformly rectifiable.
Consequently, for every $k>1$, there is an $\epsilon$ such that if a
set $E$ is not $\epsilon$--uniformly rectifiable, there is a compactly
supported deformation that locally decreases its volume by a factor of
at least $k$.

Let $A$ be a cellular cycle, let $E=\supp A$ and suppose that
$f\from \R^N\to \R^N$ is such a deformation of $E$.  That is, there is a set
$S\subset \R^N$ such that $f$ is the identity map outside $S$,
$f(S)\subset S$, and
$$|f(S\cap E)|\le \frac{|S\cap E|}{k}.$$
Generally, there will be many possible deformations to choose from; we
choose one so that $\diam S$ is close to minimal.  This ensures that
$E$ is quasi-minimizing (and thus uniformly rectifiable) on scales
smaller than the diameter of $S$.  Then $M=A-f_\sharp(A)$ is a cycle
such that
$$\supp M \subset (S\cap E)\cup f(S\cap E).$$
Since $E$ is uniformly rectifiable on scales smaller than the diameter
of $S$, the set $S\cap E$ is uniformly rectifiable (possibly with
worse constants).  To show the uniform rectifiability of $\supp M$, we
need to control $f(S\cap E)$.

Unfortunately, although $|f(S\cap E)|$ is small, we have poor control
over the geometry of $f(S\cap E)$, especially near the boundary of
$S$.  We thus prove a slight strengthening of David and Semmes's
theorem (Proposition~\ref{prop:QMtoUR}).  This proposition allows us
to choose $S$, $f$, $\epsilon$ depending on $k$ so that if
$$S'=\{x\in \R^N\mid d(x,S)<\epsilon \diam S\},$$
then
$$|f(S'\cap E)|\le \frac{|S'\cap E|}{k}.$$
This lets us adjust $f$ so that $f(S'\cap E)$ is uniformly
rectifiable.  Specifically, we let $\tau_{S'}$ be a Whitney cube
decomposition of $S'$ and use Lemma~\ref{lem:FFsets} to approximate
$f(S'\cap E)$ by a union of cells of $\tau_{S'}$ and approximate
$f_\sharp(A)$ by a cellular chain $P(f_\sharp(A))$.  In
Lemma~\ref{lem:URapprox}, we show that $\supp P(f_\sharp(A))$ is
contained in a uniformly rectifiable set.

The result of this is a cycle $M=A-P(f_\sharp(A))$ whose support is
contained in a uniformly rectifiable set.  Furthermore, $f_\sharp(A)$
has substantially smaller support than $A$; we have
$$|\supp A|-|\supp f_\sharp(A)|\gtrsim |\supp M|.$$
Letting $A_1=P(f_\sharp(A))$, 
we can repeat this process inductively to construct a sequence of
cycles $A_2,A_3,\dots$ such that $A_i$ is a deformation of $A_{i-1}$
and $\vol^d \supp A_i$ is strictly decreasing.  Since the $A_i$ are
cellular, this sequence terminates and we can write $A=\sum_i M_i$,
where $M_i=A_{i-1}-A_i$ and $|\supp A|\lesssim \sum_i |\supp M_i|$.

It is helpful to consider the result of applying this process to the
surface $C_k$ constructed in the introduction.  Let $A$ be a copy of
$C_k$, scaled to be a cellular cycle.  When $r$ is small, sets like
$\supp A\cap B_r(x)$ contain few or no crosscaps and are
quasiminimizing.  As we increase $r$, the intersections
$\supp A\cap B_r(x)$ will contain more and larger crosscaps, until
finally $r$ is large enough that $\supp A\cap B_r(x)$ is no longer
quasiminimizing.  At this point, there is an $x_1\in \supp A$ and a
deformation supported in $B_r(x_1)$ that replaces
$\supp A\cap B_r(x_1)$ with a substantially smaller minimal surface.
Let $A_1$ be the result of deforming $A$ and let $M_1=A_0-A_1$.  Then
$M_1$ contains most of the crosscaps in $\supp A\cap B_r(x_1)$.

In fact, there will be many $x_i$ such that $\supp A\cap B_r(x_i)$ is
not quasiminimizing.  We can repeat this process in each such ball to
remove more and more small crosscaps from $A$, eventually obtaining a
cycle $A_k$ with most of its small crosscaps removed.  Without those
small crosscaps, $A_k$ is quasiminimizing at scale $r$, so we can
increase $r$ again until $\supp A_k\cap B_r(x)$ is no longer
quasiminimizing.  We repeat this process roughly $k$ times, each time
removing larger and larger crosscaps from $A$, until finally, $A$ is
quasiminimizing at all scales.  

This decomposition is like the construction of $A$ in reverse.  We
originally constructed $A$ by starting with a cube, then adding
crosscaps at all scales, starting with the largest crosscaps and
ending with the smallest.  To decompose $A$, we reverse that process,
removing the crosscaps from smallest to largest.

\subsection{Bounding nonorientability of uniformly rectifiable cycles}\label{sec:URcaseSketch}

The second part of the proof of Theorem~\ref{thm:cellNObound} is to
bound the nonorientability of cycles supported on uniformly
rectifiable sets.  Specifically, we claim that
\begin{prop}\label{prop:URcase}
  If $A\in \Ccell_{d}(\tau;\Z_{\nu})$ and $\supp A$ is contained in a
  $d$-dimensional uniformly rectifiable set $E$, then
  \[\NO(A)\lesssim |E|,\]
  with implicit constant depending only on $\nu$, $N$ and the uniform
  rectifiability constant of $E$.
\end{prop}
The main idea of the proof is to combine the methods of
Prop.~\ref{prop:VlogVbound} with a corona decomposition of the support
of $A$.

Recall that in Prop.~\ref{prop:VlogVbound}, we constructed a
pseudo-orientation of a cycle $A$ by approximating cycles of the form
$A\times [2^i]$ in a complex $\Sigma$ that decomposes
$\R^N\times [1,\infty)$ into dyadic cubes.  We let $P$ be a
deformation operator for $\Sigma$ as in Lemma~\ref{lem:simultFF}.  For
each $i$, we constructed an approximation $P_i=P(A\times [2^i])$
consisting of cubes of side length $2^i$, then connected these
approximations by chains $V_i=P(A\times [2^i,2^{i+1}])$.  The chain
$V_i$ has boundary equal to $P_{i+1}-P_i$, and we constructed a
pseudo-orientation of $\partial V_i$ by choosing random orientations
on the cells that make up $V_i$.  The mass of this pseudo-orientation
is bounded by the total volume of the boundaries of all of the cells
of the $V_i$, and by counting cells, we find that
\begin{equation}\label{eq:changeBound}
  \NO(P_i-P_{i+1})\lesssim \mass A
\end{equation}

When $P_{i+1}$ has simpler topology than $P_i$, this estimate is
reasonably accurate.  For example, if $A$ is covered by crosscaps of
diameter roughly $2^i$, those crosscaps will appear in $P_i$ but not
in $P_{i+1}$.  The difference $P_i-P_{i+1}$ is nonorientable, and
\eqref{eq:changeBound} is sharp.

The fact that makes Proposition~\ref{prop:URcase} possible is that if
$A$ is uniformly rectifiable, then there are many scales and locations
on which $A$ is close to a plane.  When this happens, one
approximation looks very similar to another approximation, so we can
skip over intermediate scales.  

For example, suppose that there is a smooth projective plane
$K\subset \R^N$ such that $[K]$ and $A$ are close.  Specifically,
suppose that there is some $j\ll k$ such that for $i\ge j$,
$P_i = P([K]\times [2^i])$ and suppose that $\area K\sim 2^{2k}$,
$\diam K\sim 2^k$.  One such $K$ and $A$ could be obtained by letting
$K$ be an embedded projective plane, letting $K'$ be the result of
replacing many small discs in $K$ by small crosscaps, and letting
$A=[K']$.

We can construct a pseudo-orientation of $P_{k}-P_{j}$ by lifting
$[K]$ to a chain with integer coefficients.  Let $\gamma$ be a simple
closed curve such that $K\setminus \gamma$ is homeomorphic to a disc
$D$.  By the systolic inequality for the projective plane, we may
suppose $\ell(\gamma)\lesssim \sqrt{\area K}\sim 2^k$.  This disc has
a fundamental class $[D]\in \Ccell_2(\Sigma;\Z)$ such that
$[D]\equiv [K]\pmod{2}$ and $\partial [D]=2[\gamma]$.  Since
$[D]\times [2^i]$ is congruent to $[K]\times [2^i]$, we can
approximate $[D]$ to obtain chains
$$D_i=P([D]\times [2^i])\in\Ccell_2(\Sigma;\Z)$$ 
such that $D_i\equiv P_i\pmod{2}$ and chains
$$H_i=P([D]\times [2^i,2^{i+1}])\in\Ccell_3(\Sigma;\Z)$$ 
such that 
\begin{align*}
  \partial H_i&=D_{i+1}-D_{i}+P(\partial[D]\times [2^i,2^{i+1}])\\
  &=D_{i+1}-D_{i}+P(2[\gamma]\times [2^i,2^{i+1}])\\
  &\equiv P_{i+1}-P_i\pmod{2}.
\end{align*}
Then $R=\sum_{i=j}^{k-1}\partial H_i$ is a pseudo-orientation of
$P_{k}-P_j$ and 
\begin{align}
  \notag  \NO(P_{k}-P_j)&\le \mass R \lesssim \mass D_j+\mass D_{k}+\mass P(2[\gamma]\times [2^j,2^{k}])\\ 
  \label{eq:noChangeBound}
                          &\lesssim 2^{2k}+2^{2k}+2^{2k}\lesssim \mass
                            A.
\end{align}

If $A$ only has topological features at a few scales, then we can
alternate between these two estimates, using \eqref{eq:changeBound} at
scales where $P_{i-1}$ and $P_i$ are different and
\eqref{eq:noChangeBound} for ranges of scales where the $P_i$'s do
not change very much.  If the number of times we need to use
\eqref{eq:changeBound} is bounded, we get a linear bound on the
nonorientability.

The main problem with this approach is that even if $A$ is uniformly
rectifiable, it can have features at infinitely many different scales.
To construct such a set, we can start with a cube of side length $2^k$
and replace half of the faces of the cube by crosscaps of scale $2^k$.
If we cover half of the remainder with crosscaps of scale $2^{k-1}$,
then cover half of the remainder with even smaller crosscaps, and so
on, the result is uniformly rectifiable, but has complicated topology
at all scales.

Nevertheless, a uniformly rectifiable set cannot be complicated
everywhere and at all scales.  This idea can be quantified by using a
\emph{corona decomposition} of $E$.  An $(\eta,\theta)$ corona
decomposition of $E$ partitions $E\times \R$ into a set $\cB$ of bad
cubes and a set $\cF$ of stopping-time regions.  (See
Section~\ref{sec:URprelims} for more details.)  The number and size of
the bad cubes and stopping-time regions is bounded.  On bad cubes, we
have little control over the geometry of $E$, but if $S\in \cF$ is a
stopping-time region, there is a Lipschitz graph $\Gamma(S)$ with
Lipschitz constant at most $\eta$ such that for every $(x,t)\in S$,
$d(x,\Gamma(S))\lesssim \theta t$.

We will use this decomposition of $E\times \R$ to decompose $A$.  Let
$k$ be as in Prop.~\ref{prop:VlogVbound}, so that $2^k\sim \diam E$
and $P(A\times [2^k])=0$.  Let $\extend{A}=A\times [1,2^{k}]$.  Then
$\supp \extend{A}\subset E\times \R$, so we can write
\[\extend{A}=\sum_{Q\in \cB} \extend{A}_Q+\sum_{S\in \cF} \extend{A}_S\]
where $\extend{A}_Q$ and $\extend{A}_S$ are the restrictions of
$\extend{A}$ to the corresponding bad cubes and stopping-time regions
in $E\times \R$.  Then
$$P(\partial \extend{A})=P(A\times[2^{k}])-P(A\times [1])$$
and 
$$\sum_{Q\in \cB} P(\partial \extend{A}_Q)+\sum_{S\in \cF} P(\partial \extend{A}_S)=-A\times [1].$$
So
$$\NO(A)\le  \sum_{Q\in \cB} \NO(P(\partial \extend{A}_Q))+\sum_{S\in \cF} \NO(P(\partial \extend{A}_S)).$$

When $Q\in \cB$ is a bad cube, $P(\partial \extend{A}_Q)$ will consist
of boundedly many cells of $\Sigma$, so its nonorientability will be
bounded.  When $S\in \cF$ is a stopping-time region, $S$ will be
approximated by a Lipschitz graph.  This will induce a pseudo-orientation
on $P(\partial \extend{A}_S)$ and give a bound on its
nonorientability

In total, the stopping-time regions will contribute roughly $\mass A$
to $\NO(A)$, and the bad cubes are sparse enough and small enough that
they will also contribute roughly $\mass A$.  Adding these together
will give the desired linear bound on $\NO(A)$.

\section{Decomposing cycles into uniformly rectifiable pieces}
\label{sec:URdecomp}

In this section, we prove Theorem~\ref{thm:URdecomp}, which states
that any cycle in $\R^N$ can be decomposed into a sum of cycles
supported on uniformly rectifiable sets.  In Sec.~\ref{sec:URprelims},
we state some definitions and theorems about uniform rectifiability to
be used later; one can find a full exposition in \cite{DSAnalysis}.
The main tool we use to construct this decomposition is a result of
David and Semmes \cite{DSQuasi} stating that quasiminimizing sets are
uniformly rectifiable.  We will define quasiminimizing sets and prove
a slight generalization of their theorem in Sec.~\ref{sec:QMprelims},
then use this generalization to construct the desired decomposition in
Sec.~\ref{sec:pfURdecomp}.

\subsection{Uniform rectifiability}\label{sec:URprelims}

In this section, we review some definitions and results concerning
uniformly rectifiable sets that we will use in the rest of this paper
Throughout the rest of this paper, if $E\subset \R^N$, we will use
$|E|$ to denote its Hausdorff $d$-measure.  As noted above, all
implicit constants will be taken to depend on $d$ and $N$.

In Sec.~\ref{sec:decompSketch}, we gave a definition of uniform
rectifiability in terms of big pieces of Lipschitz images.  Another
way of defining uniformly rectifiable sets uses cubical patchworks and
corona decompositions.  We say that a collection of sets $\Gamma$ is a
partition of $E$ if the elements of $\Gamma$ are disjoint and their
union is all of $E$.  A cubical patchwork, also known as a set of
Christ cubes, for $E$ is a collection of partitions of $E$ into
\emph{pseudocubes} which generalizes the usual decomposition of $\R^d$
into dyadic cubes.
\begin{defn}\label{def:patchwork} 
  Let $E$ be an Ahlfors $d$-regular set with
  $2^{k}< \diam E\le 2^{k+1}$ for some $k\in \Z$.  A cubical patchwork for $E$ is
  a collection $\{\Delta_i\}_{i=-\infty}^k$ of partitions of $E$ with
  the following properties:
  \begin{enumerate}
  \item $\Delta_k=\{E\}$.
  \item Each $Q\in \Delta_i$ satisfies 
    $\diam Q\sim 2^i$ and $|Q|\sim 2^{di}$.
  \item If $Q\in \Delta_i$ and $Q'\in \Delta_j$, with $i\le j$  then
    either $Q$ and $Q'$ are disjoint or $Q\subset Q'$.  
  \item \label{item:bdary} For any $Q\in \Delta_i$ and any $r>0$, let
    \[\partial Q(r)=\{x\in Q\mid d(x, E\setminus Q)\le r\}\cup \{x\in  E\setminus Q\mid d(x, Q)\le r\}.\]
    There is a $C>1$ such that for any $t>0$,
    \begin{equation}\label{eq:PatchworkSmallBdry}
      |\partial Q(t 2^i)|\le Ct^{1/C}2^{id}
    \end{equation}
    for each $Q\in \Delta_i$.
  \end{enumerate}
  We call the elements of $\Delta_i$ \emph{pseudocubes}, and we let
  \[\Delta=\bigcup_i \Delta_i.\]
\end{defn}

If $Q\in \Delta_i$, we say that any set $Q\in \Delta_{i-1}$ with
$Q'\subset Q$ is a \emph{child} of $Q$ and that any set
$Q\in \Delta_{j}$ with $Q'\subset Q$ and $j<i$ is a \emph{descendant}
of $Q$.

We call the constants in the definition the \emph{patchwork constants}
of $\Delta$.  David \cite{DavidMorceaux} showed that any Ahlfors
$d$-regular set in $\R^n$ has a cubical patchwork whose patchwork
constants are functions of the regularity constant of the set.  Christ
\cite{ChristTb} generalized this result to metric-measure spaces.

Condition \ref{item:bdary} above is a little subtle.  It implies that
the boundary of a pseudocube is small.  One consequence is the
following lemma (\cite[Lemma~I.3.5]{DSAnalysis}):
\begin{lemma}\label{lem:DScenters}
  There is a $C>1$ depending only on $d$, $n$, and the regularity
  constant for $E$ such that for each cube $Q\in \Delta$ there is a
  center $c(Q)\in Q$ such that
  \[d(c(Q),E\setminus Q)\ge C^{-1}\diam Q.\]
\end{lemma}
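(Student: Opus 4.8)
The plan is to deduce Lemma~\ref{lem:DScenters} from the small-boundary condition \eqref{item:bdary} in Definition~\ref{def:patchwork}, i.e., from the estimate \eqref{eq:PatchworkSmallBdry}. The point is that the ``annular shell'' $\partial Q(t2^i)$ of width $t2^i$ around the boundary of a pseudocube $Q\in\Delta_i$ has $d$-measure at most $Ct^{1/C}2^{id}$, which is a vanishingly small fraction of $|Q|\sim 2^{di}$ as $t\to 0$. So for small enough fixed $t$, the ``core'' $Q\setminus\partial Q(t2^i)$ has measure at least, say, half of $|Q|$, and in particular is nonempty.

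First I would choose a threshold $t_0>0$, depending only on $d$, $n$, and the regularity constant, small enough that $Ct_0^{1/C}$ is strictly less than the constant $c^{-1}$ implicit in the lower bound $|Q|\gtrsim 2^{di}$ from property~(2) of the patchwork. (Concretely: since $|Q|\ge c_1 2^{di}$ and $|\partial Q(t2^i)|\le Ct^{1/C}2^{di}$, pick $t_0$ so that $Ct_0^{1/C}<c_1$.) Then for any $Q\in\Delta_i$ the set $Q\setminus\partial Q(t_0 2^i)$ has positive measure, hence is nonempty; pick any point in it and call it $c(Q)$. By the definition of $\partial Q(r)$, a point $x\in Q$ lies outside $\partial Q(t_0 2^i)$ precisely when $d(x,E\setminus Q)>t_0 2^i$. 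Since $\diam Q\sim 2^i$ by property~(2), we have $t_0 2^i\gtrsim \diam Q$, so $d(c(Q),E\setminus Q)\ge t_0 2^i\ge C^{-1}\diam Q$ for a suitable constant $C$ depending only on $d$, $n$, and the regularity constant, as required. One should note that $E\setminus Q$ could in principle be empty — but this only happens when $Q=E$ (the top cube $\Delta_k$), in which case $d(c(Q),\emptyset)=\infty$ and the bound holds trivially for any choice of $c(Q)$.

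I do not expect any serious obstacle here: the lemma is essentially a direct unwinding of the small-boundary condition, and the only ``work'' is the bookkeeping of constants — making sure the chosen $t_0$ and the final constant $C$ depend only on the allowed parameters (which is automatic, since every implicit constant in Definition~\ref{def:patchwork} is a function of the regularity constant by David's theorem). The mild subtlety to flag is the degenerate case $Q=E$ mentioned above, and the fact that the constant $C$ in \eqref{eq:PatchworkSmallBdry} and the comparability constants in property~(2) must be combined correctly; I would simply record that the resulting $C$ in the statement is a function of the patchwork constants of $\Delta$, which in turn depend only on $d$, $n$, and the regularity constant of $E$.
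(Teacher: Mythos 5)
Your argument is correct: choosing $t_0$ small enough (in terms of the patchwork constants) that $Ct_0^{1/C}2^{id}$ is strictly less than the lower bound on $|Q|$ from property (2) forces the core $Q\setminus\partial Q(t_0 2^i)$ to have positive $\Haus^d$-measure, and any point of it serves as $c(Q)$ since $\diam Q\sim 2^i$; the degenerate case $Q=E$ is handled as you say. Note that the paper itself does not prove Lemma~\ref{lem:DScenters} but simply cites \cite[Lemma~I.3.5]{DSAnalysis}; your derivation from the small-boundary condition \eqref{eq:PatchworkSmallBdry} is the standard argument behind that citation, so there is no substantive difference in approach, only the (correctly flagged) bookkeeping that the final constant depends on the patchwork constants, which David's construction makes functions of $d$, $n$, and the regularity constant.
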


It does not, however, imply that the boundary of a pseudocube is
very smooth.  In fact, the condition only guarantees that the
Hausdorff dimension of the boundary is strictly less than $d$:
\begin{lemma} \label{lem:pseudocubeBoundary}
  Let $Q\in \Delta^i$ be a pseudocube in a cubical patchwork for a
  set $E$ with $\dim(E)=d$ and let $\partial Q\subset E$ be the
  boundary of $Q$ relative to $E$ (i.e., the intersection of the
  closures of $Q$ and of $E\setminus Q$).  For any $1>t>0$, we can
  cover $\partial Q$ with $\sim C t^{1/C-d}$ balls of radius $t
  2^{i}$, where $C>1$ is as in Def.~\ref{def:patchwork}.
\end{lemma}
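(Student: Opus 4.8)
The plan is to deduce the covering bound directly from the small-boundary estimate \eqref{eq:PatchworkSmallBdry} in Definition~\ref{def:patchwork}, using Ahlfors regularity to convert a measure bound on a neighborhood of $\partial Q$ into a bound on the number of balls needed to cover it. First I would fix $1>t>0$ and observe that $\partial Q$, being the intersection of the closures of $Q$ and $E\setminus Q$, is contained in $\partial Q(t2^i)$ for every admissible $t$; in fact every point of $\partial Q$ has distance $0$ to both $Q$ and $E\setminus Q$, so $\partial Q\subset \partial Q(s)$ for all $s>0$, and in particular $\partial Q\subset \partial Q(2t\cdot 2^i)$. By \eqref{eq:PatchworkSmallBdry} applied with the parameter $2t$ in place of $t$ (absorbing the factor $2^{1/C}$ into the constant), we get
\[
|\partial Q(2t\cdot 2^i)|\lesssim t^{1/C}2^{id}.
\]

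Next I would run a standard covering/packing argument. Choose a maximal collection of points $x_1,\dots,x_m\in\partial Q$ that are $t2^i$-separated; then the balls $B(x_j,t2^i)$ cover $\partial Q$ by maximality, so it suffices to bound $m$. The balls $B(x_j,\tfrac12 t2^i)$ are pairwise disjoint, and each $x_j\in\partial Q\subset E$, so by Ahlfors $d$-regularity of $E$ (with regularity constant controlled by the patchwork/UR constant) we have $|E\cap B(x_j,\tfrac12 t2^i)|\gtrsim (t2^i)^d$. Moreover each such ball is contained in $\partial Q(t2^i)\subset \partial Q(2t\cdot 2^i)$: indeed if $y\in B(x_j,\tfrac12 t2^i)$ then $d(y,Q)\le d(y,x_j)+d(x_j,Q)\le \tfrac12 t2^i$ and likewise $d(y,E\setminus Q)\le\tfrac12 t2^i$, so $y$ lies within distance $t2^i$ of both $Q$ and $E\setminus Q$, hence $y\in\partial Q(t2^i)$. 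Summing the disjoint lower bounds,
\[
m\,(t2^i)^d \lesssim \sum_{j=1}^m |E\cap B(x_j,\tfrac12 t2^i)| \le |\partial Q(2t\cdot 2^i)| \lesssim t^{1/C}2^{id},
\]
so $m\lesssim t^{1/C-d}$, which is the claimed bound (the implicit constant depending only on $d$, $n$, and the regularity constant, i.e., on $C$).

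I do not expect any serious obstacle here: the statement is essentially a repackaging of the small-boundary condition via volume-packing, and the only points requiring a little care are (i) checking the containments $\partial Q\subset\partial Q(t2^i)$ and $B(x_j,\tfrac12 t2^i)\subset\partial Q(t2^i)$ so that \eqref{eq:PatchworkSmallBdry} can be applied, and (ii) noting that when $t2^i$ may exceed the scale at which Ahlfors regularity is stated one can harmlessly shrink $t$, since for $t$ bounded below the conclusion is trivial (a pseudocube of diameter $\sim 2^i$ is covered by $O(1)$ balls of radius $\sim 2^i$). If one wants the cleanest constants, one can also invoke Lemma~\ref{lem:DScenters} to see that $\partial Q$ avoids the ball $B(c(Q),C^{-1}\diam Q)$, but this is not needed for the stated estimate.
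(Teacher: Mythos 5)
Your proposal is correct and follows essentially the same route as the paper: a maximal $t2^i$-separated net in $\partial Q$, disjoint half-radius balls whose $E$-portions lie in $\partial Q(\cdot)$, Ahlfors regularity for the lower volume bound, and the small-boundary condition \eqref{eq:PatchworkSmallBdry} for the upper bound. The only cosmetic point is that, as in the paper, the containment should be read as $E\cap B(x_j,\tfrac12 t2^i)\subset\partial Q(t2^i)$ rather than containment of the full ball, which is exactly how you use it.
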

\begin{proof}
  Consider $S=\partial Q(t2^i)$.  This contains the $t
  2^i$-neighborhood of $\partial Q$ and has $|S|\le
  Ct^{1/C}2^{id}$.  Let $M$ be a maximal set of points of $\partial
  Q$ spaced a distance $t 2^{i}$ apart.  Then the balls of radius
  $t 2^{i}$ centered at the points of $M$ cover $\partial Q$, and
  the balls of radius $t 2^{i-1}$ are disjoint and
  contained in $S$.  By Ahlfors regularity,
  \[\#(M)t^d 2^{id}\lesssim |S|\le Ct^{1/C}2^{id},\]
  so $\#M\lesssim C t^{1/C-d}$ as desired.
\end{proof}

This makes it difficult to construct chains supported on pseudocubes,
because the boundary of a pseudocube is generally unrectifiable.  We
will avoid this problem by considering the case that $E$ is a union of
$d$-cells of the unit grid $\tau$.  When this is the case, we can find
a patchwork such that the closure of any sufficiently large pseudocube
is a union of $d$-cells. 
\begin{lemma}\label{lem:gridPatchwork}
  If $E$ is a Ahlfors $d$-regular set that is a union of $d$-cells of
  $\tau$ and $2^{k}< \diam E\le 2^{k+1}$ for some $k\in \Z$, then
  there is a cubical patchwork $\{\Delta_i\}$ of $E$ such that if
  $Q\in \Delta_i$ and $i>0$, then  $\Clos(Q)$  is a
  union of $d$-cells of $\tau$.  Furthermore, the patchwork constants depend only on $d, N$, and the 
  regularity constant of $E$.
\end{lemma}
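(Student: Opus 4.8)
The plan is to build the patchwork in two regimes: above the unit scale I reuse David's construction, suitably constrained, and below it I use the ordinary dyadic subdivision. Write $E=\bigcup_{v\in V}K_v$, where $V$ is the set of centres of the $d$-cells of $\tau$ contained in $E$, and fix once and for all a \emph{half-open} realization $\tilde K_v\subset K_v$ of each cell --- assign each face of $\tau$ of dimension $<d$ contained in $E$ to one adjacent $d$-cell by a fixed rule --- so that the $\tilde K_v$ partition $E$ and $\Clos(\tilde K_v)=K_v$. Since each $K_v$ has diameter $\sqrt d$ and unit measure and only boundedly many (in $N$) $d$-cells of $\tau$ touch a given one, $V$ is $\tfrac{\sqrt d}{2}$-dense in $E$ and $\#(V\cap B(v,r))\sim |E\cap B(v,2r)|\sim r^d$ for $r\gtrsim 1$, with constants controlled by the regularity constant of $E$ and by $d,N$.

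I then run David's construction of a cubical patchwork (\cite[Appendix~I]{DWavelets}; see also \cite{DSAnalysis}) for $E$, but always choosing the net points defining the cubes at scale $2^i$ from $V$. This is legitimate for all scales $2^i\gtrsim_d 1$ because $V$ is $\tfrac{\sqrt d}{2}$-dense, and the boundedly many (in $d$) intermediate scales between $1$ and that threshold can be filled in by any crude balanced sub-partition into cell-unions of the correct sizes and diameters, at the cost of $d,N$-dependent constants only. This produces partitions $\{\Delta^V_i\}_{i\ge 1}^{k}$ of $V$ with all the patchwork properties, whose patchwork constants --- in particular the exponent constant $C'$ appearing in \eqref{eq:PatchworkSmallBdry} --- depend only on $d$, $N$, and the regularity constant of $E$. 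Now set $\Delta_i=\{\,\bigcup_{v\in Q}\tilde K_v\mid Q\in\Delta^V_i\,\}$ for $i>0$, $\Delta_0=\{\tilde K_v\mid v\in V\}$, and for $i<0$ let $\Delta_i$ subdivide each $\tilde K_v$ dyadically. Nesting, $\Delta_k=\{E\}$, and the bounds $\diam Q\sim 2^i$ and $|Q|\sim 2^{di}$ are immediate; for $i>0$ one has $|\bigcup_{v\in Q}\tilde K_v|=\#Q$ and $\Clos(\bigcup_{v\in Q}\tilde K_v)=\bigcup_{v\in Q}K_v$ is a union of $d$-cells, as required.

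The only axiom needing real work is the small-boundary estimate \eqref{eq:PatchworkSmallBdry} for $i>0$, which I verify by splitting at $r=1$. If $r=t2^i\ge 1$, every point of $\partial Q(r)$ lies in a cell $K_v$ whose centre is within $r+\sqrt d\lesssim r$ of $V\setminus Q$ (or symmetrically), so $\partial Q(r)$ is covered by $\lesssim\#\partial_V Q(Cr)\lesssim (r/2^i)^{1/C'}2^{di}=t^{1/C'}2^{di}$ cells, whence $|\partial Q(r)|\lesssim t^{1/C'}2^{di}$. If $r=t2^i<1$, then $\partial Q(r)$ lies in the $r$-neighbourhood, inside $E$, of the faces of $\tau$ that separate a cell of $Q$ from a cell not in $Q$; there are $\lesssim\#\partial_V Q(C)\lesssim 2^{di-i/C'}$ such faces, and the $r$-neighbourhood within $E$ of one $(d-1)$-face contributes $\lesssim_N r$ to the $d$-measure, so $|\partial Q(r)|\lesssim_N r\cdot 2^{di-i/C'}=(t2^i)^{1-1/C'}t^{1/C'}2^{di}\le t^{1/C'}2^{di}$, using $t2^i<1$. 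For $i\le 0$ the dyadic cells satisfy \eqref{eq:PatchworkSmallBdry} trivially, and taking $C$ to be a large enough multiple of $C'$ absorbs the $d,N$-dependent constants. I expect the main obstacle to be exactly this sub-unit-scale part: David's small-boundary argument is probabilistic and gives nothing below the spacing of its net points, so below scale $1$ one must instead use that the boundary of a union of grid cells is a union of flat $(d-1)$-faces, bounding the number of ``active'' such faces by the discrete small-boundary property of $\Delta^V$ and the measure near each by the thinness of slabs.
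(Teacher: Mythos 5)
Your overall strategy is essentially the paper's: snap a David-style patchwork to the grid through the barycenters of the $d$-cells, use ordinary dyadic subdivision below the unit scale, and verify \eqref{eq:PatchworkSmallBdry} by splitting at $r=1$, handling the sub-unit regime by counting boundary cells at scale $\sim 1$ and using that an $r$-neighborhood of a $(d-1)$-dimensional face meets $E$ in measure $\lesssim r$. The one place you genuinely diverge is also the one weak point: you propose to ``run David's construction with net points chosen from $V$'' so as to obtain a discrete patchwork of $V$ directly, and you assert rather than prove that this works. This is not a black-box application of the cited result: David's theorem is stated for Ahlfors $d$-regular sets, $V$ with counting measure is not one, and it is not obvious without reopening the construction that constraining the nets preserves the exponent in \eqref{eq:PatchworkSmallBdry} with constants depending only on $d$, $N$, and the regularity constant. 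The paper avoids this entirely: it applies David's theorem to $E$ itself to get an arbitrary patchwork $\{\Gamma_i\}$ and defines $\delta_Q$ as the union of the half-open cells whose barycenters lie in $Q\in\Gamma_i$; the discrete facts you take for granted are then derived --- the lower bounds $|\delta_Q|\gtrsim 2^{id}$ and $\diam\delta_Q\gtrsim 2^i$ come from the centers Lemma~\ref{lem:DScenters} (a pseudocube contains a ball of radius $\sim 2^i$ about its center, hence $\gtrsim 2^{id}$ barycenters), and the count of boundary-adjacent cells comes from $\partial\delta_Q(r)\subset\partial Q(r+\sqrt N)$ together with the measure bound \eqref{eq:PatchworkSmallBdry} for $\Gamma$. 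Your argument becomes complete if you replace the constrained-net step by this snap-and-derive route (or by an honest appeal to Christ-type cube constructions for doubling, not Ahlfors-regular, measures, which is more than the citation as stated provides); with that substitution the rest of your verification goes through.

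One smaller point in the case $t2^i<1$: you cover $\partial Q(t2^i)$ by $r$-neighborhoods of the $(d-1)$-faces separating a cell of $Q$ from a cell outside $Q$. A cell of $Q$ and a cell outside $Q$ may meet only in a face of dimension strictly less than $d-1$ (a corner contact), and points of $E$ near such a contact lie in $\partial Q(r)$ without being within $r$ of any separating $(d-1)$-face. The fix is what the paper does: cover $\partial Q(r)$ by the $r$-neighborhoods of the full cell boundaries $\partial K_v$ for the boundedly many (at scale $\sim 1$) $v$ in the discrete boundary; the count $\lesssim 2^{id-i/C'}$, the per-cell contribution $\lesssim r$, and the final bound $\lesssim t^{1/C'}2^{id}$ are unchanged.
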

\begin{proof}
  In this proof, all our implicit constants will depend on $d, N$, and
  the Ahlfors regularity constant of $E$.  

  Enumerate the $d$-cells of $E$ as $D_1,\dots,D_m$ and let
  \[D'_i=D_i\setminus \bigcup_{j=1}^{i-1} D_j,\]
  so that the $D'_i$'s form a partition of $E$ and $\Clos(D'_i)=D_i$
  for all $i$.  For each $i=1,\dots, n$, let $x_i$ be the barycenter
  of $D_i$, and if $S\subset E$, let
  \[\delta_S=\bigcup_{x_i\in S} D'_i.\]
  Let $\Gamma=\{\Gamma_i\}_{i\in \Z}$ be a cubical patchwork for $E$.
  We can choose $\Gamma$ so that its patchwork constants depend only on $d, N$, and the Ahlfors
  regularity constant of $E$.  For each $i=0,\dots,k$, let
  \[\Delta_i=\{\delta_Q \mid Q\in \Gamma_i, \delta_Q\ne \emptyset\},\] and for each $i<0$, let $\Delta_{i}$ be the
  partition of $E$ that divides each $d$-cell of $E$ into $2^{-id}$
  cubes of side length $2^i$.  We claim that the $\Delta_i$'s satisfy
  Def.~\ref{def:patchwork}.  Properties 1 and 3 are easy to check, and
  properties 2 and 4 clearly hold for $\Delta_i$ when $i<0$.  It
  remains to check that the $\Delta_i$ satisfy properties 2 and 4 when
  $i\ge 0$.

  First, we check property 2.  Suppose that $i\ge 0$ and $Q\in \Gamma_i$ is a pseudocube such that
  $\delta_Q\ne \emptyset$.  Let $R=\diam Q$.  Let $x=c(Q)$ be the
  center of $Q$ as in Lemma~\ref{lem:DScenters}, and let $C>1$ be as
  in Lemma~\ref{lem:DScenters}.  Note that $C$ depends only on $d, N$,
  and the Ahlfors regularity constant of $E$.  Suppose that $R\le
  2C\sqrt{N}\sim 1$.  Since $\delta_Q\ne \emptyset$, it contains at least
  one cell of $\tau$, so $|\delta_Q|\ge 1$ and $\diam
  \delta_Q\ge 1$.  On the other hand,
  \begin{equation}\label{eq:diamBound}
    \diam \delta_Q\le R+\sqrt{N}\lesssim 1,
  \end{equation}
  so $|\delta_Q|\sim R^d$ and $\diam \delta_Q\sim R$, verifying
  property 2.

  We thus assume that $R> 2C\sqrt{N}$ and claim that
  \begin{equation}\label{eq:deltaQbounds}
    B(x,C^{-1}R/4)\cap E \subset \delta_Q\subset B(x,2R)\cap E.
  \end{equation}
  If a $d$-cell of $E$ intersects $B(x,C^{-1}R/4)$, then its center
  lies inside $B(x,C^{-1}R/2)$.  By Lemma~\ref{lem:DScenters},
  $B(x,C^{-1}R/2)\cap E\subset Q,$ so 
  \[B(x,C^{-1}R/4)\cap E \subset \delta_Q.\]
  On the other hand, if a $d$-cell of $E$ lies in $\delta_Q$, its
  center lies in $Q$.  Since $Q\subset B(x,R)$, we have
  $\delta_Q\subset B(x,2R)$ as desired.
  Equation~\eqref{eq:deltaQbounds} implies property 2 by the Ahlfors
  regularity of $E$.

  To show property 4, let $C'>1$ be the constant in
  \eqref{eq:PatchworkSmallBdry} for the patchwork $\{\Gamma_i\}$.  Let
  $Q\in \Gamma_i$, $\delta_Q\in \Delta_i$, and $t>0$.  We have
  \[\partial Q(r) =\{x\in E\mid d(x, E\setminus Q)\le r \text{ and } d(x, Q)\le r\};\]
  let 
  \[\partial \delta_Q(r)=\{x\in E\mid d(x, E\setminus \delta_Q)\le r \text{
    and } d(x, \delta_Q)\le r\}.\] If $x\in \partial \delta_Q(r)$, then there
  is some $y\in \delta_Q$ such that $d(x,y)\le r$.  Since $y$ is
  contained in a $d$-cell $D\subset \delta_Q$, 
  \[d(x,\delta_Q)\le d(x,x_D)\le d(x,y)+\sqrt{N}\le r+\sqrt{N}.\]
  Likewise, 
  \[d(x,E\setminus \delta_Q)\le r+\sqrt{N},\]
  so $\partial \delta_Q(r)\subset \partial Q(r+\sqrt{N})$.  In particular,
  \begin{equation}\label{eq:boundary}
    |\partial \delta_Q(t 2^i)|\le C'(t+2^{-i}\sqrt{N})^{1/C'}2^{id},
  \end{equation}
  and if $t \ge 2^{-i}$, then
  \[|\partial \delta_Q(t 2^i)|\lesssim C' t^{1/C'}2^{id}.\]

  On the other hand, when $t \le 2^{-i}$, we can bound $|\partial
  \delta_Q(t 2^i)|$ by counting the number of cells that intersect
  $\partial \delta_Q(1)$.  Any cell of $\tau$ that intersects
  $\partial \delta_Q(1)$ is completely contained in $\partial
  Q(3\sqrt{N})$, so if 
  \[K=\#\{D\in \tau^{(d)}\mid D \cap\partial \delta_Q(1)\ne \emptyset\},\]
  then 
  \[K\le |\partial Q(3\sqrt{N})| \lesssim 2^{id-i/C'}.\]
  Consequently, if $\epsilon \le 1$, then $\partial
  \delta_Q(\epsilon)$ is a subset of the $\epsilon$-neighborhood of
  the boundary of at most $K$ $d$-cells.  This neighborhood has
  Hausdorff measure $\lesssim K \epsilon$, so if $\tau \le 2^{-i}$,
  then
  \[|\partial \delta_Q(\tau 2^i)|\lesssim \tau 2^i K\lesssim
  \tau^{1/C} 2^{id}\]
  as desired.
\end{proof}
If $E$, $k$, and $\{\Delta_i\}_{i=0}^k$ are as in the lemma, we will
refer to $\Delta=\bigsqcup_i \Delta_i$ as a \emph{cellular cubical
  patchwork} for $E$.

David and Semmes used cubical patchworks in an alternative definition
of uniform rectifiability.  To state this definition, we first need to
define coronizations.  Our definition is taken from \cite{DSAnalysis}.

\begin{defn}\label{def:coronization}
  Let $E\subset \R^N$ be a $d$-dimensional Ahlfors regular set,
  equipped with a cubical patchwork $\Delta$.  A \emph{coronization}
  of $E$ is a partition of $\Delta$ into \emph{bad cubes} and
  \emph{stopping-time regions}.  More precisely, it is a triple
  $(\cB,\cG,\cF)$ such that $\cB$ (the set of bad cubes) and $\cG$
  (the set of good cubes) partition $\Delta$ into two disjoint sets
  and $\cF$ is a collection of subsets of $\cG$, called
  stopping-time regions.  These sets have the following properties:
  \begin{enumerate}
  \item $\cB$ satisfies a Carleson packing condition.
  \item The elements of $\cF$ are disjoint and their union is $\cG$.
  \item Each $S\in \cF$ is coherent.  This entails three properties.
    First, every $S$ has a unique maximal element $Q(S)\in S$ which
    contains every element of $S$.  Second, if $Q\in S$, then $S$
    contains every $Q'\in \Delta$ such that $Q\subset Q'\subset Q(S)$.
    Third, if $Q\in S$, then either all the children of $Q$ lie in $S$
    or none of them do.
  \item The set of maximal cubes $Q(S)$, $S\in \cF$, satisfies a
    Carleson packing condition.
  \end{enumerate}
\end{defn}

A Carleson packing condition bounds the density of a set of
pseudocubes.  Specifically, we say that $\mathcal{A}\subset \Delta$
satisfies a \emph{Carleson packing condition} if there is a $c>0$ such
that for every $Q\in\Delta$,
\[\mathop{\sum_{Q'\in \cA}}_{Q'\subset Q} |Q'| \le c|Q|.\]
For example, for any $i$, $\Delta_i\subset \Delta$ satisfies a
Carleson packing condition, and if $x\in E$, then
\[\mathcal{A}_x=\{Q\in \Delta\mid x\in Q\}\]
satisfies a Carleson packing condition.

The term ``stopping-time region'' comes from the way that
coronizations are usually constructed.  Many coronizations are
constructed by finding ``good'' pseudocubes, then repeatedly
subdividing them until they stop being good.  The set of good
descendants of a particular pseudocube then forms a stopping-time
region.  In our case, stopping-time regions
correspond to parts of $E$ which are close to a Lipschitz graph.

\begin{defn}
  If $V$ is a subspace in $\R^N$,  $V^\perp$ is its orthogonal complement,
  and $h:V\to V^\perp$ is a Lipschitz function, we say that 
  \[\{x+h(x)\mid x\in V\}\]
  is the graph of $h$.  We call sets of this form \emph{Lipschitz
    graphs}.
\end{defn}

\begin{defn}\label{def:coronaDecomp}
  Let $E\subset \R^N$ be a $d$-dimensional Ahlfors regular set,
  equipped with a cubical patchwork $\Delta$.  We say that $E$ admits
  a \emph{corona decomposition} if for every $\eta, \theta>0$, there is a
  coronization $(\cB,\cG,\cF)$ of $E$ such that for each $S\in \cF$
  there exists a Lipschitz graph $\Gamma(S)$ with Lipschitz constant
  $\le \eta$ such that 
  \[d(x,\Gamma(S))\le \theta \diam Q\]
  for every $x\in E$ such that $d(x,Q)\le \diam Q$ and every $Q\in S$.
\end{defn}
Note that the constants in Carleson packing condition may depend on
$\eta$ and $\theta$.

David and Semmes proved that this property is equivalent to uniform rectifiability:
\begin{thm}[{\cite{DSAsterisque}}]\label{thm:coronaUR} 
  Suppose $E$ is a $d$-dimensional Ahlfors regular set in $\R^N$ with
  a cubical patchwork $\Delta$.  Then $E$ is uniformly rectifiable if
  and only if it admits a corona decomposition with respect to
  $\Delta$.  Furthermore, if $E$ is uniformly rectifiable, then the
  implicit constants of the corona decomposition depend only on
  $\eta$, $\theta$, $d$, $N$, the patchwork constants of $\Delta$, and
  the UR constant of $E$.
\end{thm}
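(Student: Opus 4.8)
Since Theorem~\ref{thm:coronaUR} is the David--Semmes equivalence, the plan is to prove the two implications separately, bridging them with one of the ``local flatness'' characterizations of uniform rectifiability. The characterization I would use is the Carleson bound on bilateral Jones $\beta$-numbers: writing $\beta_E(Q)$ for the scale-invariant two-sided distance from $E$, inside a fixed dilate of a pseudocube $Q$, to the best-approximating affine $d$-plane, uniform rectifiability of an Ahlfors $d$-regular $E$ is equivalent to the statement that for every $\epsilon>0$ the collection $\{Q\in\Delta : \beta_E(Q)>\epsilon\}$ satisfies a Carleson packing condition (equivalently, the bilateral weak geometric lemma). I would take this equivalence, and the fact that its constants depend only on $d$, $N$, the patchwork constants, and the UR constant, as known input, so that both halves of the theorem reduce to manipulating $\beta$-numbers and approximating planes.

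For the direction UR $\Rightarrow$ corona decomposition, fix target constants $\eta,\theta>0$ and pick a threshold $\epsilon=\epsilon(\eta,\theta,d,N)$ to be chosen small. Call a pseudocube \emph{flat} if $\beta_E(Q)\le\epsilon$, and fix for each flat $Q$ a best-approximating plane $P(Q)$. Declare $\cB$ to consist of the non-flat cubes together with flat cubes at which $P$ has already turned by more than a fixed small amount relative to the nearest ``top'' above; the remaining cubes form $\cG$. By the Carleson bound on non-flat cubes, together with a standard orthogonality estimate showing that the total angular variation of $P$ along a descending chain of flat cubes is itself controlled by the $\beta$-sum, $\cB$ satisfies a Carleson packing condition. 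I would then run the usual corona construction on $\cG$: start from a maximal good cube, descend through good cubes as long as $P$ has turned by at most a fixed multiple of $\eta$ in total, and stop otherwise; the good descendants collected this way form a coherent stopping-time region $S$, and these regions partition $\cG$. Inside $S$ the planes $P(Q)$ all make a small angle with $P(Q(S))$, so expressing $E$ as a graph over that common plane and interpolating between the $P(Q)$'s produces a Lipschitz graph $\Gamma(S)$ of constant $\le\eta$ which, thanks to $\beta_E(Q)\le\epsilon$, stays within $\theta\diam Q$ of $E$ throughout $S$ --- this is where one must trade $\epsilon$ against $\eta$ and $\theta$. The main obstacle is the last clause of Def.~\ref{def:coronization}, namely that the tops $\{Q(S):S\in\cF\}$ satisfy a Carleson packing condition: a new top is created either below a bad cube (controlled by the $\cB$ packing) or because $P$ turned too much along a good chain, and converting the bounded-total-turning-per-chain into a packing bound is the delicate, and well-known, heart of the argument.

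For the converse, corona decomposition $\Rightarrow$ UR, I would apply the hypothesis with $\eta,\theta$ small absolute constants and verify the big-pieces-of-Lipschitz-images definition directly. Given $x\in E$ and $0<r<\diam E$, let $Q_0\in\Delta$ be the pseudocube of size $\sim r$ containing $x$. If $Q_0$ lies in a stopping-time region $S$ whose top has $\diam Q(S)\gtrsim r$, then a definite fraction of $E\cap B(x,r)$ lies within $\theta r$ of the Lipschitz graph $\Gamma(S)$; projecting $\Gamma(S)$ orthogonally to its base plane gives the required Lipschitz map and, by Ahlfors regularity, the required lower bound on the size of the image. The pseudocubes for which no such $S$ exists are exactly those that are bad or whose containing region has a comparatively small top, so they are governed by the two Carleson packing conditions; the standard ``corona to BPLI'' step of David--Semmes then shows that this Carleson-smallness of the exceptional cubes forces the good behaviour to occur at a fixed positive proportion of scales, which upgrades the pointwise statement to genuine big pieces of Lipschitz images, i.e.\ uniform rectifiability. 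Tracking constants through the argument yields the asserted dependence on $d$, $N$, the patchwork constants, and the corona-decomposition constants.
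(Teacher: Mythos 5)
First, a point of orientation: the paper does not prove Theorem~\ref{thm:coronaUR} at all. It is quoted from David and Semmes \cite{DSAsterisque} and used as a black box, so there is no in-paper argument to compare yours against; what you have written is an attempted outline of the David--Semmes theorem itself. Judged on its own terms, the outline has the right general shape but breaks down at exactly the points where the theorem is hard. In the direction UR $\Rightarrow$ corona decomposition, you take as ``known input'' the equivalence of uniform rectifiability with the bilateral weak geometric lemma (Carleson packing of the cubes with $\beta_E(Q)>\epsilon$); that equivalence is a theorem of essentially the same depth, proved in the same memoir by arguments intertwined with the corona construction, so the reduction is close to circular. Worse, the input you chose is too weak for the construction you then run: the bilateral weak geometric lemma only bounds, in a Carleson sense, the \emph{number} of cubes with $\beta_E(Q)>\epsilon$; it gives no bound on the sum of the $\beta$'s (hence of the angles between successive best-approximating planes) along a descending chain of good cubes, so your ``standard orthogonality estimate'' controlling the total angular variation by the $\beta$-sum needs the strong geometric lemma (a Carleson measure estimate on $\beta(Q)^2\,|Q|$), which you did not assume. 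And the Carleson packing condition on the tops $Q(S)$ demanded by Definition~\ref{def:coronization} --- which you yourself identify as ``the delicate, and well-known, heart of the argument'' --- is simply not supplied; deferring it means the main quantitative content of this direction is missing.

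The converse direction also contains a step that fails as stated. The corona condition only guarantees that points of $E$ near cubes of $S$ lie within $\theta\diam Q$ of the Lipschitz graph $\Gamma(S)$; it does not put any part of $E$ \emph{on} $\Gamma(S)$, and $E$ may be disjoint from it. Hence a Lipschitz parametrization of $\Gamma(S)$ can have image meeting $E$ in a set of measure zero, and the BPLI inequality $|f(B_d(r))\cap E\cap B(x,r)|\gtrsim r^d$ does not follow from ``a definite fraction of $E\cap B(x,r)$ lies within $\theta r$ of $\Gamma(S)$.'' The actual David--Semmes route is to show that a corona decomposition yields big pieces of big pieces of Lipschitz graphs and then to invoke the nontrivial stability of uniform rectifiability under the big-pieces operation (via a criterion such as big projections plus the weak geometric lemma); neither step appears in your sketch beyond an appeal to ``the standard corona-to-BPLI step.'' So as a proof the proposal is an outline of the known strategy with the two hardest quantitative steps omitted and one incorrect deduction in the converse; for the purposes of this paper the correct move is what the author does, namely to cite \cite{DSAsterisque}.
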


If the patchwork $\Delta$ in Definition~\ref{def:coronization} or
\ref{def:coronaDecomp} is cellular, we call the resulting corona
decomposition or coronization a \emph{cellular corona decomposition}
or a \emph{cellular coronization}.  

\subsection{Quasiminimizing sets}\label{sec:QMprelims}

A quasiminimizing set, or quasiminimizer, is a set whose volume cannot
be reduced too much by a small deformation.  David and Semmes showed
that the solutions to many minimization problems are uniformly
rectifiable by showing that quasiminimizers are uniformly rectifiable
\cite{DSQuasi}.  We will state an abbreviated version of their
results; their results also apply to sets that are quasiminimizers
with respect to deformations inside some set $U$, but we will take
$U=\R^N$ throughout.

\begin{defn}\label{def:quasimin}
  Let $0<d<N$ be an integer.  If $\phi:\R^N\to \R^N$ is a Lipschitz
  map such that $\phi(x)=x$ for all $x$ outside some compact set, let
  $W=\{x\in \R^n\mid \phi(x)\ne x\}$.  We say that $\phi$ is a
  deformation of $\R^N$ supported on the set $\supp \phi=W\cup \phi(W)$.

  If $k\ge 1$ and $0<r\le \infty$ and $S\subset \R^N$ is a nonempty closed set
  with Hausdorff dimension $d$, we say that $S$ is a
  \emph{$(k,r)$-quasiminimizer} if:
  \begin{itemize}
  \item $|S\cap B|<\infty$ for every ball $B\subset \R^N$, and
  \item if $\phi$ is a deformation supported on a set of diameter $\le
    r$ and $W$ is as above, we have
    \[|S\cap W|\le k |\phi(S\cap W)|.\]
  \end{itemize}
\end{defn}

For example, a $d$-plane in $\R^N$ is a minimal surface and thus a
$(1,\infty)$\hyp{}quasiminimizer.  The unit sphere in $\R^N$ is
not an $(k,3)$-quasiminimizer for any $k$, since the map that
collapses the sphere to the origin can be extended to a deformation
supported on the ball of radius $1+\epsilon$.  It is, however, a
$(k,r)$-quasiminimizer for sufficiently large $k$ and
sufficiently small $r$, since a deformation of a small piece of
the sphere cannot reduce its volume very much.

For any set $S\subset \R^N$ of Hausdorff dimension $d$, we define
\[S^*=\{x\in \R^N\mid |S\cap B(x,r)|>0 \text{ for all }r>0\}.\]
David and Semmes proved:
\begin{thm}[{\cite[Thm.~2.11]{DSQuasi}}]\label{thm:quasisAreUR}
  Let $S$ be a $(k,r)$-quasiminimizer.  For each $x\in
  S^*$ and each $0<R<r$, there is a uniformly rectifiable,
  Ahlfors regular set $E$ of dimension $d$ such that
  \[S^*\cap B(x,R)\subset E \subset S^*\cap B(x,2R).\]
  The uniform rectifiability constants of $E$ can be taken to depend
  only on $N$ and $k$.
\end{thm}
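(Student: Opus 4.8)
The plan is to prove the theorem by showing that the measure-theoretic support $S^*$, intersected with $B(x,2R)$, is Ahlfors $d$-regular and satisfies one of the David--Semmes characterizations of uniform rectifiability --- concretely, the corona decomposition of Theorem~\ref{thm:coronaUR}, equivalently the bilateral weak geometric lemma --- with constants depending only on $N$ and $k$; the set $E$ is then essentially $S^*\cap\overline{B(x,2R)}$. Throughout I would only invoke quasiminimality at scales $<r$, which is exactly what is available, and I would use \cite{DSAnalysis} for the equivalences among the various UR criteria.

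\emph{Step 1: Ahlfors regularity of $S^*$ at small scales.} I would first show $\cH^d(S^*\cap B(y,\rho))\sim\rho^d$ for $y\in S^*$ and $\rho<r$, uniformly in $N$ and $k$. The lower bound is the easier half: for a.e.\ $\rho$ the slice $S\cap\partial B(y,\rho)$ is $(d-1)$-rectifiable and, by the coarea formula, its $\cH^{d-1}$-measure is the $\rho$-derivative of $g(\rho)=\cH^d(S\cap B(y,\rho))$; replacing $S\cap B(y,\rho)$ by the cone over that slice with vertex $y$ is a deformation supported in $\overline{B(y,\rho)}$ whose image has measure $\lesssim\rho\,g'(\rho)$, so $(k,r)$-quasiminimality gives the differential inequality $g(\rho)\lesssim_k\rho\,g'(\rho)$, which together with $g(\rho)>0$ for all $\rho>0$ (the very definition of $S^*$) forces a power-type lower bound; this is sharpened to the exponent $d$ once the flatness of Step~2 is in hand. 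The upper bound $g(\rho)\lesssim\rho^d$ is already a genuine theorem --- a large fan of half-$d$-planes through a common spine is not ruled out by cone competitors alone --- and is proved in \cite{DSQuasi} by a competitor that redistributes the mass of $S\cap B(y,\rho)$ onto a controlled $d$-dimensional scaffold inside the ball; I would import that argument. Intersecting with the vastly larger ball $B(x,2R)$ only costs a constant, since at most scales $S^*$ is close to a $d$-plane through $y$ a definite fraction of which remains inside $B(x,2R)$.

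\emph{Step 2: flatness at most scales --- the crux.} The key quantitative input is a dichotomy at each ball $B(y,\rho)$ with $y\in S^*$, $\rho<r$: \emph{either} $S\cap B(y,\rho)$ lies within bilateral Hausdorff distance $\varepsilon\rho$ of some $d$-plane through $y$, \emph{or} one can exhibit a competitor --- a Lipschitz self-map of $\R^N$ supported in $B(y,\rho)$, obtained by projecting $S$ onto a nearby $d$-plane and then pruning the image against that plane --- which decreases $\cH^d(S\cap B(y,\rho))$ by a definite amount $\delta(\varepsilon)\rho^d$. Combined with an almost-monotonicity of the density ratio $\theta(y,\rho)=\omega_d^{-1}\rho^{-d}g(\rho)$ (again from cone competitors) and the two-sided bound on $\theta$ from Step~1, each such drop can happen only boundedly often along any chain of nested balls, so the non-flat balls satisfy a Carleson packing condition. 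That is precisely the bilateral weak geometric lemma for $S^*$, hence (for an Ahlfors regular set) $S^*$ admits a corona decomposition and is uniformly rectifiable, with constants controlled by $N$ and $k$. I expect this step to be the main obstacle: the construction and mass bookkeeping of the project-and-prune competitor in codimension $N-d>1$ is delicate --- there is no separation property to lean on as when $d=N-1$, and one must keep the map a genuine deformation of all of $\R^N$ (not merely of $S$) while bounding the measure of its image on the part of $S$ it moves. This is the technical core of \cite{DSQuasi}.

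\emph{Step 3: producing $E$.} Set $E=S^*\cap\overline{B(x,2R)}$. By Step~1 it is Ahlfors $d$-regular at all scales $\lesssim R$, and the bilateral weak geometric lemma of Step~2 is a scale-invariant local property inherited by the restriction to $B(x,2R)$, so $E$ is uniformly $d$-rectifiable with the required dependence of constants. The inclusions $S^*\cap B(x,R)\subset E\subset S^*\cap B(x,2R)$ are immediate. If one wants honest Ahlfors regularity all the way to $\partial B(x,2R)$, a routine stopping-time trimming of the boundedly overlapping low-density balls in a thin collar just inside the sphere --- which stays inside $S^*\cap B(x,2R)$ and still contains $S^*\cap B(x,R)$ --- removes the only possible defect and leaves the criterion of Step~2 untouched.
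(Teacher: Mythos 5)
The paper does not actually prove this statement: it is quoted from \cite[Thm.~2.11]{DSQuasi}, and the only place the argument is rehearsed is in the proof of Proposition~\ref{prop:QMtoUR}, where the David--Semmes route is followed --- local Ahlfors regularity via Federer--Fleming deformation onto the $d$-skeleton of a grid, then a Lipschitz map $h:\R^N\to\R^d$ with $|h(S\cap Q)|\gtrsim(\diam Q)^d$ (big projections, \cite[Prop.~5.1]{DSQuasi}), and finally big pieces of bilipschitz images, obtained by showing that if no such piece exists one can collapse $S'=\gamma(S)$ by a deformation whose image has measure $\lesssim K^{d-1}$, i.e.\ smaller than $|S'\cap Q|\gtrsim K^d$ by a factor $K$ that may be taken $\gg k$. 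Your proposal takes a different route (bilateral weak geometric lemma, hence corona decomposition, hence UR), which would indeed suffice if it could be carried out, but as written it has a genuine gap.

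The gap is in Step~2 and in the monotonicity claims feeding it. A $(k,r)$-quasiminimizer is only required to satisfy $|\phi(S\cap W)|\ge |S\cap W|/k$; it is perfectly free to admit deformations that decrease its measure by a definite fraction, as long as that fraction is at most $1-1/k$. So your dichotomy ``either $B(y,\rho)$ is $\varepsilon\rho$-flat or some competitor decreases $\Haus^d(S\cap B(y,\rho))$ by $\delta(\varepsilon)\rho^d$'' yields no contradiction and no Carleson packing condition: when $k$ is large, every ball could be of the second type. This is exactly why David and Semmes do not argue this way; their collapsing competitor reduces the measure by a factor comparable to $K$, which is then chosen larger than $k$ (see the estimate $|\phi(S'\cap B(Q,cr_0))|\lesssim c\,|S'\cap Q|/K$ reproduced in the proof of Proposition~\ref{prop:QMtoUR}). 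The auxiliary tools you invoke are likewise unavailable: coning $S\cap B(y,\rho)$ over its slice is not of the form $\phi(S\cap W)$ for a Lipschitz deformation $\phi$ of $\R^N$, so quasiminimality says nothing about it, and in any case no (almost-)monotonicity of $\rho^{-d}\Haus^d(S\cap B(y,\rho))$ can survive a multiplicative constant $k>1$ in the defining inequality --- the density ratio of a quasiminimizer may oscillate by factors of order $k$ at every scale, so ``each drop happens only boundedly often along a chain of nested balls'' is unsupported. (A smaller issue: $E=S^*\cap\overline{B(x,2R)}$ generally fails lower Ahlfors regularity near the bounding sphere, which is precisely why the theorem asserts the existence of an intermediate set $E$ rather than taking the intersection; your ``routine trimming'' is doing real work there.) To repair Step~2 you would have to replace the definite-drop dichotomy by a construction that, at a bad location, beats the factor $k$ --- which is the big-projection/collapse mechanism of \cite{DSQuasi}.
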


\begin{defn}\label{def:locallyUR}
  If a set $S\subset \R^N$ has $S=S^*$ and satisfies the conclusion of
Theorem~\ref{thm:quasisAreUR}, we say that it is \emph{locally uniformly
rectifiable}.  That is, if for every $x\in S$ and $R<r$, there is a
compact, Ahlfors regular set $E$ of dimension $d$ such that
\[S\cap B(x,R)\subset E \subset S\cap B(x,2R)\] and $E$ is uniformly
rectifiable with regularity and uniform rectifiability constants
bounded by $\epsilon$, we say that $S$ is \emph{$(r,\epsilon)$-locally
  UR}.
\end{defn}

David and Semmes proved that this definition is equivalent to a local version of the BPLI property.
\begin{lemma}[{\cite[Chap.\ 10]{DSQuasi}}]\label{lem:DS-localUR-localBPLI}
  Let $\epsilon>0$.  There is an $\epsilon'>0$ such that if $r>0$ and
  $E$ is $(r,\epsilon)$-locally UR, then $E$ is locally Ahlfors regular and locally satisfies BPLI.  That is, 
  for any $x\in E$ and $0<R<r$, 
  \[\epsilon' R^d \le |E\cap B(x,R)|\le R^d/\epsilon'\]
  and there is a $(\epsilon')^{-1}$-Lipschitz map $f:B_d(R)\to \R^n$
  such that 
  \[|f(B_d(R))\cap E \cap B(x,R)|\ge \epsilon' R^d.\]

  Conversely, for any $\epsilon'>0$, $r>0$, and $E\subset \R^N$ which satisfy the conditions above, there is an $\epsilon>0$ depending
  on $\epsilon'>0$ and $N$ such that $E$ is $(r,\epsilon)$-locally UR.
\end{lemma}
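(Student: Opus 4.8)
We prove the two implications separately. The first is a routine transfer of structure from a nearby global model; the second is the substantive one and is the content of \cite[Chap.~10]{DSQuasi}, so we will only spell out the reduction and then quote that reference.

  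For the forward implication, assume $E$ is $(r,\epsilon)$-locally UR and fix $x\in E$ and $0<R<r$. Applying the hypothesis at the pair $(x,R)$ produces a compact, Ahlfors $d$-regular, uniformly rectifiable set $E'$ with regularity and UR constants at most $\epsilon$ and with $E\cap B(x,R)\subset E'\subset E\cap B(x,2R)$; in particular $x\in E'$. The upper bound $|E\cap B(x,R)|\le R^d/\epsilon'$ then follows from $|E\cap B(x,R)|\le |E'|$ together with Ahlfors regularity of $E'$ at its own point $x$ (using $E'\subset B(x,2R)$). For the lower bound, $E'\cap B(x,R)\subset E\cap B(x,R)$, so $|E\cap B(x,R)|\ge |E'\cap B(x,R)|$ and the lower Ahlfors estimate for $E'$ at $x$ transfers. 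For BPLI: since $E'$ is uniformly rectifiable it has big pieces of Lipschitz images, so there is an $\epsilon$-Lipschitz map $g:B_d(R/2)\to\R^N$ with $|g(B_d(R/2))\cap E'\cap B(x,R/2)|\gtrsim_\epsilon R^d$; because $E'\cap B(x,R/2)\subset E\cap B(x,R)$, rescaling the domain of $g$ from $B_d(R/2)$ to $B_d(R)$ gives the required $(\epsilon')^{-1}$-Lipschitz map. Taking $\epsilon'$ to be the minimum of the finitely many constants produced proves the implication; the only bookkeeping is at scales where the model $E'$ degenerates to diameter much smaller than $R$, which is handled as in \cite{DSQuasi}.

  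For the converse, assume $E$ satisfies the local Ahlfors bounds and local BPLI with constant $\epsilon'$, and fix $x\in E$ and $0<R<r$. We must produce a compact, Ahlfors regular, uniformly rectifiable set $E'$ with $E\cap B(x,R)\subset E'\subset E\cap B(x,2R)$ and constants bounded in terms of $\epsilon'$ and $N$. The candidate is a truncation $E'=E\cap \overline{B}(x,\rho)$ for a well-chosen radius $\rho\in[R,2R)$: this set is compact, and its upper Ahlfors regularity is inherited from that of $E$. Since Ahlfors regularity together with big pieces of Lipschitz images is exactly one of the equivalent definitions of uniform rectifiability, it remains to check that $E'$ is Ahlfors regular from below and satisfies BPLI at all scales. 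For points of $E'$ lying well inside $B(x,\rho)$ both are immediate from the local properties of $E$. The difficulty is entirely near the truncating sphere, so one first chooses $\rho$ — by a Chebyshev argument against the upper Ahlfors bound, averaging over $\rho\in[R,2R)$ — so that $E$ does not concentrate in a thin shell around $\partial B(x,\rho)$.

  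The hard part is completing this last step: controlling the truncation near $\partial B(x,\rho)$ at scales small compared with $R$, where the shell estimate by itself is too weak to deliver lower Ahlfors regularity or big Lipschitz pieces. This is precisely the carrot/corona machinery of \cite[Chap.~10]{DSQuasi}: local BPLI is first upgraded to the existence of big Lipschitz \emph{graphs} inside $E$, these are organized into a corona decomposition of the truncated set, and Theorem~\ref{thm:coronaUR} is then invoked to conclude that the truncation is uniformly rectifiable. We therefore take the converse implication from \cite{DSQuasi}, having recorded above only the reduction to this statement about truncations.
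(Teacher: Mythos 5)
The paper offers no proof of this lemma at all --- it is quoted directly from \cite[Chap.~10]{DSQuasi} --- so there is no internal argument to compare against; your proposal should be judged on its own. Your forward direction is correct and is the routine argument one would expect: sandwich $E\cap B(x,R)\subset E'\subset E\cap B(x,2R)$, transfer the upper bound via $|E'|\lesssim (\diam E')^d\lesssim R^d$, the lower bound and the big Lipschitz piece via $E'\cap B(x,R/2)\subset E\cap B(x,R)$ and a dilation of the domain ball; the only caveat is the degenerate case $\diam E'\ll R$ (where the lemma's lower bounds are really a matter of the David--Semmes convention that regularity is only demanded below the diameter), which you acknowledge.

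For the converse, you ultimately quote \cite{DSQuasi}, which is consistent with what the paper does, but the reduction you record is not quite right and is worth correcting. First, note that Definition~\ref{def:locallyUR} forces the model set to satisfy $E\cap B(x,R)\subset E'\subset E\cap B(x,2R)$, so $E'$ must be a \emph{subset} of $E$; one cannot later repair it by gluing in an auxiliary $d$-plane or sphere, which is the standard trick for regularizing truncations. Second, the bare truncation $E'=E\cap\overline{B}(x,\rho)$ is in general not Ahlfors regular from below near the cutting sphere, and a Chebyshev choice of $\rho\in[R,2R)$ does not fix this: the shell estimate controls how much of $E$ sits in a thin annulus, but the failure mode is a piece of $E$ meeting $\overline{B}(x,\rho)$ nearly tangentially from outside, leaving inside the ball only an isolated point or a cap of measure far below $t^d$ at scales $t$ comparable to that piece. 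Handling this requires the genuinely different, stopping-time style localization of \cite{DSQuasi}, not a cleverly chosen radius. Since you defer to that reference for exactly this step, the proposal is acceptable as a citation-level proof, but the text should not suggest that the metric truncation itself, after averaging in $\rho$, is the set that works.
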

\begin{cor}\label{cor:localURunions}
  For every $\epsilon>0$, there is an $\epsilon'>0$ such that a union
  of two $(r,\epsilon)$-locally UR sets is $(r,\epsilon')$-locally UR.
\end{cor}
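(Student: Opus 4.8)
The plan is to deduce Corollary~\ref{cor:localURunions} from the two directions of Lemma~\ref{lem:DS-localUR-localBPLI}, using the local BPLI property as an intermediate that behaves well under unions. First I would fix $\epsilon>0$ and invoke the first half of Lemma~\ref{lem:DS-localUR-localBPLI} to produce $\epsilon_1>0$ such that any $(r,\epsilon)$-locally UR set is locally Ahlfors regular and locally satisfies BPLI with constant $\epsilon_1$. So given two $(r,\epsilon)$-locally UR sets $E_1$ and $E_2$, both enjoy these bounds with the same $\epsilon_1$.

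Next I would show that $E=E_1\cup E_2$ is locally Ahlfors regular and locally BPLI with some constant $\epsilon_2$ depending only on $\epsilon_1$, $d$, and $N$. For the upper regularity bound this is immediate: $|E\cap B(x,R)|\le |E_1\cap B(x,R)|+|E_2\cap B(x,R)|\le 2R^d/\epsilon_1$. For the lower bound and for BPLI, the point is that for any $x\in E$ we have $x\in E_j$ for some $j\in\{1,2\}$, and then the lower regularity bound and the Lipschitz map $f:B_d(R)\to\R^n$ witnessing BPLI for $E_j$ at $x$ also work for $E$: since $E_j\subset E$, we get $|f(B_d(R))\cap E\cap B(x,R)|\ge |f(B_d(R))\cap E_j\cap B(x,R)|\ge \epsilon_1 R^d$, and likewise $|E\cap B(x,R)|\ge |E_j\cap B(x,R)|\ge \epsilon_1 R^d$. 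Thus $E$ satisfies the hypotheses of the converse direction of Lemma~\ref{lem:DS-localUR-localBPLI} with $\epsilon'=\epsilon_1/2$ (to absorb the factor $2$ in the upper bound).

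Finally I would apply the converse direction of Lemma~\ref{lem:DS-localUR-localBPLI}: it furnishes an $\epsilon''>0$, depending only on $\epsilon_1/2$ and $N$, hence ultimately only on $\epsilon$ and $N$, such that $E$ is $(r,\epsilon'')$-locally UR. Setting $\epsilon'=\epsilon''$ completes the proof, and since a finite union can be built up two at a time, the statement extends to any fixed finite number of sets (with the constant degrading accordingly).

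The only mildly delicate point is the bookkeeping of the bad constant in the upper Ahlfors bound for the union — we pick up a factor that depends on the number of sets being merged — but this is harmless for two sets and, more generally, the paper only ever needs to merge a bounded number of locally UR pieces; no step here is a genuine obstacle, as both directions of Lemma~\ref{lem:DS-localUR-localBPLI} are quoted from David--Semmes.
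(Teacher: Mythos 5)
Your proposal is correct and is essentially the argument the paper intends: Corollary~\ref{cor:localURunions} is stated as an immediate consequence of Lemma~\ref{lem:DS-localUR-localBPLI}, and your route---pass to local Ahlfors regularity and local BPLI via the forward direction, observe that both conditions persist for $E_1\cup E_2$ (with only a factor of $2$ loss in the upper regularity bound, the lower bound and the BPLI witness being inherited from whichever $E_j$ contains the point), then apply the converse direction---is exactly that argument with the constants tracked. No gaps.
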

\begin{cor}\label{cor:rURto2rUR}
  For every $\epsilon>0$, there is an $\epsilon'>0$ depending on
  $\epsilon$ and $N$ such that if $S$ is $(r,\epsilon)$-locally UR, then 
  it is $(2r,\epsilon')$-locally UR.
\end{cor}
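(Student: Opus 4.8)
The plan is to reduce the statement to the hypothesis by a covering argument: the only radii $R$ not already covered by the $(r,\epsilon)$-local UR assumption are those with $r\le R<2r$, and for such $R$ I will construct the set required by Definition~\ref{def:locallyUR} as a union of boundedly many sets furnished by the hypothesis.

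Fix $x\in S$ and $0<R<2r$. If $R<r$ the hypothesis gives the desired set directly (with constant $\epsilon$), so assume $r\le R<2r$ and put $s=R/2$, so that $r/2\le s<r$. First I would invoke Lemma~\ref{lem:DS-localUR-localBPLI} to conclude that $S$ is locally Ahlfors $d$-regular at scales below $r$, with constant depending only on $\epsilon$ and $N$. Let $\{y_1,\dots,y_m\}\subset S\cap B(x,R)$ be a maximal $s$-separated subset. A routine packing estimate bounds $m$: the balls $B(y_i,s/2)$ are disjoint, each satisfies $|S\cap B(y_i,s/2)|\gtrsim_\epsilon s^d$ by local regularity (valid since $s/2<r$ and $y_i\in S$), and they all lie in $B(x,3r)$, which has $|S\cap B(x,3r)|\lesssim_{\epsilon,N}r^d$ after covering it by boundedly many balls of radius $r/2$. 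Hence $m$ is bounded by a constant depending only on $\epsilon$ and $N$, and by maximality $S\cap B(x,R)\subset\bigcup_{i=1}^m B(y_i,s)$.

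For each $i$, since $y_i\in S$ and $s<r$, the hypothesis supplies a compact Ahlfors $d$-regular set $E_i$ with $S\cap B(y_i,s)\subset E_i\subset S\cap B(y_i,2s)$ whose regularity and UR constants are at most $\epsilon$. Set $E=\bigcup_{i=1}^m E_i$. Then $E$ is compact, $S\cap B(x,R)\subset E$, and since $|y_i-x|\le R$ and $2s=R$ we get $E\subset\bigcup_i S\cap B(y_i,R)\subset S\cap B(x,2R)$. It remains to check that $E$ is uniformly $d$-rectifiable with constants controlled by $\epsilon$ and $N$. Here I would first record that every $E_i$, and $E$ itself, has diameter comparable to $R$ with constants depending only on $\epsilon$: the upper bound is immediate from the containments, and $\diam E_i\gtrsim_\epsilon R$ follows from $|E_i|\ge|S\cap B(y_i,s)|\gtrsim_\epsilon s^d$ together with the upper Ahlfors bound $|E_i|\le\epsilon(\diam E_i)^d$. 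Granting this, a union of boundedly many Ahlfors $d$-regular, uniformly rectifiable sets of mutually comparable diameter is again Ahlfors $d$-regular and uniformly rectifiable with controlled constants; one checks Ahlfors regularity of $E$ by summing the contributions of the $E_i$ (re-centering each ball inside the relevant $E_i$), and the big-pieces-of-Lipschitz-images property at a point $y\in E_j$ by transporting the Lipschitz image witnessing BPLI for $E_j$, rescaled by a bounded factor when the scale $\rho$ is within a bounded factor of $\diam E$. Thus $E$ is UR with constant $\epsilon'=\epsilon'(\epsilon,N)$, and since $x$ and $R$ were arbitrary, $S$ is $(2r,\epsilon')$-locally UR.

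The main obstacle is precisely this last step: verifying that the finite union $E$ inherits Ahlfors regularity and the big-pieces property with constants depending only on $\epsilon$ and $N$. Two features of the construction are what make it work: the number $m$ of pieces is bounded (so the constants cannot degrade uncontrollably), which is why the packing estimate is carried out first; and the pieces have comparable diameters, which lets the regularity and BPLI bounds for each $E_i$ be promoted from scales $\rho<\diam E_i$ to all scales $0<\rho<\diam E$.
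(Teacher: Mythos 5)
Your argument is correct, but it runs along a different track than the one the paper intends. The paper states this as an immediate corollary of Lemma~\ref{lem:DS-localUR-localBPLI}, used in both directions: if $S$ is $(r,\epsilon)$-locally UR, the forward direction gives local Ahlfors regularity and local BPLI at all scales $R<r$; for $r\le R<2r$ these two conditions follow from the corresponding conditions at scale $R/2$ with only a bounded (factor $\sim 2^d$) loss in the constants (cover $B(x,R)\cap S$ by boundedly many balls of radius $R/2$ recentered on $S$ for the upper regularity bound, and note that a big Lipschitz piece of $S\cap B(x,R/2)$ is already a big Lipschitz piece of $S\cap B(x,R)$); the converse direction of the lemma then converts local regularity plus local BPLI at scale $2r$ back into the $(2r,\epsilon')$-locally UR statement. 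You instead work directly with Definition~\ref{def:locallyUR}: you build the witness set $E$ for a radius $R\in[r,2r)$ as a union of boundedly many half-scale witnesses $E_i$, and verify Ahlfors regularity and BPLI of the union by hand, using the bounded multiplicity and the comparability $\diam E_i\sim_\epsilon R$ (which you correctly extract from the lower mass bound and upper regularity of $E_i$). This is sound, and it has the minor virtue of needing only the forward direction of Lemma~\ref{lem:DS-localUR-localBPLI} (and even that only for the packing count, which you could avoid entirely by covering the ambient ball $B(x,R)$ by $C(N)$ balls of radius $R/4$ and recentering those meeting $S$); the price is that you must re-prove that a bounded union of comparable-diameter regular UR sets is regular and UR, which is exactly the kind of bookkeeping the lemma-based route sidesteps, since at the level of the local regularity/BPLI conditions the passage from scale $r$ to $2r$ is a one-line rescaling.
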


The definition of quasiminimizer in Def.~\ref{def:quasimin} is
slightly too strong for our purposes.  The main problem is that if $S$
is not a quasiminimizer, we know that there is a deformation $\phi$
that decreases the measure of $S$, but we have no control over $\phi$.
We thus define a slightly weaker notion.
\begin{defn}\label{def:weakQuasi}
  Let $k>0$, $r>0$, and $\epsilon>0$.  Let $S\subset \R^N$ be a
  set such that
  \begin{equation}\label{eq:finitenessCond}
    \text{$|B(x,r)\cap S|<\infty$ for every $x\in S$ and $r>0$}
  \end{equation}
  and $S=S^*$.  We say that $h:\R^N\to \R^N$ is an
  \emph{$\epsilon$-padded deformation on $W$} if $W\subset \R^N$ is a
  bounded open set such that $\supp h\subset \Core_\epsilon{W}$, where
  \[\Core_\epsilon{W}=\{x\in W\mid d(x,\partial W) \ge \epsilon \diam W\}.\]

  We say that $S$ is a \emph{$(k,\epsilon,r)$-weak quasiminimizer} if
  for every $W\subset \R^N$ with $\diam W\le r$ and
  \[|S\cap W|\ge \epsilon (\diam W)^d\]
  and every $\epsilon$-padded deformation $h$ on $W$, we have
  \begin{equation}\label{eq:weakQuasi}
    |h(S\cap W)|\ge |S\cap W|/k.
  \end{equation}
\end{defn}
Note that if $0<\epsilon'<\epsilon$, then any $(k,\epsilon',r)$-weak
quasiminimizer is a $(k,\epsilon,r)$-weak quasiminimizer.
The main difference between $(k,\epsilon,r)$-weak quasiminimizers and
$(k,r)$-quasiminimizers is that if $|S\cap (W\setminus \Core_\epsilon{W})|$ is
large, we might have
\[|h(S\cap \Core_\epsilon{W})|\le |S\cap \Core_\epsilon{W}|/k\]
but not
\[|h(S\cap W)|\le |S\cap W|/k.\]

Nevertheless, a version of Theorem~\ref{thm:quasisAreUR} holds for
small $\epsilon$.  We will follow the proof of
Theorem~\ref{thm:quasisAreUR} to show the following result:
\begin{prop}\label{prop:QMtoUR}
  For any $k>1$, there are $\epsilon,\epsilon'>0$ such that for any
  $r>0$, any $(k,\epsilon,r)$-weak quasiminimizer is
  $(r,\epsilon')$-locally UR. 
\end{prop}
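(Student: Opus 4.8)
The plan is to follow the proof of Theorem~\ref{thm:quasisAreUR} from \cite{DSQuasi}, tracking the places where the quasiminimizer hypothesis is actually used and checking that the weaker $(k,\epsilon,r)$-weak quasiminimizer hypothesis suffices once $\epsilon$ is taken small enough. The strategy of David and Semmes is to show that a quasiminimizer satisfies the key geometric regularity conditions that characterize uniform rectifiability: first local Ahlfors regularity (upper and lower volume bounds on balls centered on the set), and then a condition like the ``bilateral weak geometric lemma'' or the existence of big projections / big pieces of Lipschitz graphs. Each of these is proved by a local argument: one assumes the relevant estimate fails on some ball $B(x,R)$ with $R<r$, and then constructs an explicit deformation $\phi$ supported near $B(x,R)$ that decreases $|S|$ by more than a factor of $k$, contradicting quasiminimality. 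The point is that all of these deformations are supported well inside a ball --- typically a radial contraction, a cone construction, or a projection to a hyperplane, composed with a cutoff near the boundary of the ball --- so they are $\epsilon$-padded for any fixed small $\epsilon$, and they only need to be applied on balls $B(x,R)$ where $|S\cap B(x,R)|$ is comparable to $R^d$, which is exactly the regime the weak quasiminimizer definition addresses.

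Concretely, I would carry out the following steps. First, establish local Ahlfors regularity: the upper bound $|S\cap B(x,R)|\lesssim R^d$ and the lower bound $|S\cap B(x,R)|\gtrsim R^d$ for $x\in S$, $R<r$. The upper bound comes from a projection/covering argument as in \cite{DSQuasi}, and the lower bound is proved by noting that if $|S\cap B(x,R)|$ were too small relative to $R^d$, a deformation collapsing $S\cap B(x,R/2)$ radially toward $x$ (cut off near $\partial B(x,R)$) would reduce the measure too much; one must check this deformation is $\epsilon$-padded and that the ball on which it operates satisfies the density hypothesis $|S\cap W|\ge \epsilon(\diam W)^d$ of Def.~\ref{def:weakQuasi} --- which is automatic once $R$ is at the scale where we are trying to prove the lower bound, since at the top of the stopping time we still have some mass. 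Second, with local Ahlfors regularity in hand, invoke Theorem~\ref{thm:quasisAreUR}'s proof machinery: run the David--Semmes argument producing, for each $x\in S^*$ and $R<r$, an Ahlfors regular set $E$ with $S\cap B(x,R)\subset E\subset S\cap B(x,2R)$ that admits a corona decomposition (equivalently, has big pieces of Lipschitz graphs), using the fact that failure of the relevant Carleson packing / approximation-by-planes condition again yields a contraction deformation violating weak quasiminimality. Third, translate this into the statement that $S$ is $(r,\epsilon')$-locally UR by appealing to Definition~\ref{def:locallyUR} and Lemma~\ref{lem:DS-localUR-localBPLI}, choosing $\epsilon'$ from the uniform rectifiability constants produced, which depend only on $k$ and $N$.

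The main obstacle I expect is the bookkeeping around the ``padding'' and the density threshold: in the original argument the deformations are sometimes supported on a ball $B(x,R)$ but act nontrivially all the way out to $\partial B(x,R)$, whereas an $\epsilon$-padded deformation must be supported in $W_0=\{y\in W\mid d(y,\partial W)\ge \epsilon\diam W\}$. The fix is standard --- enlarge the ambient ball by a factor of $(1-\epsilon)^{-1}$ or so, run the deformation on the slightly smaller concentric ball, and absorb the loss into the constant $k$ --- but one has to verify that the measure decrease on the smaller ball is still by a definite factor, i.e. that $|S\cap(W\setminus W_0)|$ is not so large as to swamp the gain, which is exactly why the definition of weak quasiminimizer only demands $|h(S\cap W)|\ge |S\cap W|/k$ rather than the sharper $|h(S\cap W_0)|\ge|S\cap W_0|/k$. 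Keeping these constants consistent across the (upper regularity) $\to$ (lower regularity) $\to$ (BPLG) chain, so that a single pair $(\epsilon,\epsilon')$ depending only on $k$ works, is the part that requires care; the geometric content of each individual deformation is already in \cite{DSQuasi}. I would also need to double-check the hypothesis \eqref{eq:finitenessCond} and $S=S^*$ are used exactly where David--Semmes use ``$|S\cap B|<\infty$'' and the passage to $S^*$, which is routine.
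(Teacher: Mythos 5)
Your plan is essentially the paper's proof: it isolates the places where David--Semmes use quasiminimality (local Ahlfors regularity, the Lipschitz map to $\R^d$ with big image, and the big-pieces-of-bilipschitz-images step) and verifies that each deformation can be taken $\epsilon$-padded on a ball with definite density, absorbing the loss near the boundary into the constant $k$ --- exactly the route the paper follows via Lemma~\ref{lem:quasiImpliesRegular}, Proposition~\ref{prop:DSprojection}, and the padded version of the Chapter~9 deformation of \cite{DSQuasi}. The one step you leave schematic, avoiding circularity in the lower Ahlfors bound where the density hypothesis $|S\cap W|\ge\epsilon(\diam W)^d$ is what you are trying to establish, is handled in the paper along the lines you gesture at: start at a point satisfying the lower density estimate \eqref{eq:lowerDensity} and propagate the bound upward by induction through nested cubes, since the David--Semmes argument with shrinking buffer zones is not available for weak quasiminimizers.
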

David and Semmes use the quasiminimizing condition in three places in
the proof of Theorem~\ref{thm:quasisAreUR}: to ensure Ahlfors
regularity, to construct a Lipschitz map from a subset of $S$ to
$\R^d$ whose image has positive measure, and to show that the map is
in fact bilipschitz on part of $S$.  We claim that if $\epsilon$ is
sufficiently small, then in all three cases, the deformations that
they use can be chosen to be $\epsilon$-padded, perhaps with a slight
loss in the constants.

First, we prove that a weak quasiminimizer is locally Ahlfors regular.
This is proved for quasiminimizers in Lemma~4.1 of \cite{DSQuasi}.
David and Semmes use a sequence of candidate deformations with smaller
and smaller ``buffer zones'' in their proof, so we need a slightly
different argument to prove that weak quasiminimizers are locally
Ahlfors regular.  In the rest of this section, all implicit constants
will be taken to depend on $d$, $N$, and $k$.

\begin{lemma}\label{lem:quasiImpliesRegular}
  For any $k>1$, there is an $\epsilon>0$ such that for any $r>0$, any
  $(k,\epsilon,r)$-weak quasiminimizer $S$ is locally Ahlfors regular.  That is,
  \[|B(x,R)\cap S|\sim R^d\]
  for all $x\in S$ and $0<R<r$.
\end{lemma}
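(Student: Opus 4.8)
The plan is to establish the upper and lower Ahlfors bounds separately, both by comparison with a $d$-plane. For the \emph{upper bound} $|B(x,R)\cap S|\lesssim R^d$, I would fix $x\in S$ and $0<R<r$, consider the ball $W=B(x,R)$, and apply the weak quasiminimizing hypothesis to a deformation $h$ that projects $S\cap W_0$ onto a flat $d$-disc through $x$ (a radial projection onto a $d$-plane, followed by the identity outside a slightly larger ball) where $W_0=\{y\in W\mid d(y,\partial W)\ge \epsilon\diam W\}$. Such a projection has image of measure $\lesssim R^d$, so if $|S\cap W|\ge \epsilon(\diam W)^d$ we get $|S\cap W|\le k|h(S\cap W)|\lesssim R^d$; and if $|S\cap W|<\epsilon(\diam W)^d$ the bound holds trivially. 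To make $h$ genuinely $\epsilon$-padded I must be careful that the projection is supported inside $W_0$; David and Semmes's construction in Lemma~4.1 of \cite{DSQuasi} uses nested buffer zones that shrink, which is exactly the feature that fails here, so I would instead fix one buffer of width $\epsilon\diam W$ once and for all and check that the resulting (slightly lossier) constant is still finite.

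For the \emph{lower bound} $|B(x,R)\cap S|\gtrsim R^d$, the idea is the standard one: if $S$ were too thin inside $B(x,R)$, then a deformation collapsing $S\cap B(x,\rho)$ onto $\partial B(x,\rho)$ for a suitable $\rho\sim R$ would decrease the measure by more than a factor of $k$, contradicting \eqref{eq:weakQuasi}. More precisely, since $x\in S=S^*$ we have $|S\cap B(x,\rho)|>0$ for all $\rho>0$; one shows that the function $\rho\mapsto |S\cap B(x,\rho)|$ must grow at a definite rate, because at any scale $\rho$ where $|S\cap B(x,\rho)|$ is much smaller than $\rho^d$ (hence also much smaller than a dimensional constant times the measure of $\partial B(x,\rho)\cap S$ would be under regularity), the cone-type deformation pushing $S\cap B(x,\rho/2)$ radially to $\partial B(x,\rho)$ within $W=B(x,\rho+\delta)$ is $\epsilon$-padded once $\delta$ is a small fixed multiple of $\rho$, is supported where $|S\cap W|$ is not too small (using that we are at a scale where $S$ is present), and reduces measure by a factor $>k$. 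Iterating this across dyadic scales from a tiny one up to $R$ forces $|S\cap B(x,R)|\gtrsim R^d$. The usual integral/differential inequality argument ($\Phi(\rho)=|S\cap B(x,\rho)|$ satisfies $\Phi(\rho)\le C\rho \Phi'(\rho)$ in an averaged sense, giving $\Phi(\rho)\gtrsim \rho^d$) packages this cleanly.

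The main obstacle is precisely the $\epsilon$-padding requirement: in the quasiminimizer setting one is free to choose arbitrarily thin collars around the region being deformed, whereas here the collar width must be a \emph{fixed} fraction $\epsilon\diam W$ of the ball, and moreover the hypothesis only applies when $|S\cap W|\ge\epsilon(\diam W)^d$. Handling this forces two adjustments throughout: (i) every comparison deformation must be redesigned to live inside $W_0$ rather than approaching $\partial W$, which costs a dimensional factor in the constants but no more, and (ii) the cases where $|S\cap W|<\epsilon(\diam W)^d$ must be peeled off and handled directly — for the upper bound this is immediate, and for the lower bound one uses that such scales cannot persist all the way up to $R$ without contradicting $x\in S^*$ at the bottom and the growth estimate. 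Once these bookkeeping issues are absorbed, the argument is a routine adaptation of \cite[Lemma~4.1]{DSQuasi}. I would also note explicitly that $\epsilon$ can be chosen depending only on $k$ (and $d,N$), as claimed, since every threshold introduced above is dimensional.
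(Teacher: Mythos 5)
Your upper-bound argument is circular, and the circularity comes precisely from the padding. In Definition~\ref{def:weakQuasi} the comparison \eqref{eq:weakQuasi} is between $|S\cap W|$ and $|h(S\cap W)|$, and since an $\epsilon$-padded deformation is the identity outside $W_0$, the image $h(S\cap W)$ contains $S\cap(W\setminus W_0)$ untouched. So projecting $S\cap W_0$ onto a $d$-disc only yields $|S\cap W|\le k\bigl(|S\cap(W\setminus W_0)|+CR^d\bigr)$, and the annulus term $|S\cap(W\setminus W_0)|$ is exactly the kind of quantity you are trying to bound; fixing ``one buffer of width $\epsilon\diam W$ once and for all'' does not remove it. This is the very feature that distinguishes weak quasiminimizers from quasiminimizers (see the remark immediately after Definition~\ref{def:weakQuasi}), so the one-shot projection cannot give $|S\cap W|\lesssim R^d$. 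The paper's proof accepts this and turns it into a growth argument: it pushes $S\cap Q$ into the $d$-skeleton of a grid of mesh $\sim R/n$ (so the deformed part has measure $\lesssim nR^d$), and observes that if $|S\cap Q|\gg R^d$ then \eqref{eq:weakQuasi} forces the annulus to carry a definite fraction of the measure, giving $|S\cap Q_2|\ge\left(1+\frac{1}{2Ck}\right)|S\cap Q|$ on a slightly larger cube; iterating through slowly growing nested cubes blows the measure up inside a bounded ball, contradicting the standing finiteness hypothesis \eqref{eq:finitenessCond}. Some argument of this type, using finiteness of $|S\cap B|$, is unavoidable.

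For the lower bound there are two gaps. First, radially pushing $S\cap B(x,\rho/2)$ to $\partial B(x,\rho)$ does not by itself decrease $d$-measure; the paper instead collapses to the $d$-skeleton of a grid of mesh $\sim\rho/n$ and uses that if $|S\cap Q|$ is smaller than the volume of a single $d$-cell, the collapsed image has measure zero. Second, and more seriously, ``contradicting $x\in S^*$ at the bottom'' is not a quantitative anchor: $S=S^*$ only gives $|S\cap B(x,\rho)|>0$, and the geometric decay $|S\cap Q_2|\le\left(1-\frac{1}{Ck}\right)|S\cap Q|$ that your differential inequality would produce is perfectly compatible with positive measure at every scale. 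Moreover, \eqref{eq:weakQuasi} may only be invoked when $|S\cap W|\ge\epsilon(\diam W)^d$, so you need a quantitative lower bound \emph{before} you can apply the deformation at a given scale. The paper supplies both ingredients with the a.e.\ lower-density theorem \eqref{eq:lowerDensity}: it works at points of lower density $\ge c_d$ (which are dense in $S$, and this suffices), anchors \eqref{eq:QMahlforsLower} at small scales, and propagates the bound upward by induction, the inductive hypothesis at the previous scale simultaneously guaranteeing that the weak-quasiminimizer hypothesis is applicable. Without this (or an equivalent) anchor your sketch does not close.
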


\begin{proof}
  Let $C=C(N)>1$ be a constant such that Lemma~\ref{lem:FFsets} is
  satisfied when $X$ is the cubical grid in $\R^N$.  Let $n>6$ be an
  integer and let $\delta=\frac{n-2}{n}$.  If $n$ is sufficiently
  large, then
  \[\delta^{-2d}<1+\frac{1}{2Ck}\]
  and there is a $j_0$ such that $\delta^{-2j_0}<2$ but
  $(1+\frac{1}{2Ck})^{j_0}>2\cdot 4^N$.  Let
  $$\epsilon=\min \{(1-\delta)(2\sqrt{N})^{-1}, C^{-1}(2n)^{-d}/32\}.$$
  We claim that for any $r$, if $S$ is a $(k,\epsilon,r)$-weak
  quasiminimizer, then $S$ is locally Ahlfors regular.

  Specifically, if $x\in \R^N$ and $R>0$, let $\Bsq(x,R)$ be the
  closed axis-aligned cube of side length $2R$ that is centered at
  $x$.  We claim that if $x\in S$ and $0<4R\sqrt{N}<r$, then
  $$|\Bsq(x,R) \cap S|\sim R^d.$$

  Our main tool is the following deformation.  Let
  $Q=\Bsq(x,R)\subset \R^N$.  If $s>0$, we define $sQ=\Bsq(x,sR)$.  By
  the choice of $\epsilon$, we have $\delta Q\subset \Core_\epsilon(Q)$.  Let
  $\sigma$ be the $n\times\dots\times n$ grid in $\delta Q$, so that
  $\sigma$ divides $\delta^2Q$ into a lattice with $n-2$ cubes on each
  side.

  By Lemma~\ref{lem:FFsets} and Lemma~\ref{lem:cellularize}, there is
  a Lipschitz deformation $\phi=q\circ p$ that is supported on
  $\delta Q$ and deforms $S\cap \delta^2 Q$ into the $d$-skeleton of
  $\sigma$.  That is, $\phi(S\cap \delta^2 Q)^*$ is a union of
  $d$--cells of $\sigma$ and
  \begin{align}
    \label{eq:boundaryDeform}  |\phi(S\cap A)| & \le C |S\cap A|\\
    \label{eq:coreDeform}  |\phi(S\cap \delta^2 Q)| & \le C |S\cap \delta^2 Q|,
  \end{align}
  where $A=Q\setminus \delta^2Q$.

  In fact, since $\phi(S\cap \delta^2 Q)$ lies in $\sigma^{(d)}$, we
  have
  \begin{align*}
    |\phi(S\cap Q)| &\le C|S\cap A|+|\sigma^{(d)}|\\ 
                  &\le C|S\cap A|+2nN (2R)^{d},
  \end{align*}
  so
  $$\frac{|S\cap Q|}{k}\le C|S\cap A|+2nN (2R)^{d}.$$
  Replacing $Q$ by $\delta^{-2}Q$, we obtain
  $$\frac{|S\cap \delta^{-2} Q|}{k} \le C|S\cap (\delta^{-2}Q\setminus 
  Q)|+2nN (2\delta^{-2}R)^{d} $$
  so
  \begin{equation}\label{eq:lowerAnnular}
    \frac{|S\cap Q|}{k} \le C|S\cap (\delta^{-2}Q\setminus Q)|+2nN (4R)^{d}.
  \end{equation}
  We will use this to prove an upper bound on $|S\cap Q|$.  
  
  Suppose that $|S\cap Q| > 4 k nN (4R)^d$.  Then, applying
  \eqref{eq:lowerAnnular} to $\delta^{-2} Q$, we find that
  \begin{align*}
    |S\cap (\delta^{-2} Q\setminus Q)| &> \frac{|S\cap Q|}{2Ck},\\
    |S\cap \delta^{-2} Q|&>(1+\frac{1}{2Ck}) |S\cap Q|.
  \end{align*}
  By our choice of $n$, 
  $$|S\cap \delta^{-2} Q|> 4 k nN (4\delta^{-2}R)^d,$$
  so we can apply this repeatedly to show
  $$|S\cap \delta^{-2j} Q|>(1+\frac{1}{2Ck})^j |S\cap Q|$$
  for all $j\le j_0$.  In particular, 
  $$|S\cap 2Q|\ge |S\cap \delta^{-2j_0} Q|>(1+\frac{1}{2Ck})^{j_0} |S\cap
  Q| \ge 2\cdot 4^N |S\cap Q|.$$
  The cube $2Q$ has side length $4R$, so it can be decomposed into
  $4^N$ cubes of side length $R$, each of which intersects $Q$.  One
  of these subcubes, say, $D_1$, satisfies
  $|S\cap D_1|\ge 2 |S\cap Q|.$ Repeating this process, we can
  construct cubes $D_2,D_3,\dots$ such that
  $|S\cap D_j|\ge 2^j|S\cap Q|,$ $D_i\cap D_{i+1}\ne\emptyset,$ and
  the diameter of the $D_j$'s shrinks geometrically.  All of these
  cubes lie in $4Q$, so $|S\cap 4Q|=\infty$, but since $S$ is
  locally finite, this is a contradiction.

  To prove the lower bound, we consider the upper density of $S$.  Define
  $$\Theta^{*d}(S,x)=\mathop{\lim \sup}_{t\to 0} \frac{|S\cap
    B(x,t)|}{(2t)^d}.$$
  By Thm.~6.2 of \cite{Mattila}, if $|E|>\infty$, then
  $\Theta^{d*}(E,e)\ge 2^{-d}$ for almost every $e\in E$ with respect
  to Hausdorff $d$-measure.  In particular, the set of points $x\in S$
  such that $\Theta^{d*}(S,x)\ge 2^{-d}$ is dense in $S$.

  Let $c= C^{-1}(2n)^{-d}/16$ and let $x\in S$ be such that
  $\Theta^{d*}(S,x)\ge 2^{-d}$.  We claim that
  $|S\cap \Bsq(x,t)|\ge c t^d$ for all such $x$ and all $t<rN^{-1/2}$.

  Since $\Theta^{d*}(S,x)\ge 2^{-d}$, there is an $R_0<t$ such that
  $|S\cap \Bsq(x,R_0)|>R_0^d>16cR_0^d$. Let $i_0>0$ be the minimal
  integer such that
  $|S\cap \Bsq(x,\delta^{-i_0}R_0)|< 2 c (\delta^{-i_0}R_0)^d$.  Let
  $R=\delta^{-i_0}R_0$ and let $Q=\Bsq(x,R)$.  We claim that
  $R\ge t$.  If not, we have 
  $$\epsilon (\diam Q)^d< cR^d <|S\cap Q|< 2 cR^d$$
  and $(\diam Q)< r$, so we can apply \eqref{eq:weakQuasi}.

  Let $\phi$ be an $\epsilon$-padded deformation on $Q$ as above.  The
  image $\phi(S\cap \delta^2 Q)$ is made up of $d$--cells of $\sigma$.
  Each of these has volume at least $R^d(2n)^{-d}$.  Since
  $C|S\cap Q|<R^d(2n)^{-d}$, we have $|\phi(S\cap \delta^2 Q)|=0$.
  Since $\phi$ is $\epsilon$-padded on $Q$, we have
  $$|\phi(S\cap Q)|=|\phi(S\cap A)|\ge \frac{|S\cap Q|}{k},$$
  and by \eqref{eq:boundaryDeform},
  $|S\cap A| \ge \frac{|S\cap Q|}{Ck}$.  Therefore,
  \begin{equation}\label{eq:weakLowerInductive}
    |S\cap \delta^2Q|\le |S\cap Q|\left(1-\frac{1}{Ck}\right).
  \end{equation}

  By our choice of $n$, we have $\delta^{2d}>1-\frac{1}{2Ck}$, so 
  $$|S\cap \delta^2Q|< 2c (\delta^2R)^d.$$
  This contradicts the minimality of $i_0$, so $R>rN^{-1/2}$.
  Therefore, $|S\cap \Bsq(x,t)|\ge c t^d$ for all $t<rN^{-1/2}$ and
  all $x$ in a dense subset of $S$, so $S$ is locally Ahlfors regular.
\end{proof}

The second place that the quasiminimizing condition arises in
\cite{DSQuasi} is in the proof of Proposition~5.1 of \cite{DSQuasi},
which constructs Lipschitz maps from $S$ to $\R^d$ whose images have
positive measure.  We will show the corresponding proposition for weak
quasiminimizers:
\begin{prop}[{\cite[Prop.~5.1]{DSQuasi}}]\label{prop:DSprojection}
  For any $k>1$, there are $\epsilon,C>0$ that depend only on $k$ and
  $N$ such that if $S$ is a $(k,\epsilon,r)$-weak quasiminimizer and
  $Q$ is a cube centered on $S$ with $\diam Q<r$, then there is a
  $C$-Lipschitz map $h:\R^N\to \R^d$ such that
  \[|h(S\cap Q)|\ge C^{-1}(\diam Q)^d.\]
\end{prop}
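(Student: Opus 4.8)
The plan is to follow the proof of \cite[Prop.~5.1]{DSQuasi}, checking that the deformation David and Semmes use to produce a Lipschitz map with large image can be taken to be $\epsilon$-padded when $\epsilon$ is small, at the cost of slightly worse constants. First, by Lemma~\ref{lem:quasiImpliesRegular}, if $\epsilon$ is small enough (depending on $k$ and $N$) then $S$ is locally Ahlfors regular with constants depending only on $k$ and $N$; so we may freely use that $|S\cap B(x,R)|\sim R^d$ for $x\in S$, $R<r$. Now fix a cube $Q$ centered on a point of $S$ with $\diam Q<r$, and let $Q'\subset Q$ be the concentric cube with $\diam Q'=(1-10\epsilon)\diam Q$, so that anything supported on $Q'$ is automatically $\epsilon$-padded on $Q$. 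The idea is: if no $C$-Lipschitz map $h:\R^N\to\R^d$ had image of measure $\gtrsim (\diam Q)^d$ on $S\cap Q'$, then in particular the orthogonal projections $\pi_V$ onto the $\binom{N}{d}$ coordinate $d$-planes $V$ would each have $|\pi_V(S\cap Q')|$ small, and David and Semmes show that this forces $S\cap Q'$ to admit a measure-reducing deformation supported near $Q'$ — roughly, one projects $S$ onto a low-dimensional skeleton using the Federer--Fleming machinery (Lemma~\ref{lem:FFsets}), and the smallness of all the coordinate projections bounds the measure of the image. The output of that construction is a deformation $h$ with $\supp h$ contained in a slight enlargement of $Q'$, hence in $Q'$ itself if we chose $Q'$ a touch smaller, so $h$ is $\epsilon$-padded on $Q$, and since $|S\cap Q|\gtrsim (\diam Q)^d\ge \epsilon(\diam Q)^d$ the weak quasiminimizing inequality \eqref{eq:weakQuasi} applies and is violated — a contradiction.

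In more detail, the key steps are: (1) invoke Lemma~\ref{lem:quasiImpliesRegular} to fix $\epsilon_0(k,N)$ below which $S$ is locally Ahlfors regular; (2) set $\epsilon\le \epsilon_0$ small enough that the padding cube $Q'$ still satisfies $|S\cap Q'|\sim(\diam Q)^d$ (possible by Ahlfors regularity: shrinking the cube by a factor $1-O(\epsilon)$ changes the measure by $O(\epsilon)$, so $|S\cap Q'|\ge \tfrac12|S\cap Q|\gtrsim(\diam Q)^d$); (3) suppose for contradiction that for every $C$-Lipschitz $h:\R^N\to\R^d$ one has $|h(S\cap Q)|<C^{-1}(\diam Q)^d$, with $C$ a large constant to be fixed; (4) apply David--Semmes's argument from \cite[Prop.~5.1]{DSQuasi} verbatim to $S\cap Q'$, producing a deformation $\phi$ supported in a $\delta\diam Q$-neighborhood of $Q'$ (with $\delta$ absorbable into the $10\epsilon$ slack) such that $|\phi(S\cap Q')|\lesssim$ (the largest coordinate-projection measure) $\ll |S\cap Q'|/k$; (5) observe $\phi$ is an $\epsilon$-padded deformation on $Q$, so \eqref{eq:weakQuasi} gives $|\phi(S\cap Q)|\ge|S\cap Q|/k$, and since $\phi$ fixes $Q\setminus Q'$ one gets $|\phi(S\cap Q')|\ge |S\cap Q|/k - |S\cap(Q\setminus Q')|\gtrsim (\diam Q)^d$ — contradicting step (4) once $C$ is chosen large enough. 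Conclude that some $C$-Lipschitz $h$ has $|h(S\cap Q)|\ge C^{-1}(\diam Q)^d$.

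The main obstacle — and the only place real work is needed beyond citing \cite{DSQuasi} — is step (4): verifying that the deformation in David and Semmes's proof of Proposition~5.1 genuinely lives in a controlled neighborhood of the cube rather than spilling out to its boundary or using arbitrarily thin buffer layers (the latter being exactly the pathology flagged after Definition~\ref{def:weakQuasi} and encountered already in Lemma~\ref{lem:quasiImpliesRegular}). I expect their construction is a single Federer--Fleming-type projection onto a grid skeleton at a fixed scale comparable to $\diam Q$, so the support is automatically within $O(\diam Q)$ of $Q$ and a fixed fraction of a buffer suffices; the bookkeeping is to track that fraction and feed it into the choice of $\epsilon$. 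A secondary, purely technical point is reducing the ``every $C$-Lipschitz map'' hypothesis to finitely many coordinate projections: this is standard, since an arbitrary $d$-dimensional Lipschitz image with large measure forces some coordinate $d$-plane projection to have large measure (by the coarea/area formula and the fact that the sum of the $\binom{N}{d}$ coordinate Jacobian minors controls the full $d$-dimensional Jacobian up to a dimensional constant).
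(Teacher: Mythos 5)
There is a genuine gap, and it sits exactly where you flag the ``main obstacle'': step (4) is asserted rather than proved, and the asserted bound is not one the Federer--Fleming machinery provides. You claim that if all coordinate $d$-plane projections of $S\cap Q'$ have small measure, then the deformation onto the grid $d$-skeleton has image of measure $\lesssim$ the largest projection measure. But the deformed set is a union of full grid $d$-cells, and arbitrarily many cells parallel to a given coordinate plane $V$ can stack over the same projected cell of $V$, so $|\phi(S\cap Q')|$ is in no way controlled by $|\pi_V(S\cap Q')|$; nothing in Lemma~\ref{lem:FFsets} or in the cited construction gives such a bound, and your contradiction never closes. In fact the whole contradiction framing is unnecessary. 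The paper's proof is direct: after the Ahlfors-regularity step (which you do correctly), deform $S$ into the $d$-skeleton of a grid of side $\sim\epsilon\diam Q$ by a map $g$ supported well inside $Q$, so $g$ is $\epsilon$-padded; the weak quasiminimizing inequality \eqref{eq:weakQuasi} forces $|g(S\cap Q_0)|\ge|S\cap Q_0|/k$, while the contribution of the buffer annulus is small, so the deformed set must contain at least one \emph{full} $d$-cell of the grid; composing $g$ (Lipschitz with constant depending on $k$, by the Ahlfors-regular case of Lemma~\ref{lem:FFsets}) with the orthogonal projection onto the plane of that cell gives the desired $h$ with $|h(S\cap Q)|\ge(\epsilon\diam Q)^d$. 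The only role of projections is at the very end, applied to the deformed set, not to $S$.

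A second, smaller error: your step (2)/(5) claims that shrinking $Q$ by a factor $1-O(\epsilon)$ changes $|S\cap\cdot|$ by $O(\epsilon)$ ``by Ahlfors regularity.'' That is false for $d<N$: a shell of thickness $\epsilon\diam Q$ can carry a fixed fraction of $|S\cap Q|$ (covering the shell by $\epsilon$-balls only gives $|S\cap(Q\setminus Q')|\lesssim\epsilon^{d-N+1}(\diam Q)^d$, which is useless), so your inequality $|\phi(S\cap Q')|\ge|S\cap Q|/k-|S\cap(Q\setminus Q')|\gtrsim(\diam Q)^d$ can fail. The paper avoids this with a pigeonhole over $\sim\epsilon^{-1}$ disjoint shells to choose a radius $R_0\in(R/2,R)$ whose buffer annulus has measure $\le 10\epsilon|S\cap Q|$; you need that (or an equivalent) before invoking \eqref{eq:weakQuasi}. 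This second issue is repairable by the standard pigeonhole, but the first one is the core of the proposition and is missing.
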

The proof closely follows the proof of Proposition~5.1 of
\cite{DSQuasi}.
\begin{proof}
  By Lemma~\ref{lem:quasiImpliesRegular}, if $\epsilon$ is
  sufficiently small, we may assume that $S$ is locally Ahlfors
  regular with regularity constant $C_0=C_0(k)$.  Suppose that
  $Q=\Bsq(x,R)$ for some $x\in S$.  For any $0<\epsilon<1/4$, a
  pigeonhole argument implies that there is a radius $R/2<R_0<R$ such
  that if $Q_0=\Bsq(x,R_0)$, $\delta=1-\epsilon$, and
  $A=Q_0\setminus \delta^2Q_0$, then
  \[|S\cap A|\le 10\epsilon |S\cap Q|.\] 
    
  Divide $\delta Q_0$ into a grid of side length
  $\epsilon \delta R_0$.  As in the proof of
  Lemma~\ref{lem:quasiImpliesRegular}, Lemma~\ref{lem:FFsets} and
  Lemma~\ref{lem:cellularize} give us an $\epsilon$-padded deformation
  $\phi$ on $Q_0$ that pushes $S\cap \delta^2Q_0$ into the
  $d$-skeleton of the grid and satisfies
  \begin{align*}
    |\phi(S\cap A)| & \le C |S\cap A|\\
    |\phi(S\cap \delta^2 Q_0)| & \le C |S\cap \delta^2 Q_0|.
  \end{align*}
  Furthermore, since $S$ is locally Ahlfors regular, we can take $g$
  to be Lipschitz with constant depending on $k$.

  The Ahlfors regularity of $S$ gives a lower bound on $|S\cap Q_0|$,
  so \eqref{eq:weakQuasi} implies
  \[|\phi(S\cap Q_0)|\ge |S\cap Q_0|/k\gtrsim_k R^d.\]
  But
  \begin{align*}
    |\phi(S\cap Q_0)|&\le |\phi(S\cap \delta^2 Q_0)|+|\phi(S\cap A)|\\
                     &\le |\phi(S\cap \delta^2 Q_0)|+10 C \epsilon |S\cap Q|\\
                     &\lesssim_k |\phi(S\cap \delta^2 Q_0)|+\epsilon R^d.
  \end{align*}
  so if $\epsilon$ is sufficiently small, then
  $\phi(S\cap \delta^2 Q_0)$ must contain at least one full $d$-cell
  of side length $\epsilon R$.  If we compose $\phi$ with the projection
  to a plane parallel to this cell, we get a Lipschitz map
  $h:\R^N\to \R^d$ such that $|h(S\cap Q)|\ge (\epsilon R)^d$, as
  desired.
\end{proof}

Finally, David and Semmes use the quasiminimizing condition to show
that $S$ has big pieces of bilipschitz images.  They first use the
map constructed in Proposition~\ref{prop:DSprojection} to transform $S$
into a quasiminimizer $S'$ in $\R^d\times \R^N$ such that the
projection to the $\R^d$ factor has a large image, then show that a
quasiminimizer with a large projection must have a big piece of a
Lipschitz image.  

\begin{proof}[Proof of Proposition~\ref{prop:QMtoUR}]
  Let $D\subset \R^N$ be a cube centered on $S$ with $\diam D<r$.  Let
  $h:\R^N\to \R^d$ be the map constructed in
  Proposition~\ref{prop:DSprojection}, so that $\Lip(h)\lesssim 1$ and
  \[|h(S\cap D)|\gtrsim (\diam D)^d.\] Let $\gamma:\R^N\to \R^d\times
  \R^N$ be the map $\gamma(x)=(h(x),x)$ and let $S'=\gamma(S)$.  Since
  $\gamma$ is bilipschitz on $S$, it follows that $S'$ is Ahlfors
  regular, and in fact, $S'$ is a quasiminimizer.  Let $p_1$ and $p_2$
  be the projections to $\R^d$ and $\R^N$, respectively.

  David and Semmes show that $S\cap D$ has a big piece of a Lipschitz
  image.  Specifically, there is a $T\subset S\cap D$ such that
  $p_1\circ \gamma|_{T}$ is bilipschitz and $|T|\sim |S\cap D|$.  To
  prove this, they suppose that $S'$ is Ahlfors regular and has a big
  projection but no such $T$ exists, then construct a deformation
  $\phi$ that reduces the volume of $S'$; this contradicts the fact
  that $S'$ is a quasiminimizer.  

  In fact, $\phi$ can be chosen to be an $\epsilon$-padded
  deformation.  Suppose that $S$ and $S'$ are as above and that
  no such $T$ exists.  For any $U\subset \R^N$ and $r>0$, let
  \[B(U,r)=\{x\in \R^n\mid d(x,U)\le r\}.\] In Prop.~9.6 and Sec.~9.2
  of \cite{DSQuasi}, David and Semmes show that there is a $C\sim 1$
  such that for any sufficiently large integer $K$ (in \cite{DSQuasi},
  this is denoted $N$), there are a cube $P_0\subset \R^d$, a ball
  $V_0\subset \R^{N}$, and a set $Q=P_0\times V_0$ with $\diam Q\le
  r/2$ such that $S'\cap Q$ is (very roughly) close to a strict subset
  of the graph of a function $P_0\to V_0$.  
  Consequently, there is a deformation $\phi$ that shrinks $S'\cap Q$
  substantially.  

  To be specific, $P_0$, $V_0$, and $\phi$ satisfy the properties
  below.  As in
  \cite{DSQuasi}, we will rescale distances so that $P_0$ is a cube of
  side length $2K$.  (All references are to \cite{DSQuasi}, and all
  implicit constants depend only on $d$, $N$, and $k$.)
  \begin{enumerate}
  \item \label{it:diam} $\diam V_0\sim \diam Q\sim K$ (Lemma 9.74 of \cite{DSQuasi}).
  \item \label{it:endsClear}
    $d(P_0\times \partial V_0, S')\gtrsim K$ (Lemma 9.74).
  \item \label{it:containsBall}
    $Q$ contains a ball of radius
    $\sim K$ centered on a point of $S'$, so $|S'\cap Q|\gtrsim K^d$ (9.63).
  \item \label{it:support}
    There is a $r_0\sim 1$ such that $\supp \phi\subset B(S'\cap
    Q,r_0)$ (9.10).
  \item\label{it:unitCubes}
    If $D$ is a unit cube in $\R^d$ such that $D\subset
    B(P_0,K/C)$, then $|S'\cap (D \times V_0)|\lesssim 1$
    (Lemma~9.84).  
    
    If $1<c\ll \frac{K}{Cr_0}$, then the region
    $B(Q,cr_0) \setminus Q$ can be broken up into two regions, one
    in a neighborhood of $P_0\times \partial V_0$ and one in a
    neighborhood of $\partial P_0\times V_0$.  The second region can
    be covered by products of the form $D \times V_0$, so by
    property \ref{it:endsClear}, we have
    \[|S'\cap (B(Q,cr_0) \setminus Q)|\lesssim cK^{d-1}  \text{\quad (see 9.102)}.\]
  \item \label{it:Lipschitz}$\phi$ is $C$-Lipschitz on $S' \setminus Q$
    
    In fact, (9.13) states that $\phi$ is $C$-Lipschitz except on
    $P_0\times \R^{n-d}$, and \itref{it:endsClear} implies that
    $(S' \setminus Q)\cap \supp \phi$ is disjoint from
    $P_0\times \R^{n-d}$.
  \item\label{it:collapsing}
    $\phi(S'\cap Q)$ has finite $(d-1)$-dimensional Hausdorff
    measure and thus $|\phi(S'\cap Q)|=0$ (9.12).
  \end{enumerate}
  
  Let $c\ge 2$.  Then $\phi$ is supported on $B(Q,r_0)$ and is $\sim
  1/K$-padded on $B(Q,c r_0)$.  Furthermore, by properties
  \ref{it:unitCubes}, \ref{it:Lipschitz}, and \ref{it:collapsing},
  \begin{equation}\label{eq:graphQuasi}
    |\phi(S'\cap B(Q,cr_0))|\le C^d |S'\cap (B(Q,cr_0)\setminus Q)|\lesssim
    c K^{d-1}.
  \end{equation}
  when $K$ is sufficiently large.  By \itref{it:containsBall} above, 
  $|S'\cap Q|\gtrsim K^d$, so
  \[|\phi(S'\cap B(Q,cr_0))|\lesssim c |S'\cap Q|/K.\]
  Taking $K\gg c k$, we see that if $S'$ has a big projection and is
  Ahlfors regular but not uniformly rectifiable, then $S'$ is not a
  $(k,\sim\!\! 1/K,r)$-weak quasiminimizer.  

  Now we use $\phi$ to construct a padded deformation of $S$.  Let
  $\phi':\R^N\to \R^N$ be the map $\phi'=p_2\circ \phi \circ \gamma$,
  let $U=p_2(S'\cap Q)$, and let $W=B(U,2r_0)$.  We claim that there
  are $\epsilon,K>0$ depending on $d$, $N$, and $k$ such that $\phi'$
  is $\epsilon$-padded on $W$ and $|\phi'(S\cap W)|< |S\cap W|/k.$

  First, we claim that $\supp \phi'\subset B(U,r_0)$.  Suppose that
  $x\in \R^N$ and $\phi'(x)\ne x$.  Then, by \itref{it:support}, we
  have
  \[\{\gamma(x), \phi(\gamma(x))\}\subset B(S'\cap Q,r_0).\] If we
  project to $\R^N$, we get
  \[\{x,\phi'(x)\}=p_2(\{\gamma(x),\phi(\gamma(x))\})\subset
  B(U,r_0).\] Therefore, $\supp \phi' \subset B(U,r_0)$, and
  $\phi'$ is $\sim 1/K$-padded on $W$.

  We thus consider $|\phi'(S\cap W)|$ and $|S\cap W|$.  If $x\in
  S'\cap Q$, then $p_2(x)\in S\cap W$ and $\gamma(p_2(x))=x$, so
  $S'\cap Q\subset \gamma(S\cap W)$.  Since $\gamma$ is a bilipschitz
  map,
  \[|S\cap W|\gtrsim |S'\cap Q| \gtrsim K^d.\]
  On the other hand,
  \[\gamma(W) \subset B(\gamma(U),2r_0\Lip(\gamma))\subset B(Q,2r_0\Lip(\gamma)),\]
  and by \eqref{eq:graphQuasi},
  \begin{align*}
    |\phi'(S\cap W)|&\lesssim |\phi(\gamma(S\cap W))| \\ 
    &\lesssim |\phi(S'\cap B(Q,2r_0\Lip(\gamma))| \\
    &\lesssim 2\Lip(\gamma)K^{d-1}\\ 
    &\lesssim |S\cap W|/K.
  \end{align*}
  If $K$ is sufficiently large and $\epsilon$ is sufficiently
  small, this implies that $S$ is not a $(k,\epsilon,r)$-weak quasiminimizer.
  Therefore, if $S$ is a weak quasiminimizer, then it is locally
  uniformly rectifiable, as desired.
\end{proof}

\subsection{Proof of Theorem~\ref{thm:URdecomp}}\label{sec:pfURdecomp}
In this section, we will prove that a cellular cycle $A\in \Ccell_d(\tau)$
can be decomposed into a sum of finitely many cellular cycles $M_i$
supported on uniformly rectifiable sets $E_i$.  We restrict the
proposition to cellular cycles to avoid infinite sums.  It is
possible that a similar proposition holds for Lipschitz chains or
Lipschitz currents, but a Lipschitz chain or current might need to be
decomposed into infinitely many pieces.

A key tool in our construction is the following coarse version of the Whitney decomposition.
\begin{lemma}\label{lem:Whitney}  
  Suppose that $W\subset \R^N$ is an open subset and let 
  \[w(x)=\max\{\sqrt{N},d(x,\R^N\setminus W)\}.\]
  There is a decomposition $\tau_W$ of $\R^N$ into a cell complex such
  that: 
  \begin{enumerate}
  \item Each $N$-cell $D$ of $\tau_W$ is a dyadic cube of side length
    $\ge 1$.
  \item If $D\in \tau_W^{(N)}$
    is a dyadic cube, then
    \[\frac{w(x)}{4} \le \diam D \le w(x)\]
    for all $x\in D$.  In particular, if $D$ and $D'$ are neighboring
    dyadic cubes in $\tau_W^{(N)}$, then
    \[(\diam D)/4\le \diam D'\le 4\diam D.\]
  \end{enumerate}
  We call $\tau_W$ a \emph{coarse Whitney cubulation} of $\R^N$.  In
  particular, $\tau$ is a coarse Whitney cubulation for the empty set.
\end{lemma}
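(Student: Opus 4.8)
The plan is a coarsened dyadic Whitney construction. I would begin by recording the two features of $w$ that do all the work: $d(\cdot,\R^N\setminus W)$ is $1$-Lipschitz, and the maximum of a $1$-Lipschitz function with a constant is again $1$-Lipschitz, so $w$ is $1$-Lipschitz; and $w\ge\sqrt N$ everywhere by construction. I will assume $W\ne\R^N$, so that $w$ is finite (this is the only case needed). Call a dyadic cube $D$ of side length $2^k$, $k\ge 0$, \emph{fitting} if $\diam D\le w(x)$ for every $x\in D$, equivalently $\sqrt N\,2^k\le\min_{x\in D}w(x)$. Two trivial remarks: every side-$1$ dyadic cube is fitting, since its diameter is exactly $\sqrt N\le\min w$; and any dyadic subcube of a fitting cube is fitting. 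Hence for a point $x$ off the dyadic hyperplanes, the set of $k\ge 0$ for which the side-$2^k$ dyadic cube through $x$ is fitting is an initial segment $\{0,1,\dots,K(x)\}$ of $\Z_{\ge0}$, and it is finite because a fitting side-$2^k$ cube through $x$ satisfies $\sqrt N\,2^k\le w(x)$.

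Next I would take the $N$-cells of $\tau_W$ to be the \emph{maximal} fitting dyadic cubes, i.e.\ the fitting cubes whose dyadic parent (the side-$2^{k+1}$ dyadic cube containing them) is not fitting; these are the cubes of level $K(x)$ through points $x$ off the dyadic hyperplanes. By the standard dyadic dichotomy, two of these closed cubes are either disjoint in their interiors or nested, and a nested pair would make the parent of the smaller cube fitting, contradicting maximality, so their interiors are pairwise disjoint; and every $x$ lies in its level-$K(x)$ cube, so they cover $\R^N$. Since $K(x)\ge0$, each has side length $\ge1$, which is property (1). To turn this into a genuine (polyhedral, hence QC) complex I would adjoin all faces of the chosen cubes, subdividing each face of a larger cube along the boundaries of the smaller cubes meeting it; the scale comparison below makes this a locally finite refinement.

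The estimates in property (2) then fall out of the definitions. Fix $D\in\tau_W^{(N)}$ of side length $2^k$. Because $D$ is fitting, $\diam D\le w(x)$ for all $x\in D$. For the other inequality, maximality says the parent $\widehat D$ (side $2^{k+1}$, so $\diam\widehat D=2\diam D$) is not fitting, hence there is $y\in\widehat D$ with $w(y)<\diam\widehat D=2\diam D$; for any $x\in D\subset\widehat D$ we have $|x-y|\le\diam\widehat D=2\diam D$, so $1$-Lipschitzness gives $w(x)\le w(y)+|x-y|<4\diam D$, i.e.\ $\diam D\ge w(x)/4$. If $D,D'\in\tau_W^{(N)}$ share a point $z$, then $\diam D\le w(z)<4\diam D'$ and symmetrically, so $(\diam D)/4\le\diam D'\le4\diam D$ (the dyadic constraint in fact sharpens the $4$ to a $2$). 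Finally, for $W=\emptyset$ one has $d(\cdot,\R^N)=0$, so $w\equiv\sqrt N$ and only the unit cubes are fitting, giving $\tau_W=\tau$.

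I expect the only step needing genuine care to be the assembly into a cell complex: a maximal fitting cube can abut several strictly smaller ones along a single face, so the boundary complex of the larger cube must be refined to match. Since neighbouring $N$-cells have comparable diameters, this refinement is locally finite and introduces only boxes of bounded eccentricity, so $\tau_W$ is a QC complex; it is worth making explicit, since $\tau_W$ will later be fed to the deformation lemmas (Lemma~\ref{lem:simultFF}, Lemma~\ref{lem:FFsets}), which require that hypothesis.
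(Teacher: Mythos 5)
Your proposal is correct and takes essentially the same route as the paper: both produce a coarsened dyadic Whitney decomposition adapted to $w$, the paper by taking the classical Whitney decomposition of $W$, keeping its cubes of side length $\ge 1$, and filling the rest with unit dyadic cubes, and you by running the underlying maximal-dyadic-cube (stopping-time) construction directly, which is the same mechanism and yields the stated constants via the $1$-Lipschitz bound on $w$. Your closing remark on refining faces so that $\tau_W$ is a genuine (QC) cell complex addresses a point the paper leaves implicit, and is a worthwhile addition rather than a deviation.
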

\begin{proof}
  We construct $\tau_W$ from the Whitney decomposition $\tau_0$ of
  $W$.  This decomposition is a partition of $\R^N\setminus W$ into
  dyadic cubes (of all sizes) that intersect only on their boundaries
  and satisfy the property that
  \[\frac{d(D, \R^N\setminus W)}{4} \le \diam D \le d(D, \R^N\setminus W).\]
  Let $T$ be the set of cubes of $\tau_0$ of side length $\ge1$.  We
   partition the complement of $T$ into a set $T'$ of unit dyadic
  cubes. If $\tau_W$ is the cubulation whose set of top-dimensional
  cells is $T\cup T'$, then $\tau_W$ satisfies the conditions of the
  lemma.
\end{proof}

If a set $S$ is not uniformly rectifiable, the results of the previous
section imply that there is a deformation of $S$ that reduces its
measure.  We combine that deformation with an approximation to produce
a uniformly rectifiable set.

\begin{lemma} \label{lem:URapprox}
  Let $0<\delta<1$ and let $\rho>0$.  Suppose that $S$ is a
  $(\rho,\delta)$-locally UR set.  Let $W\subset \R^N$ be a bounded
  open set and let $h:\R^N\to \R^N$ be a $\delta$-padded deformation
  on $W$.  Let $\phi:\R^N\to \R^N$ be a map that deforms $h(S)$ into
  $\tau_W^{(d)}$, as in Lemmas~\ref{lem:FFsets} and \ref{lem:cellularize},
  so that $U=\phi(h(S))^*$ is a union of $d$-cells of $\tau_W$.  There is
  an $\epsilon=\epsilon(\delta,N,d)>0$ such that $E=S\cup U$
  is $(\rho,\epsilon)$-locally UR.
\end{lemma}
\begin{proof}
  First, we claim that there is a $c>0$ depending on $N$ and $\delta$
  such that for all $x\in E$,
  $$d(x,S)\le c w(x).$$
  It suffices to show this for all $x\in U$.  If
  $x\in U$, then $x$ is contained in a cell $D\subset \tau$
  that intersects $h(S)$.  Let $s\in S$ be such that $h(s)\in D$.  

  If $h(s)\in \Core_\delta(W)$, then $s\in \Core_\delta(W)$, so
  $d(x,s)\le \diam W$.  By Lemma~\ref{lem:Whitney},
  $w(x)\sim w(h(s))\ge \delta \diam W$, so
  $d(x,s)\lesssim \delta^{-1} w(x)$ as desired.

  If $h(s)\not \in \Core_\delta(W)$, then $h(s)=s$, so
  $d(x,s)\le \diam D\lesssim w(x)$.

  Now, suppose that $x\in E$ and $R<\rho$ and let $B=B(x,R)$.  To show
  the uniform rectifiability of $E$, we need to show two things: that
  $B\cap E$ contains a Lipschitz image with volume on the order of
  $R^d$ and that $|B\cap E|\sim_\delta R^d$.  (For the rest of the
  proof, implicit constants will depend on $d$, $N$, and $\delta$.)

  First, we show that $B\cap E$ contains a Lipschitz image.  This will
  also imply that $|B\cap E|\gtrsim R^d$.  If $x\in S$, this follows
  from the uniform rectifiability of $S$, so we consider the case that
  $x\in U$.  Let $D\subset U$ be a $d$-cell of
  $\tau_W$ that contains $x$.  If $R<2 c w(x)$, then
  $|B\cap D|$ is a Lipschitz image with volume 
  $$|B\cap D|\ge v_d\min \{(w(x)/2)^d,(R/2)^d\}\ge v_d
  \left(\frac{R}{4c}\right)^d.$$

  If $R\ge 2 c w(x)$, let $s\in S$ be such that $d(x,s)\le
  R/2$.  Then $B_{R/2}(s)\subset B$, and $B_{R/2}(s)\cap S$ contains a
  Lipschitz image with volume on the order of $R^d$.  

  It remains to show that $|B\cap E|\lesssim R^d$.  By the Ahlfors
  regularity of $S$, we have $|B\cap S|\lesssim R^d$, so we need only
  show that $|B\cap U|\lesssim R^d$.  

  For $X\subset \R^N$, let $\nbhd_W X$ be the closed set consisting of
  the union of every cell of $\tau_W$ that intersects $X$.  We write
  $U$ as a union $U=U_1\cup U_2$ where
  $U_1=U\cap \nbhd_W \Core_{\delta}(W)$ and
  $U_2=U\setminus \nbhd_W \Core_{\delta}(W)$.

  For the first set, we note that
  $U_1\subset \tau_W^{(d)}\cap \nbhd_W \Core_{\delta}(W)$.  If $D$ is
  a dyadic cube in $\tau_W$ that intersects $\Core_{\delta}(W)$, then
  the side length of $D$ is at least
  $\sigma=N^{-1/2}\delta\diam W/4\sim \diam W$.  If $\sigma'$ is the
  largest power of 2 such that $\sigma'<\sigma$, then
  $D^{(d)}\subset \tau_{\sigma'}^{(d)}$, where $\tau_{\sigma'}$ is the
  grid of side length $\sigma'$.  Therefore,
  $U_1\subset \tau_{\sigma'}^{(d)}$.  The $d$-skeleton of a cube is
  Ahlfors $d$-regular, and the number of dyadic cubes of
  $\tau_{\sigma'}$ that intersect $\nbhd_W \Core_{\delta}(W)$ is
  bounded, so $|B\cap U_1|\lesssim R^d$.

  For the second set, note that
  $\phi(\Core_{\delta}(W))\subset \nbhd_W \Core_{\delta}(W)$.  Therefore,
  $$U_2\subset \phi(h(S)\setminus \Core_{\delta}(W))^*\subset \phi(S)^*.$$
  Let $x\in U_2$.  We consider two cases.  

  If $w(x)\ge 2R$, then $w(y)\ge R$ for all $y\in B$, and each dyadic
  cube $D$ of $\tau_W$ that intersects $B$ has side length at least
  $N^{-1/2}R/4$.  Therefore, the number of cubes of $\tau_W$ that
  intersect $B$ is bounded, and $|B\cap U_2|\lesssim R^d$.  

  If $w(x)\le 2R$, then $w(y)\le 3R$ for all $y\in B$, so
  $\nbhd_W B\subset B(x,4R)$.  By Lemma~\ref{lem:FFsets}.\ref{it:localApproxSetsNbhd}, we have
  \[|U_1\cap B|\le |\phi(S)^*\cap B| \lesssim |S \cap \nbhd_W B|\lesssim R^d\].  
\end{proof}

We will use this lemma to prove Theorem~\ref{thm:URdecomp}.

\begin{proof}[Proof of Thm.\ \ref{thm:URdecomp}]
  Let $C$ be as in Lemma~\ref{lem:FFsets} and let $k=2C+2$.  Let
  $\epsilon,\epsilon'>0$ be as in Prop.~\ref{prop:QMtoUR}.  Note that
  $\epsilon$ and $\epsilon'$ depend only on $N$ and $d$.  Suppose that
  $\alpha\ne 0\in C_d(\tau;\Z_2)$.  We claim that there is a nonzero
  cycle $\alpha'\in C_d(\tau;\Z_2)$ and a uniformly rectifiable set
  $E$ such that $\supp(\alpha-\alpha')\subset E$ and
  \begin{equation}\label{eq:differenceBound}
    |\supp \alpha|-|\supp \alpha'|\gtrsim |E|.
  \end{equation}

  First, we construct $\alpha'$.  Let $S=\supp \alpha$.  Let $r$ be
  the maximal power of $2$ such that $S$ is $(r,\epsilon')$-locally UR
  (some such $r$ must exist because $0<|S|<\infty$).  Then $S$ is not
  $(2r,\epsilon')$-locally UR, so, by Prop.~\ref{prop:QMtoUR}, there
  is an $\epsilon$-padded deformation $h$ on some $W\subset \R^N$ such
  that $\diam W\le 2r$,
  \begin{equation}\label{eq:URdecompLower}
    |S\cap W|\ge \epsilon (\diam W)^d,
  \end{equation}
  and \begin{equation}\label{eq:URdecompdeform}
    |h(S\cap W)|\le |S\cap W|/k.
  \end{equation}

  By Lemma~\ref{lem:FFsets} and Lemma~\ref{lem:cellularize}, there is
  a Lipschitz deformation $\phi=q\circ p:\R^N\to \R^N$ that deforms $h(S)$ into
  $\tau_W^{(d)}$.  Then $(\phi\circ h)_\sharp(\alpha)$ is a chain
  supported on $\tau^{(d)}$ with boundary supported in $\tau^{(d-1)}$.
  Let $\alpha'\in \Ccell_d(\tau;\Z_2)$ be the cellular chain that is
  flat-equivalent to $(\phi\circ h)_\sharp(\alpha)$.  This exists by
  Lemma~\ref{lem:LipToCell}.

  The support $\supp(\alpha-\alpha')$ is contained in the set
  $E_0=S\cup \phi(h(S))^*$.  In fact, since $h$ is supported inside $W$
  and since $\tau_W$ and $\tau$ agree outside of $W$, we have
  $\supp(\alpha-\alpha')\subset E_0\cap \nbhd W$, where $\nbhd W$
  consists of the union of the cubes of $\tau$ that intersect $W$.  

  We claim that there is a uniformly rectifiable set $E$ such that
  $E_0\cap \nbhd W \subset E$ and $\diam E\sim \max \{1,r\}$.  If
  $r\le \sqrt{N}$, we let $E=\nbhd W\cap \tau^{(d)}$.  This is a union
  of boundedly many unit $d$-cubes, so it is uniformly rectifiable.
  Otherwise, if $r> \sqrt{N}$, then
  $$\diam(\nbhd W) \le \diam W+2\sqrt{N}\le 4r.$$
  Let $x\in E_0\cap \nbhd W$.  Since $E_0$ is $(r,\epsilon'')$-locally
  UR and $r\sim R$, Cor.~\ref{cor:rURto2rUR} implies that there is an
  $\epsilon'''\sim 1$ such that $E_0$ is $(4r,\epsilon''')$-locally UR.
  By Def.~\ref{def:locallyUR}, there is a uniformly rectifiable
  set $E$ such that
  \[E_0\cap \nbhd W\subset E_0\cap B(x,4r) \subset E \subset E_0\cap
    B(x,8r).\]
  In either case, the Ahlfors regularity of $E$ implies that
  $|E|\sim \max \{1,r^d\}$.

  It remains to prove \eqref{eq:differenceBound}.  By
  Lemma~\ref{lem:FFsets}.\ref{it:FFsetsBound} and Lemma~\ref{lem:cellularize}.\ref{it:cellularizeBound}, we have
  $$|\supp \alpha'| \le |\phi(h(S))| \le |h(S)|+C |h(S)\setminus \tau_W^{(d)}|.$$
  But $S$ and $h(S)$ coincide outside $W$, as do $\tau^{(d)}$ and
  $\tau_W^{(d)}$, so $h(S)\setminus \tau_W^{(d)}\subset W.$ We thus
  write
  \begin{align*}
    |\supp \alpha'|&\le |h(S)|+C |h(S)\cap W| \\ 
    &\le |S\setminus W|+(C+1)|h(S)\cap W|\\
    &\le |S\setminus W|+\frac{ (C+1) |S\cap W|}{k}\\
    &\le |S\setminus W|+\frac{|S\cap W|}{2}.
  \end{align*}
  Consequently, by \eqref{eq:URdecompLower}
  \[|\supp \alpha|-|\supp \alpha'|\ge \frac{|S\cap W|}{2}\gtrsim r^d\].

  If $r>1$, then 
  \[|E|\lesssim r^d\lesssim |\supp \alpha|-|\supp \alpha'|.\]
  If $r\le 1$, then $|\supp \alpha|-|\supp \alpha'|\ge 1$, because
  $|\supp \alpha|$ and $|\supp \alpha'|$ are both integers.  It
  follows that 
  \[|\supp \alpha|-|\supp \alpha'|\ge 1 \sim (\diam E)^d.\]
  In both cases, $\alpha'$ satisfies \eqref{eq:differenceBound}.

  Finally, to prove the theorem, we define a sequence of cellular
  cycles inductively.  Let $A_0=A$.  If we have defined $A_i$ and if
  $A_i\ne 0$, then, by applying the above argument with $\alpha=A_i$,
  we get a cycle $A_{i+1}\in C_d(\tau;\Z_2)$ and a uniformly
  rectifiable set $E_i$ such that $\supp(A_i-A_{i+1})\subset E_i$ and
  \[|\supp A_i|-|\supp A_{i+1}|\gtrsim |E_i|.\] We repeat this process
  until $A_n=0$.  This is guaranteed to happen eventually because
  $|\supp A_i|$ is a decreasing sequence of non-negative integers.  If
  $M_i=A_{i}-A_{i-1}$, then $A=\sum_i M_i$, $\supp M_i\subset E_i$,
  and
  \[\sum_i |E_i|\lesssim \sum_i |\supp A_i|-|\supp A_{i+1}| \lesssim |\supp A|=\mass A\]
  as desired.  
\end{proof} 

\section{The uniformly rectifiable case}\label{sec:URcase}

In this section, we complete the proof of
Theorem~\ref{thm:cellNObound} by proving Proposition~\ref{prop:URcase}.
All the implicit constants in this section will depend on $N$, $\nu$ and the
uniform rectifiability constant of $E$.

We will follow the outline sketched in Section~\ref{sec:URcaseSketch}.
Let $\Sigma$ be the QC complex subdividing $\R^N\times [1,\infty)$
into dyadic cubes that was constructed in Section~\ref{sec:defNot}.
Let $k$ be such that $2^{k}< \diam E\le 2^{k+1}$ and let
$\Delta=(\Delta_i)_{i=0}^k$ be a cellular cubical patchwork for $E$.
If $(\cB,\cG,\cF)$ is a coronization of $E$, then the patchwork and
the corona decomposition both correspond to partitions of
$\extend{E}=E\times [1,2^{k+1}]$.  Let
$\extend{Q}=Q\times [2^i, 2^{i+1}]$ for each $Q\in \Delta_i$, then the
$\extend{Q}$'s cover $\extend{E}$ and overlap only on their
boundaries.  Let $\extend{S}=\bigcup_{Q\in S} \extend{Q}$ for all
$S\in \cF$ and let $\partial \extend{Q}$ and $\partial \extend{S}$ be
the boundaries of $\extend{S}$ as subsets of $\extend{E}$.  Then
\[\extend{E}=\bigcup_{Q\in \cB} \extend{Q}\cup \bigcup_{S\in \cF}
\extend{S},\]
and, again, the sets in the union overlap only on their boundaries.

Let $\extend{A}=A\times [1,2^{k+1}]$.  We decompose $\extend{A}$
according to $(\cB,\cG,\cF)$.  For each pseudocube $Q\in \Delta$, let
$A_Q$ be the restriction of $A$ to $Q$ and let $\extend{A}_Q$ be the
restriction of $\extend{A}$ to $\extend{Q}$.  For each stopping-time
region $S\in \mathcal{F}$, let
\[\extend{A}_S=\sum_{Q\in S}\extend{A}_Q.\]
Then
\[\extend{A}=\sum_{Q\in \cB} \extend{A}_Q+\sum_{S\in \cF}\extend{A}_S\]
and 
\begin{align*}
  \partial\extend{A}&=\sum_{Q\in \cB} \extend{A}_Q+\sum_{S\in \cF}\extend{A}_S\\
  A\times[2^k]-A\times[1]&=\sum_{Q\in \cB} \extend{A}_Q+\sum_{S\in \cF}\extend{A}_S.
\end{align*}
We will approximate the terms in this equation by cellular chains in
$\Sigma$ to obtain the following lemma.  If $Q\in \Delta_i$, let
$s(Q)=2^i$; this is the ``side length'' of $Q$.  If
$W\subset \R^N\times[1,\infty)$, let $\nbhd W$ be the union of the
(closed) cells of $\Sigma$ that intersect $W$ and let
$\nbhd^k W=\nbhd \dots \nbhd W$ be the $k$-times iterated neighborhood
of $W$.

\begin{lemma}\label{lem:decompLemma}
  For any sufficiently small $\eta,\theta>0$, if $(\cB,\cG,\cF)$ is a
  coronization $(\cB,\cG,\cF)$ satisfying
  Definition~\ref{def:coronaDecomp}, there are:
  \begin{itemize}
  \item $C>0$ depending only on $N$, $\nu$, and the uniform rectifiability
    constant of $E$,
  \item a locally finite set of chains
    $\mathcal{T}\subset \CLip_*(\R^N\times[1,\infty))$ with a
    corresponding deformation operator $P$ as in
    Lemma~\ref{lem:simultFF}, and
  \item chains $D_0,D_Q,D_S\in \Ccell(\Sigma;\Z_\nu)$
  \end{itemize}
  such that
  \begin{enumerate}
  \item $A\times [1] = D_0 - \sum_{Q\in \cB} D_Q- \sum_{S\in \cF} D_S$.
  \item $\|D_0\|_1\le C$ and $\supp D_0\subset \R^N\times 2^{k+1}$.
  \item For each bad cube $Q\in \cB$, $\|D_Q\|_1\le C$ and
    $\supp D_0\subset \R^N\times [s(Q),2s(Q)]$.
  \item For any cell $\sigma\in \Sigma$, no more than $n$ elements of
    $\mathcal{T}$ intersect $\sigma$.  
  \item For each stopping-time region $S\in \cF$, let $\Gamma(S)$ be
    the corresponding Lipschitz graph and let
    $\extend{\Gamma}(S)=\Gamma(S)\times [1,\infty)$.  There is a
    $(d+1)$-chain $G_S\in \CLip_{d+1}(\extend{\Gamma}(S);\Z)$
    such that:
    \begin{itemize}
    \item $D_S\equiv P(\partial G_S)\pmod{\nu}$.
    \item $\supp \partial G_S\subset \nbhd^2 \partial \extend{S}$.
    \item The density of $G_S$ is bounded above.  That is,
      $\mass_{B_r(x)} G_S\lesssim r^{d+1}$ for all
      $x\in \R^N\times[1,\infty)$, $r>0$, where $\mass_{B_r(x)} G_S$ is
      as in \eqref{eq:defMassRestrict}.
    \end{itemize}
  \end{enumerate}
\end{lemma}  
We will prove the lemma in
Section~\ref{sec:decompLemmaProof}.

The nonorientability of the $D_0$ or $D_Q$ terms in the lemma is easy
to bound:
\begin{lemma}
  If $D_0$ is as in Lemma~\ref{lem:decompLemma}, then
  $\NO(D_0)\lesssim 2^{kd}$.  If $Q\in \cB$ and $D_Q$ is as in the
  lemma, then $\NO(D_Q)\lesssim s(Q)^d$.
\end{lemma}
\begin{proof}
  The cellulation $\Sigma$ divides $\R^N\times \{2^{k+1}\}$ into a grid of
  side length $2^{k}$, and since
  $D_0\in \Ccell_{d}(\R^N\times \{2^{k+1}\};\Z_\nu)$ is a cycle, it is the
  boundary of some $M\in \Ccell_{d+1}(\R^N\times \{2^{k+1}\};\Z_\nu)$.  By
  the isoperimetric inequality, we can choose $M$ such that $\|M\|_1$
  is bounded.  

  Let $M_\Z\in \Ccell_{d+1}(\R^N\times \{2^{k+1}\})$ be a chain with
  integer coefficients such that $M\equiv M_\Z\pmod{\nu}$ and
  $\|M_\Z\|_1=\|M\|_1$.  Since $\partial M_\Z\equiv \partial M=D_0$,
  the cycle $\partial M_\Z$ is a pseudo-orientation of $D_0$.
  The cells that make up $M_\Z$ are cubes with side length $2^{k}$, so
  $$\NO(D_0)\le \mass \partial M_\Z \lesssim 2^{kd}\|M_\Z\|_1\lesssim 2^{kd}.$$
  
  Similarly, if $D_Q$ is as in the lemma, then $D_Q=\partial M$ for
  some $M\in \Ccell_{d+1}(\R^N\times [s(Q),2s(Q)];\Z_\nu)$ such that
  $\|M\|_1\lesssim 1$.  If $M_\Z$ is a lift of $M$ to a chain with
  integer coefficients, then $M_\Z$ is a sum of cubes with side length
  between $s(Q)$ and $2s(Q)$, and 
  $$\NO(D_Q)\le \mass \partial M_\Z \lesssim
  s(Q)^d\|M_\Z\|_1\lesssim s(Q)^d.$$
\end{proof}

The nonorientability of $D_S$ is a little harder to bound.  Since
$D_S\equiv P(\partial G_S)\pmod{\nu}$, the cycle $P(\partial G_S)$ is
a pseudo-orientation of $D_S$, and we will prove the following lemma
in Section~\ref{sec:DSNO}:
\begin{lemma}\label{lem:DSNO}
  If $S\in \cF$ and $D_S$ and $G_S$ are as in
  Lemma~\ref{lem:decompLemma}, then 
  $$\NO(D_S)\le \mass P(\partial G_S)\lesssim s(Q(S))^d.$$
\end{lemma}

Given these lemmas, the proposition follows from the packing condition
on $(\cB,\cG,\cF)$.
\begin{proof}[Proof of Proposition~\ref{prop:URcase}]
  Let $\eta$ and $\theta$ be sufficiently small that the theorem
  holds.  By Theorem~\ref{thm:coronaUR}, there is a coronization
  $(\cB,\cG,\cF)$ satisfying Definition~\ref{def:coronaDecomp}, and
  the packing constants of the coronization depend only on the UR
  constant of $E$.  That is, $\sum_{Q\in \cB} |Q|\lesssim |E|$ and
  $\sum_{S\in \cF} |Q(S)|\lesssim |E|$.

  By Lemma~\ref{lem:decompLemma} and the lemmas above,
  \begin{align*}
    \NO(A)  &\le \NO(D_0) +\sum_{Q\in \cB} \NO(D_Q)+\sum_{S\in \cF}
              \NO(D_S)\\
            &\lesssim 2^{dk}+\sum_{Q\in \cB} s(Q)^d+\sum_{S\in \cF} s(Q(S))^d\\
            &\sim |E|+\sum_{Q\in \cB} |Q|+\sum_{S\in \cF} |Q(S)|\\
            &\lesssim |E|.
  \end{align*}
\end{proof}

\subsection{Preliminaries}

The proof of Proposition~\ref{prop:URcase} will use some lemmas about
coverings of pseudocubes and stopping-time regions.  We collect these
lemmas here.  We assume throughout that $\Delta$ is a cellular cubical
patchwork and that $(\cF,\cG,\cB)$ is a coronization with implicit
constants bounded by the UR constant of $E$ and the ambient dimension
$N$.

\begin{lemma}\label{lem:pseudFiniteness}
  For any $k>0$, the sets $\nbhd^k \extend{Q}$ and
  $\nbhd^k \extend{S}$ have multiplicity bounded by a function of the
  UR constant of $E$ as $Q$ ranges over $\Delta$ and $S$ ranges over
  $\cF$.  Any cell $\sigma\subset \Sigma$ intersects only boundedly
  many such sets.

  Conversely, for any $\delta>0$ and any $Q\in \Delta$, the set
  $\nbhd^k \extend{Q}$ intersects only boundedly many cells of
  $\Sigma$.
\end{lemma}
\begin{proof}
  First, we claim that the $\nbhd \extend{Q}$'s have bounded
  multiplicity.  Let $\sigma\subset \Sigma$ be a top-dimensional cell
  of $\Sigma$ with side length $2^i$.  If $\extend{Q}$ intersects
  $\sigma$, then $Q\in \Delta_{i-1}\cup \Delta_i\cup \Delta_{i+1}$ and
  $Q$ intersects the projection of $\sigma$ to $\R^N$.  This
  projection is a cube with side length $2^i$, and the number of
  pseudocubes in $\Delta_i$ (resp.\ $\Delta_{i-1}$, $\Delta_{i+1}$)
  that intersect such a cube is bounded in terms of the patchwork
  constants of $\Delta$.  

  Since $\Sigma$ has bounded degree, the sets $\nbhd^k\extend{Q}$ also
  have bounded multiplicity.  The sets $\extend{S}$ are unions of the
  $\extend{Q}$, so the sets $\nbhd^k\extend{S}$ also have bounded
  multiplicity. 

  Finally, if $Q\in \Delta$, then $\extend{Q}$ is a subset of
  $\R^N\times [2^i,2^{i+1}]$ with $\diam \extend{Q}\sim s(Q)$, so it
  intersects only boundedly many cells of $\Sigma$.  It follows that
  $\nbhd \extend{Q}$ intersects only boundedly many cells of $\Sigma$.
  Since $\Sigma$ has bounded degree, $\nbhd^k \extend{Q}$ also
  intersects only boundedly many cells of $\Sigma$.
\end{proof}

\begin{lemma}
  If $(x,t)\in \R^N\times [1,\infty)$, then $B_{t/4}(x)\subset \nbhd^2 (x,t)$.
\end{lemma}
\begin{proof}
  The set $\nbhd (x,t)$ contains a dyadic cube $\sigma$ of side length
  $2^i$ such that $x\in \sigma$ and $t\le 2^{i+1}$.  The set
  $\nbhd \sigma$ contains all the neighbors of $\sigma$, so it
  contains every $y$ such that $d(y,\sigma)\le 2^{i-1}$.  It follows
  that $B_{t/4}(x)\subset \nbhd \sigma\subset \nbhd^2 (x,t)$.
\end{proof}

For the last lemma, we define the \emph{$r$-covering number} of a
space $U$, denoted $\cov_r(U)$, to be the minimum number of closed
balls of radius $r$ necessary to cover $U$.  Note that any $2r$-ball
can be covered by $\sim 1$ balls of radius $r$, so
\[\cov_r(U)\sim \cov_{2r}(U).\]
Furthermore, coverings of $U_1$ and $U_2$ can be combined to get a covering of
$U_1\times U_2$, so
\begin{equation}\label{eq:covProduct}
  \cov_r(U_1\times U_2)\lesssim \cov_r(U_1)\cov_r(U_2).
\end{equation}
For any subset $U\subset \R^N\times [1,\infty)$ and any $0<\delta<1$,
let
\begin{equation}\label{eq:Ndelta}
  N_\delta(U)=\bigcup_{(x,t)\in U} B((x,t),\delta t).
\end{equation}
\begin{lemma}\label{lem:smallNeighborhoods}
  If $Q\in \Delta$ and $C'>1$ is the constant in
  \eqref{eq:PatchworkSmallBdry}, then for all $\delta\in (0,1)$, we have
  \begin{equation}\label{eq:smallNeighborhoods}
    \HC^{d+1}(N_\delta(\partial\extend{Q}))\lesssim \delta^{1/C'}s(Q)^{d+1}.
  \end{equation}
\end{lemma}
\begin{proof}
  Let $Q\in \Delta_i$.  We write $\partial \extend{Q}=U_1\cup U_2,$
  where
  \begin{align*}
    U_1&=\partial Q \times [2^i,2^{i+1}]\\
    U_2&=Q\times \{2^{i},2^{i+1}\}.
  \end{align*}
  Since $U_j\subset \R^N\times [2^i,2^{i+1}]$, we can construct a
  covering of $N_\delta(U_j)$ by covering $U_j$ by balls of radius
  $\delta 2^{i+1}$, then doubling the radius of each ball.  That is, 
  \[\cov_{\delta 2^i}(N_\delta(U_j))\sim \cov_{\delta 2^i}(U_j).\]
  
  By Lemma~\ref{lem:pseudocubeBoundary},
  $\cov_{\delta 2^i}(U_1) \lesssim \delta^{1/C'-d}$, so by
  \eqref{eq:covProduct}, 
  \[\cov_{\delta 2^i}(U_1) \lesssim \delta^{1/C'-d}\cdot \delta^{-1},\]
  and
  \begin{align*}
    \HC^{d+1}(N_\delta(U_1)) &\lesssim (\delta 2^i)^{d+1} \cdot \delta^{1/C'-d-1}\\
                             &\lesssim \delta^{1/C'} 2^{i(d+1)}.
  \end{align*}

  The bound on $U_2$ follows similarly.  By the Ahlfors
  regularity of $E$, we have
  $$\cov_{\delta 2^i}(U_2)\sim \cov_{\delta 2^i}(Q)\lesssim
  \delta^{-d},$$
  so
  \begin{align*}
    \HC^{d+1}(N_\delta(U_2)) &\lesssim (\delta 2^i)^{d+1} \delta^{-d}\\
    &\lesssim \delta 2^{i(d+1)}\le \delta^{1/C'}2^{i(d+1)}.
  \end{align*}
  This proves the desired bound.
\end{proof}

\subsection{Proof of Lemma~\ref{lem:decompLemma}}
\label{sec:decompLemmaProof}
Let $0<\eta,\theta<1$ be small constants to be chosen later and let
$(\cB,\cG,\cF)$ be a cellular corona decomposition of $E$, based on
$\Delta$, with constants $\eta$ and $\theta$.  

First, we construct $P$ and the chains $D_\cdot$'s.  The deformation
operator $P$ will approximate a locally finite set of chains
$\mathcal{T}\subset \CLip_*(\R^N\times[1,\infty))$ that we will
construct in the course of the proof.  Specifically, $\cT$ will
consist of $A\times [1],A\times [2^{k+1}]$, $\extend{A}_Q$ and
$\partial \extend{A}_Q$ for all $Q\in \Delta$, $\extend{A}_S$ and
$\partial \extend{A}_S$ for all $S\in \cF$, and eight auxiliary chains
for each $S$, consisting of chains $G_S$, $G^\nu_S$, $W_S$, $W'_S$ and
their boundaries.  To avoid circularity, none of these chains will depend
on the choice of $P$.  Their supports will all lie in
$\nbhd^2 \extend{S}=\nbhd \nbhd \extend{S}$, so by
Lemma~\ref{lem:pseudFiniteness}, the multiplicity of $\cT$ is bounded.

Let $P$ be a deformation operator approximating $\cT$.  Since the
multiplicity of $\cT$ is bounded by a constant depending on dimension
and the UR constant of $E$, we can choose $C$ sufficiently large that
Lemma~\ref{lem:simultFF} holds with constant $C$.

Let $D_0=P(A\times [2^{k+1}])$, $D_Q=\partial P(\extend{A}_Q)$ for all
$Q\in \Delta$ and $D_S=\partial P(\extend{A}_S)$ for all $S\in \cF$.
Then
\begin{align*}
  P(\partial \extend{A})
  &=P(\sum_{Q\in \cB} \partial\extend{A}_Q+\sum_{S\in \cF} \partial
    \extend{A}_S)\\
  D_0-A\times [1] &= \sum_{Q\in \cB} D_Q+ \sum_{S\in \cF} D_S\\
  A\times [1] &= D_0 - \sum_{Q\in \cB} D_Q-\sum_{S\in \cF} D_S
\end{align*}
as desired.

The desired properties of $D_0$ and the $D_Q$'s follow directly.
Since $D_0$ is a chain in $\R^N\times 2^{k+1}$, it is a sum of
$d$-cells of volume $2^{kd}$, and we have
\begin{align*}
  \|D_0\|_1&\lesssim \frac{\mass D_0}{2^{kd}}\\ 
           &\lesssim \frac{\mass(A\times [2^{k+1}])}{2^{kd}}\\
           &\lesssim 1.
\end{align*}
Likewise, $P(\extend{A}_Q)$ approximates a $(d+1)$-chain in
$\R^N\times [s(Q),2s(Q)]$, so it is supported in
$\R^N\times [s(Q),2s(Q)]$ and is a sum of cells of volume at least
$(s(Q)/2)^{d+1}$.  Thus
\begin{align*}
  \|D_Q\|_1&\lesssim \|P(\extend{A}_Q)\|_1\\
           &\lesssim \frac{\mass \extend{A}_Q}{s(Q)^{d+1}}\lesssim 1.
\end{align*}

Finally, we prove that $D_S$ satisfies the desired properties.  Let
$S\in \cF$ be a stopping-time region and let
$\Gamma=\Gamma(S)\subset \R^N$ be the corresponding Lipschitz graph.
Let $\extend{\Gamma}=\Gamma\times[1,\infty)$.  Let $V\subset \R^N$ and
$h:V\to V^\perp$ be the subspace and function such that
$\Gamma=\{v+h(v)\mid v\in V\}$, and let $f:\R^N\to \Gamma$ be the
projection $f(v+w)=v+h(v)$ for all $v\in V$ and $w\in V^\perp$.

We will first show that $D_S$ satisfies a mod-$\nu$ version of the desired
property, then replace the mod-$\nu$ chain with an integral one.  
\begin{lemma}  
  If $\theta$ is sufficiently small and
  \[G^\nu_S=\extend{f}_\sharp(\extend{A}_S)\in\CLip_{d+1}(\extend{\Gamma};\Z_\nu),\]
  then $D_S= P(\partial G^\nu_S)$ and
  $\supp \partial G^\nu_S\subset \nbhd^2 \partial \extend{S}$.
\end{lemma}
\begin{proof}
  Because $S$ is a stopping-time region and $\Gamma$ is the graph of
  an $\eta$-Lipschitz function, Definition~\ref{def:coronaDecomp}
  implies
  \begin{equation}\label{eq:extendFdisplace}
    d((x,t),\extend{f}(x,t))=d(x,f(x))\le 2 d(x,\Gamma)\le 2\theta
    t.
  \end{equation}
  for all $(x,t)\in \extend{S}$.  If $N_\delta(U)$ is as in
  \eqref{eq:covProduct} and if $\theta<\frac{1}{4}$, then
  $$\extend{f}(x,t)\in N_{2\theta}(\partial \extend{S})\subset \nbhd^2
  (x,t).$$

  It follows that $\supp G^\nu_S\subset \nbhd^2 \extend{S}$, so we may
  add $G^\nu_S$ and $\partial G^\nu_S$ to $\cT$ without affecting its
  bounded multiplicity.  In fact, because $A$ is a cycle, we have
  $\supp \partial \extend{A}_S\subset \partial \extend{S}$, and
  $$\supp \partial G^\nu_S\subset f(\partial \extend{S})\subset \nbhd^2 \partial \extend{S}.$$

  Let $W_S\in \CLip_{d+1}(\R^N\times[1,\infty);\Z_\nu)$ be the
  straight-line homotopy between $\partial \extend{A}_S$ and
  $\partial{G^\nu_S}=\extend{f}_\sharp(\partial \extend{A}_S)$.  As
  above, $\supp W_S \subset \nbhd^2 \extend{S}$, so adding $W_S$ and
  $\partial W_S$ to $\cT$ does not affect the bounded multiplicity of
  $\cT$.

  We claim that if $\theta$ is sufficiently small, then $\supp W_S$
  has small Hausdorff content and $P(W_S)=0$.  If
  $\sigma\in \Sigma^{(d+1)}$ is a $(d+1)$-cell of side length $2^i$,
  then
  \begin{align*}
    \supp W_S\cap \nbhd \sigma 
    &\subset N_{2\theta}(\partial \extend{S}) \cap \nbhd \sigma \\
    & \subset \bigcup_{Q\in \Delta} N_{2\theta}(\partial\extend{Q}) \cap \nbhd \sigma 
  \end{align*}
  Since $\theta<\frac{1}{4}$, there are only boundedly many
  $Q\in \Delta$ such that $N_{2\theta}(\partial\extend{Q})$ intersects
  $\nbhd \sigma$.  All of these have $s(Q)\sim 2^i$, so by
  Lemma~\ref{lem:smallNeighborhoods},
  $$\HC^{d+1}(\supp W_S\cap \nbhd \sigma) \lesssim \theta^{1/C'}2^{i(d+1)}.$$
  If $\theta$ is sufficiently small, then Lemma~\ref{lem:simultFF}.(6)
  implies that
  \[\HC^{d+1}(\supp P(W_S)\cap \sigma)< \HC^{d+1}(\sigma),\]
  so the support of $P(W_S)$ does not contain $\sigma$.  But this
  argument applies to any $(d+1)$-cell $\sigma$, so $P(W_S)=0$!  It
  follows that
  $$P(\partial W_S)=P(\partial \extend{A}_S-\partial G^\nu_S)=0$$
  and thus that $D_S=P(\partial \extend{A}_S)=P(\partial G^\nu_S)$.
\end{proof}

Since $G^\nu_S$ is a Lipschitz $(d+1)$-chain in a $(d+1)$-dimensional
Lipschitz graph, there is an integer $(d+1)$-chain $G_S$ with nearly
the same boundary.  In fact, $G_S$ will be a cellular approximation of
$G^\nu_S$.
\begin{lemma}
  For any $\epsilon>0$, there are chains
  $G_S,W'_S\in \CLip_{d+1}(\extend{\Gamma};\Z)$ such that:
  \begin{itemize}
  \item $\partial W'_S\equiv \partial G_S-\partial G^\nu_S\pmod{\nu}$
  \item $\mass W'_S<\epsilon$
  \item $\supp W'_S \subset \nbhd^2 \partial \extend{S}$
  \item $\mass_{B_r(x)} G_S\lesssim r^{d+1}$ for all
    $x\in \R^N\times[1,\infty)$, $r>0$
  \end{itemize}
\end{lemma}
\begin{proof}
  The graph $\extend{\Gamma}$ is bilipschitz equivalent to
  $\R^{d}\times [1,\infty)$, so we may give it the structure of a QC
  complex by letting $\kappa$ be the image of a grid in
  $\R^{d}\times [1,\infty)$ of side length $\epsilon'$.  By
  Theorem~\ref{thm:WhiteDeform}, there is a cellular
  chain $P^\nu\in \Ccell_{d+1}(\kappa;\Z_\nu)$ approximating $G^\nu_S$
  and a ``homotopy'' $H^\nu\in \CLip_{d+1}(\extend{\Gamma};\Z_\nu)$
  from $\partial G^\nu_S$ to $\partial P^\nu$ satisfying
  \begin{align*}
    \partial H^\nu&=\partial P^\nu-\partial G^\nu_S\\
    \mass H^\nu&\lesssim \epsilon' \mass \partial G^\nu_S\\
    \supp H^\nu&\subset \nbhd^2 \partial G^\nu_S\subset
                 \nbhd^2\partial \extend{S}.
  \end{align*}
  Choose $\epsilon'$ sufficiently small that $\mass H^\nu<\epsilon$.

  Fix an orientation on $\extend{\Gamma}$; we will use this
  orientation to lift $P^\nu$ to a chain with integer coefficients
  that has the same boundary.  (See also
  \cite{FedererRealVariational}.)  We orient the $(d+1)$-cells of
  $\kappa$ to match the orientation of $\extend{\Gamma}$.  This fixes
  signs for all of the coefficients of $(d+1)$-chains, and we define
  $G_S \in \Ccell_{d+1}(\kappa)$ to be the unique integer chain such
  that $G_S\equiv P^\nu\pmod{\nu}$ and the coefficients of $G_S$ are
  all between $0$ and $\nu-1$.  If $\sigma$ and $\sigma'$ are
  neighboring $(d+1)$-cells in $\sigma$, then they have the same
  coefficient in $G_S$ if and only if they have the same coefficient
  in $P^\nu$, so $\supp \partial G_S=\supp \partial P^\nu$.

  Let $W'_S\in \CLip_{d+1}(\extend{\Gamma};\Z)$ be a chain such that
  $W'_S\equiv H^\nu\pmod{\nu}$, $\supp W'_S=\supp H^\nu$, and
  $\mass W'_S=\mass H^\nu$.  Then $W'_S$ satisfies the first
  three conditions of the lemma.  To prove the last condition, note
  that the coefficients of $G_S$ are bounded, so
  $$\mass_{B_r(x)} G_S\le \nu |\extend{\Gamma}\cap B_r(x)|\lesssim
  r^{d+1}.$$
\end{proof}
Finally, if $\epsilon$ is sufficiently small, then $P(W'_S)=0$, so
$$P(\partial G_S)\equiv P(\partial G^\nu_S+\partial W'_S)=P(\partial G^\nu_S )=D_S,$$
as desired.

\subsection{Proof of Lemma~\ref{lem:DSNO}}
\label{sec:DSNO}


Finally, we bound $\NO(D_S)$.  Recall that
$$G_S\in \CLip_{d+1}(\extend{\Gamma}(S))$$
is a chain with integer coefficients and an upper bound on its density
and that $D_S\in \Ccell(\Sigma;\Z_\nu)$ is congruent modulo $\nu$ to
$P(\partial G_S)$.  The cycle $P(\partial G_S)$ is a
pseudo-orientation of $D_S$, so it suffices to show that
$$\mass P(\partial G_S)\lesssim s(Q(S))^d.$$

First, we note that the coefficients of $P(G_S)$ are bounded:
\begin{lemma}\label{lem:boundedCoeffs}
  If $G\in \CLip_{d+1}(\R^N\times[1,\infty))$ is a chain such that
  $\mass_{B_r(x)} G \lesssim r^{d+1}$ for all
  $x\in \R^N\times[1,\infty)$ and $r>0$, then the coefficients of
  $P(G)$ are bounded.
\end{lemma}
\begin{proof}
  Let $\sigma$ be a $(d+1)$-cell of $\Sigma$ and let $x_\sigma$ be the
  coefficient of $P(G)$ on $\sigma$.  By Lemma~\ref{lem:simultFF}.(\ref{it:localApproxNbhd})
  and the bound on the density of $G$, we have
  \begin{align*}
    |x_\sigma|&=\frac{\mass_\sigma P(G)}{\Haus^{d+1}(\sigma)} \\ 
              &\lesssim \frac{\mass_{\nbhd \sigma} G}{\Haus^{d+1}(\sigma)}\\
              &\lesssim \frac{(\diam \sigma)^{d+1}}{\Haus^{d+1}(\sigma)}\\
              &\lesssim 1.
  \end{align*}
\end{proof}
Since $\Sigma$ has bounded degree, the coefficients of $P(\partial
G_S)=\partial P(G_S)$ are also bounded.

Next, we bound the support of $P(\partial G_S)$.  By
Lemmas~\ref{lem:decompLemma} and \ref{lem:simultFF}, we have
$$\supp P(\partial G_S)\subset \nbhd^2 \partial \extend{S}.$$
If $L$ is a subcomplex of $\Sigma$, let
\[\size_d L=\sum_{\sigma \in (\nbhd L)^{(d)}} |\sigma|\]
be the total volume of the $d$-cells of $\nbhd L$.

Then: 
\begin{lemma}\label{lem:coverIsSize}
  Suppose that $U\subset X$ and $U_i=U\cap \R^N\times [2^i,2^{i+1}]$
  for $i=0,1,2,\dots$.  Then
  \[\size_d \nbhd^2 U \sim \size_d \nbhd U \lesssim \sum_{i=0}^k 2^{id}\cov_{2^i}U_i.\]
\end{lemma}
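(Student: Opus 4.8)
The plan is to bound $\size_d U=\sum_{\sigma\in(\nbhd U)^{(d)}}|\sigma|$ by sorting the $d$-cells of $\nbhd U$ according to which slab $X_i$ they belong to, and then converting a cell count into a covering number. I would first record the relevant local geometry of $\Sigma$: inside the slab $X_i=\R^N\times[2^i,2^{i+1}]$ the complex $\Sigma$ is cubulated by cubes of side length $2^i$, except that the interface $\R^N\times\{2^i\}$ is refined to side $2^{i-1}$. Consequently every top-dimensional cell $D$ of $\Sigma$ contained in $X_i$ has $\diam D\sim 2^i$, has only $\lesssim 1$ faces of dimension $d$, and each such face has $d$-volume $\sim 2^{id}$; moreover, since the cubes of $\Sigma$ inside $X_i$ all have side $\sim 2^i$, any ball of radius $r\lesssim 2^i$ in $\R^{N+1}$ meets only $\lesssim 1$ of them.

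Next I would reduce to counting top-dimensional cells. Any $d$-cell $\sigma\in(\nbhd U)^{(d)}$ lies in $\nbhd U$, hence is a face of some cell of $\Sigma$ that meets $U$; enlarging that cell to a top-dimensional cell containing it, we may take $\sigma$ to be a face of a top-dimensional cell $D$ of $\Sigma$ with $D\cap U\neq\emptyset$. Since the slabs $X_i$, $i=0,\dots,k$, cover $X$ and meet only along their boundary faces, such a $D$ is contained in $X_i$ for exactly one $i$, and because $X_i$ is closed we have $D\cap U=D\cap(U\cap X_i)=D\cap U_i$, so in fact $D\cap U_i\neq\emptyset$. As each such $D$ has $\lesssim 1$ faces of dimension $d$, each of volume $\sim 2^{id}$, this yields
\[
\size_d U\ \lesssim\ \sum_{i=0}^{k} 2^{id}\,\#\{\,D\subset X_i: D\ \text{a top cell of }\Sigma,\ D\cap U_i\neq\emptyset\,\}.
\]

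Finally I would estimate that cell count for fixed $i$. Cover $U_i$ by $\cov_{2^i}(U_i)$ closed balls of radius $2^i$. Every top cell $D\subset X_i$ with $D\cap U_i\neq\emptyset$ contains a point of one of these balls $B$, and because $\diam D\sim 2^i$, the cell $D$ is contained in the ball concentric with $B$ of radius $\sim 2^i$; by the local-geometry statement this enlarged ball meets only $\lesssim 1$ top cells of $X_i$. Hence the number of such $D$ is $\lesssim\cov_{2^i}(U_i)$, and substituting into the displayed bound gives $\size_d U\lesssim\sum_{i=0}^{k}2^{id}\cov_{2^i}(U_i)$, as desired.

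I do not expect a genuine obstacle here; the only bookkeeping to watch is that a $d$-cell lying on an interface $\R^N\times\{2^i\}$ is a face of top cells in both $X_{i-1}$ and $X_i$ and may therefore be counted in two slabs (harmless for an upper bound), and that the finer side length $2^{i-1}$ on the interfaces changes face volumes and covering counts only by universal factors absorbed into $\lesssim$.
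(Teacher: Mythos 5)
Your proof is correct and follows essentially the same route as the paper's: work slab by slab, note that the cells of $\Sigma$ near $X_i$ have side length $\sim 2^i$ so a ball of radius $2^i$ meets boundedly many of them, and conclude $\size_d U_i \lesssim 2^{id}\cov_{2^i}U_i$ before summing over $i$. The extra bookkeeping you carry out (reducing to top-dimensional cells meeting $U_i$, bounding the number and volume of their $d$-faces, and noting the harmless double-counting at the interfaces) is exactly what the paper's terser argument leaves implicit.
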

\begin{proof}
  The subcomplex $\nbhd U$ is a union of dyadic cubes.  For every
  dyadic cube $K$, we have $\size_d K=\size_d \nbhd K$, so
  $\size_d \nbhd U\sim \nbhd^2 U$.

  For any $i$, $\nbhd U_i$ is a union of dyadic cubes with side length
  $\sim 2^i$.  A covering of $U_i$ by balls of radius $2^i$ can be
  made into a cover of $\nbhd U_i$ by increasing the radius of balls
  by a factor of $\sqrt{N}$.  Each of the expanded balls intersects
  only boundedly many cells of $\nbhd U_i$, so
  \[\size_d \nbhd U_i \lesssim 2^{id}\cov_{2^i}U_i\]
  and
  \[\size_d U \lesssim \sum_{i=0}^k 2^{id}\cov_{2^i}U_i.\]
\end{proof}

We claim that
\begin{lemma}
  If $S\in \cF$ is a stopping-time region and
  $K=\nbhd^2 \partial \extend{S}$, then
  \[\size_d K\lesssim |Q(S)|.\]  
\end{lemma}
\begin{proof}
  For every pseudocube $Q\in \Delta$, let 
  $$M_Q=\bigcup_{Q'\subset Q}\extend{Q'}$$
  be the union of $\extend{Q}$ and all of its descendants and let 
  $$M'_Q=M_Q\setminus \extend{Q}.$$
  Let $S_\text{min}$ be the set of minimal pseudocubes in $S$.  Since
  $S$ is coherent, the elements of $S_\text{min}$ partition $Q(S)$.
  That is, they are all disjoint (since any two minimal pseudocubes
  are disjoint) and their union is $Q(S)$ (since if a pseudocube in
  $S$ is non-minimal, all its children are contained in $S$.)
  Note that we are using the fact that $\Delta$ is a cellular cubical
  patchwork and thus has a bottom level.

  If $Q\subset Q(S)$, but $Q\not \in S$, then $Q$ is a descendant of
  one of the $S_{\text{min}}$.  Therefore,
  \[\extend{S}=\bigcup_{Q\in S}\extend{Q}=M_{Q(S)}\setminus
  \bigcup_{Q\in S_{\text{min}}} M'_Q,\]
  and 
  \begin{align*}
    \partial \extend{S}&\subset \partial M_{Q(S)}\cup \bigcup_{Q\in
                         S_{\text{min}}} \partial M'_Q\\
    &\subset \partial M_{Q(S)}\cup \bigcup_{Q\in
                         S'_{\text{min}}} \partial M_Q,
  \end{align*}
  where $S'_{\text{min}}$ consists of the children of elements of
  $S_{\text{min}}$.  
  
  It follows that 
  \begin{equation}\label{eq:sizeDecomp}
    \size_d \nbhd^2 \partial \extend{S}\lesssim \sum \size_d
    \nbhd^2 \partial M_{Q(S)}+ \sum_{Q\in S'_{\text{min}}} \size_d
    \nbhd^2 \partial M_Q.
  \end{equation}
  We claim that for all $Q\in \Delta$, 
  $$\size_d \nbhd^2 \partial M_Q\lesssim |Q|.$$

  Let $s(Q)=2^i$ and write
  \[\partial M_Q=Q\times 2^i\cup \partial Q \times [1,2^i].\]
  By Lemma~\ref{lem:coverIsSize}, we have
  \[\size_d \nbhd^2 \partial M_Q \lesssim 2^{id}\cov_{2^i}(Q)+\sum_{j=0}^{i-1} 2^{jd} \cov_{2^j}(\partial Q\times [2^{j},2^{j+1}]).\]
  Since $\diam Q\sim 2^i$,
  \[\cov_{2^i}(Q\times \{2^i\}) \sim 1.\]
  If $j<i$ and $C'>0$ is as in Lemma~\ref{lem:pseudocubeBoundary}, then
  \[\cov_{2^j}(\partial Q\times[2^j,2^{j+1}])\lesssim 2^{(i-j)(d-1/C')},\]
  so
  \[\size_d \nbhd^2 \partial M_Q \lesssim 2^{id} + \sum_{j=0}^i 2^{id}
  2^{-(i-j)/C'}\lesssim 2^{id}\sim |Q|.\]

  Finally, by \eqref{eq:sizeDecomp}, we have
  $$\size_d K \lesssim |Q(S)| + \sum_{Q\in
    S'_{\text{min}}}|Q|.$$
  The children of the minimal pseudocubes of $S$ are all disjoint,
  so $\size_d K \lesssim |Q(S)|.$
\end{proof}
Then, by Lemma~\ref{lem:boundedCoeffs}, we have
$$\mass P(\partial G_S)\lesssim |\supp P(\partial G_S)|\lesssim
\size_d K\lesssim |Q(S)|.$$
Since $P(\partial G_S)$ is a pseudo-orientation of
$D_S$, this completes the proof of Lemma~\ref{lem:DSNO}.

\appendix

\section{Proof of Lemma~\ref{lem:simultFF}}\label{sec:simultFFProof}

In this section, we prove Lemma~\ref{lem:simultFF}.  None of the ideas
here are original; our proof follows similar lines to the argument of
Federer and Fleming \cite{FedFlemNormInt}, White's deformation lemma
\cite{WhiteDeform}, the argument used by David and Semmes to prove
Proposition~3.1 in \cite{DSQuasi}, and the proof of a cellular version
of the Deformation Theorem in Chapter~10 of \cite{ECHLPT}.

We recall some notation.  If $D\subset \R^d$ is a measurable set,
$d\le N$, and $\alpha:D\to B$ is Lipschitz, then by the arguments in
Section~\ref{sec:defNot}, the jacobian determinant $J_\alpha$ is
defined almost everywhere in $D$.  We define
\[\vol^d \alpha=\int_{x\in D} |J_\alpha(x)|\;dx.\]
Similarly, if $\Sigma$ is a QC complex, $B\subset \Sigma$ is a Borel
set, and $A$ is a Lipschitz chain in $\Sigma$, we let $\mass_B A$ be
the mass of the restriction of $A$ to $B$.  Let $\mu$ be Lebesgue measure on $\R^N$.

If $\Sigma$ is a QC complex, then each cell of $\Sigma$ is
bilipschitz equivalent to a ball, and if $B$ is a ball, we can
construct a map $p$ that takes all but one point of $B$ to its
boundary by choosing a random point $y\in B$, then projecting
$B\setminus \{y\}$ to its boundary along straight lines.  In the
following lemma, we use Fubini's theorem to bound the average amount
that this random projection increases the mass of a chain or the
Hausdorff content or Hausdorff measure of a set.  Note that we need to
smooth the projection on a ball of radius $\epsilon$ to make it a Lipschitz map
defined on all of $B$.
\begin{lemma}\label{lem:singInt}
  Let $B=B(0,r)$ be a ball in $\R^N$ and let $\gamma B=B(0,\gamma r)$
  for any $\gamma>0$.  For any $y\in B/2$ and any unit vector $v\in
  S^{N-1}$, let $t:B\setminus \{y\}\to \partial B$ be the projection
  of $B\setminus \{y\}$ to its boundary.  That is, $t(x)$ is the
  endpoint of the ray from $y$ to $x$.  Let
  $0<\epsilon<r/2$ and let $p_{y}:B\to B$ be the map
  \[p_{y}(x)=\begin{cases}
    y+\min\{1,\frac{\rho}{\epsilon}\} (t(x)-y) & x\ne y \\
    y & x=y
  \end{cases},\]
  where $\rho=d(x,y)$.  This is a Lipschitz map that sends $B(y,\epsilon)$
  surjectively to $\inter B$ and sends $B\setminus B(y,\epsilon)$ to
  $\partial B$.

  If $D\subset \R^d$ is a measurable set, $\alpha:D\to B$ is
  Lipschitz, and $U\subset B$ is a set with $\HC^d(U)<\infty$, then
  for all $\epsilon\in(0,r/2)$, we have
  \begin{align}
    \label{eq:singInt}\frac{1}{\mu (B/2)} \int_{y\in
      B/2}\vol^d(p_y\circ \alpha)\;dy& \lesssim_{N} \vol^d \alpha.\\
    \label{eq:HCsingInt}
    \frac{1}{\mu (B/2)} \int_{y\in B/2}\HC^{d}(p_y(U))\;dy& \lesssim_{N} \HC^{d}(U)
  \end{align}
\end{lemma}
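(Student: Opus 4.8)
\emph{Overview.} Both inequalities will follow from a single pointwise estimate on the local expansion of $p_y$ together with Fubini's theorem. The key estimate is that, wherever $Dp_y$ exists,
\[\Bigl\|{\textstyle\bigwedge^{d}}Dp_y(z)\Bigr\|\ \lesssim_N\ \Bigl(\frac{r}{\max\{|z-y|,\epsilon\}}\Bigr)^{d}\qquad(z\in B),\]
and, more crudely, that $p_y$ is $\lesssim_N r/\max\{\rho',\epsilon\}$--Lipschitz on every shell $\{|x-y|\ge\rho'\}$. To see this, let $t=t_y\colon B\setminus\{y\}\to\partial B$ be the radial projection, so $p_y=t_y$ on $B\setminus B(y,\epsilon)$ and $p_y(x)=y+\tfrac{s(x)}{\epsilon}(x-y)$ on $B(y,\epsilon)$, where $s(x)=|t_y(x)-y|$. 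Since $y\in B/2$, the factor $s$ lies in $[r/2,3r/2]$ and is $O(1)$--Lipschitz; hence $Dt_y(z)$ annihilates the radial direction and expands transverse directions by $\sim s(z)/|z-y|\sim r/|z-y|$, while differentiating $x\mapsto y+\tfrac{s(x)}{\epsilon}(x-y)$ directly shows $p_y$ is $\lesssim_N r/\epsilon$--Lipschitz on $B(y,\epsilon)$. (When $d=N$ the rank of $Dt_y$ is $<N$, so the $d$--Jacobian above vanishes off $B(y,\epsilon)$; this is what saves that case below.)

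\emph{The mass bound \eqref{eq:singInt}.} For a.e.\ $u\in D$ the chain rule gives $|J_{p_y\circ\alpha}(u)|\le\|{\textstyle\bigwedge^{d}}Dp_y(\alpha(u))\|\,|J_\alpha(u)|$; the null sets on which $p_y$ fails to be differentiable — the point $y$ and the sphere $\{|x-y|=\epsilon\}$ — are handled exactly as in the construction of $\vol^d$ for maps into polyhedral complexes in Section~\ref{sec:defNot}, and contribute nothing. By Fubini,
\[\int_{B/2}\vol^d(p_y\circ\alpha)\,dy\ \lesssim_N\ \int_D|J_\alpha(u)|\,I(\alpha(u))\,du,\qquad I(z):=\int_{B/2}\Bigl\|{\textstyle\bigwedge^{d}}Dp_y(z)\Bigr\|\,dy,\]
so it suffices to show $I(z)\lesssim_N r^{N}$ for every $z\in B$. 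For $d<N$, bound the integrand by $(r/\max\{|z-y|,\epsilon\})^{d}$ and split $B/2\subset B(z,2r)$ into $|z-y|<\epsilon$ and $\epsilon\le|z-y|\le 2r$; this gives $I(z)\lesssim_N r^{d}\epsilon^{N-d}+r^{d}\int_{0}^{2r}\sigma^{N-1-d}\,d\sigma\lesssim_N r^{N}$, using $\epsilon<r$ and $N>d$. For $d=N$ the integrand vanishes unless $y\in B(z,\epsilon)$, so $I(z)\lesssim_N\epsilon^{N}(r/\epsilon)^{N}=r^{N}$. Since $r^{N}\sim_N\mu(B/2)$, this proves \eqref{eq:singInt}.

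\emph{The Hausdorff--content bound \eqref{eq:HCsingInt}.} Fix a cover $U\subset\bigcup_i V_i$ with $\sum_i(\diam V_i)^{d}$ within an arbitrary $\delta>0$ of $\HC^{d}(U)$, and set $\delta_i=\diam V_i$. The Lipschitz estimates of the Overview give, for every $y\in B/2$, $\diam p_y(V_i)\lesssim_N\min\{r,\ r\delta_i/\max\{d(y,V_i),\epsilon\}\}$; the essential point is that even when $V_i$ meets $B(y,\epsilon)$, the diameter of $p_y(V_i)$ is still controlled by the $r/\epsilon$--Lipschitz bound, so arbitrarily small cover sets sitting next to $y$ do not blow up. Integrating over $y\in B/2$ and splitting the domain according to whether $\max\{d(y,V_i),\epsilon\}$ is $\le\delta_i$ or larger (and dyadically in $\max\{d(y,V_i),\epsilon\}$ in the latter case), by the same computation used for $I(z)$, yields $\int_{B/2}(\diam p_y V_i)^{d}\,dy\lesssim_N r^{N}\delta_i^{d}$; for $d=N$ one uses instead that $t_y$ maps into the $(N-1)$--sphere, so only the $V_i$ meeting $B(y,\epsilon)$ contribute and the same bound follows. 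Since $\HC^{d}(p_y(U))\le\sum_i(\diam p_y V_i)^{d}$, summing over $i$ and letting the cover tend to optimal gives $\int_{B/2}\HC^{d}(p_y(U))\,dy\lesssim_N r^{N}\HC^{d}(U)\sim_N\mu(B/2)\HC^{d}(U)$.

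\emph{Main obstacle.} The whole argument rests on the pointwise expansion bound, and the single genuinely delicate ingredient is the $\epsilon$--regularization: $p_y$ is strongly expanding near $y$, and one must verify it is nonetheless only $\lesssim_N r/\epsilon$--Lipschitz on $B(y,\epsilon)$. Without this, the content estimate in the third step fails for tiny cover sets placed beside $y$ and the constant acquires a spurious $\log(r/\epsilon)$; the rest is routine splitting of integrals (where $N\ge d$ is exploited) and standard bookkeeping for the a.e.\ chain rule across null sets.
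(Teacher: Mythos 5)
Your proof is correct and follows essentially the same route as the paper: the pointwise bound $\|Dp_y(z)\|\lesssim_N r/\max\{|z-y|,\epsilon\}$, the chain rule plus Fubini, and the splitting of the $y$-integral (with the $d=N$ case rescued by the fact that $p_y(B\setminus B(y,\epsilon))\subset \partial B$), while for \eqref{eq:HCsingInt} your per-cover-set diameter estimate and its integration in $y$ are the same computation the paper carries out for balls $B_0\subset B$. One small slip: the factor $s(x)=|t(x)-y|$ is not $O(1)$-Lipschitz near $y$ (its gradient grows like $r/|x-y|$), but this is harmless since the resulting term $|x-y|\,|\nabla s(x)|/\epsilon$ is still $\lesssim r/\epsilon$, so your claimed $r/\epsilon$-Lipschitz bound on $B(y,\epsilon)$ stands.
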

David and Semmes prove an inequality similar to \eqref{eq:HCsingInt} in Chapter~3 of \cite{DSQuasi}.
\begin{proof} 
  If $\rho=d(x,y)$ and $\|Dp_y(x)\|$ is the operator norm of the
  derivative of $p_y$ at $x$, then
  \[\|Dp_y(x)\|\lesssim \frac{r}{\max\{\rho,\epsilon\}}.\]

  To prove \eqref{eq:singInt} when $d<N$, we write
  \begin{align*}
    \int_{y\in B/2}&\vol^d (p_y\circ \alpha) \;dy
    =\int_{y\in B/2} \int_{x\in D}|J_{p_y\circ \alpha}(x)| \;dx\;dy\\
    &\lesssim \int_{y\in B/2} \int_{x\in D} \|Dp_y(\alpha(x))\|^d |J_\alpha(x)| \;dx\;dy\\
    &\lesssim \int_{y\in B/2} \int_{x\in D}\left(\frac{r}{\max\{\rho,\epsilon\}}\right)^d  |J_\alpha(x)| \;dx\;dy\\
    &=  r^d \int_{x\in D} |J_\alpha(x)| \int_{y\in B/2} \min\{\rho^{-d},\epsilon^{-d}\} \;dy\;dx
  \end{align*}
  using Fubini's Theorem in the last step.  Since
  $\min\{\rho^{-d},\epsilon^{-d}\}=\epsilon^{-d}$ only when $\rho\le \epsilon$, we can bound the last integral by
  \begin{align}
\notag    \int_{y\in B/2} \min\{\rho^{-d},\epsilon^{-d}\} \;dy&\lesssim \int_{y\in B(x,2r)} \rho^{-d}\;dy+\int_{y\in B(x,\epsilon)}\epsilon^{-d}\;dy\\
    &\label{eq:avgLip}\lesssim \int_{0}^{2r}
    \rho^{N-1-d}\;d\rho+\epsilon^{N-d} \\
\notag    & \lesssim r^{N-d}
  \end{align}
  We thus have
  \[\frac{1}{\mu (B/2)} \int_{y\in B/2}\vol^d (p_y\circ \alpha)
  \;dy\lesssim \frac{r^N}{\mu (B/2)} \int_{x\in D} |J_\alpha(x)|\;dx
  \lesssim \vol^d \alpha,\]
  so \eqref{eq:singInt} holds when $d<N$.

  If $d=N$, the integral in \eqref{eq:avgLip} diverges, so we
  need a different argument.  In this case, $p_y$ sends $B\setminus
  B(y,\epsilon)$ to $\partial B$, which is $(d-1)$-dimensional, so the
  part of $\alpha(\Delta)$ outside of $B(y,\epsilon)$ doesn't
  contribute to $\vol^d (p_y\circ \alpha)$.  Therefore, using Fubini's
  Theorem as before, 
  \begin{align*}
    \int_{y\in B/2} \vol^d (p_y\circ \alpha) \;dy 
    &\lesssim r^N \int_{x\in D} |J_\alpha(x)| \int_{y\in B(x,\epsilon)} \epsilon^{-N} \;dy\;dx\\
    &\lesssim r^N \int_{x\in D} |J_\alpha(x)| \;dx \lesssim \mu(B) \vol^d \alpha.
  \end{align*}
  This proves \eqref{eq:singInt}.

  Next, consider \eqref{eq:HCsingInt}.  It suffices to show that for
  any ball $B_0=B(x,r_0)$ with $r_0\le r$, we have
  \begin{equation}\label{eq:HCsingleBall}
    \frac{1}{\mu(B/2)}\int_{y\in B/2}\HC^{d}(p_y(B_0))\;dy\lesssim r_0^d.
  \end{equation}
  If $d<N$, then let $\rho=d(x,y)$ as before.  If $\rho>2r_0$, then
  $d(y,B_0)\gtrsim \rho$, so 
  $\Lip(p_y|_{B_0})\lesssim \frac{r}{\rho}$, and
  \[\HC^{d}(p_y(B_0))\lesssim \left(\frac{r r_0}{\rho}\right)^d.\]
  On the other hand, if $\rho\le 2r_0$, then 
  \[\HC^{d}(p_y(B_0)) \le \HC^d(B)\sim r^d.\]
  Therefore, 
  \begin{align*}
    \int_{y\in B/2}\HC^{d}(p_y(B_0))\;dy
    & \lesssim \int_{y\in B/2} \left(\frac{r r_0}{\rho}\right)^d  \;dy
    + \int_{y\in B(x,2r_0)}r^d \;dy\\
    & \lesssim r^N r_0^d+r_0^N r^d
    \le r^N r_0^d,
  \end{align*}
  using the fact proved above that $\int_{y\in B/2}
  \rho^{-d}\;dy\lesssim r^{N-d}.$

  If $d=N$, then $p_y$ sends $B\setminus
  B(y,\epsilon)$ to $\partial B$, so $\HC^{N}(p_y(B_0))=0$ unless
  $y\in B(x, 2\max\{\epsilon,r_0\})$.  Since $\Lip(p_y)\le \frac{r}{\epsilon}$,
  we know that 
  \[\diam p_y(B_0)\lesssim \min\{r,\frac{rr_0}{\epsilon}\}=\frac{rr_0}{\max\{r_0,\epsilon\}},\]
  so 
  \begin{align*}
    \int_{y\in B/2}\HC^{N}(p_y(B_0))\;dy&=\int_{y\in
      B(x,2\max\{\epsilon,r_0\})}\HC^{N}(p_y(B_0))\;dy\\     
    &\lesssim (\max\{\epsilon,r_0\})^N \left(\frac{2rr_0}{\max\{\epsilon,r_0\}}\right)^N\sim r^Nr_0^N.
  \end{align*}
  This proves \eqref{eq:HCsingleBall}, which implies
  \eqref{eq:HCsingInt}.
\end{proof}

We can use the lemma
to construct a map $\Sigma^{(k)}\to \Sigma^{(k)}$ that sends most of
the $k$-skeleton of $\Sigma$ into its $(k-1)$-skeleton.

\begin{lemma}\label{lem:expectedFF}
  Suppose that $\Sigma$ is a QC complex of dimension $N$ and $c>0$ is
  such that each cell of $\Sigma$ is $c$-bilipschitz to a ball.
  Suppose that $k\le \dim \Sigma$ and $\cT_k \subset
  \CLip_*(\Sigma^{(k)})$ is a set of Lipschitz chains of dimension
  $\le k$ which is closed under taking boundaries.  Suppose that $n>0$
  is a number such that for any $k$-cell $K\subset \Sigma$, no more
  than $n$ chains in $\cT_k$ have support that intersects the interior
  of $K$.  Then there is a locally Lipschitz map $p_k:\Sigma\to
  \Sigma$ such that
  \begin{itemize}
  \item $p_k$ fixes $\Sigma^{(k-1)}$ pointwise,
  \item $p_k(\sigma)\subset \sigma$ for each cell $\sigma\subset \Sigma$,
  \item for every $T\in \cT_k$ such that $\dim T<k$, we have
    $p_k(\supp T)\subset \Sigma^{(k-1)}$, 
  \item for each $T\in \cT_k$, 
    \begin{align}\label{eq:massGrowth}
      \mass (p_k)_\sharp(T) & \lesssim_{c,n,N} \mass T\\
\label{eq:HCgrowth}
      \HC^{d}(p_k(\supp T))& \lesssim_{c,n,N} \HC^{d}(\supp T).
    \end{align}
  \end{itemize}
\end{lemma}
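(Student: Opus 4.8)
The plan is to realize $p_k$ as a Federer--Fleming radial projection performed separately on each $k$-cell, with the projection centers chosen so that the averaging estimates of Lemma~\ref{lem:singInt} simultaneously control every chain in $\cT_k$, and then to extend the map off the $k$-skeleton by a sequence of radial extensions.

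First I would fix a $c$-bilipschitz identification $\psi_K\colon K\to B_K=B(0,r_K)$ for every $k$-cell $K$ of $\Sigma$, set $p_k$ equal to the identity on $\Sigma^{(k-1)}$ and on every cell of dimension $<k$, and on each $k$-cell $K$ define $p_k=\psi_K^{-1}\circ p_{y_K}\circ\psi_K$, where $p_{y_K}$ is one of the maps of Lemma~\ref{lem:singInt} for a center $y_K\in B_K/2$ and a radius $\epsilon_K\in(0,r_K/2)$ to be chosen. Since each $p_{y_K}$ is the identity on $\partial B_K$, these definitions agree on shared faces and glue to a locally Lipschitz map of $\Sigma^{(k)}$ into itself that fixes $\Sigma^{(k-1)}$ pointwise and carries each cell into itself; I would then extend it to all of $\Sigma$ cell by cell in increasing dimension, using that each cell of $\Sigma$ is convex in its bilipschitz chart and coning the already-defined boundary map toward the center of the cell. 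This keeps $p_k$ locally Lipschitz and still maps every $\sigma$ into $\sigma$. Because $\cT_k\subset\CLip_*(\Sigma^{(k)})$, only the behaviour of $p_k$ on the $k$-cells enters the estimates \eqref{eq:massGrowth}--\eqref{eq:HCgrowth}, and on a cell of dimension $<k$ the map is the identity, so those pieces contribute nothing new.

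The core of the argument is the choice of the centers $y_K$, which must be good for three things at once. Fix a $k$-cell $K$; by hypothesis at most $n$ chains of $\cT_k$ meet $\inter K$, each a finite sum of Lipschitz simplices, and I let $\mathcal A_K\subset B_K$ be the image under $\psi_K$ of the finitely many simplices of those chains whose dimension is $<k$, a set of vanishing $\mathcal H^k$-measure. Averaging the inequality \eqref{eq:singInt} over $y\in B_K/2$ and applying Markov's inequality shows that, for any prescribed $\delta>0$, outside a set of measure $\le\delta\mu(B_K/2)$ one has $\vol^{\dim T}(p_y\circ\psi_K\circ\alpha)\lesssim_{N,\delta}\vol^{\dim T}(\psi_K\circ\alpha)$ for every relevant $T$ and every simplex $\alpha$ of $T$ landing in $K$; likewise almost every $y$ avoids $\mathcal A_K$; and averaging \eqref{eq:HCsingInt} gives the Hausdorff-content condition described below outside another small set. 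Taking $\delta\sim 1/n$ and intersecting, I pick $y_K$ in the resulting positive-measure set and then take $\epsilon_K$ small enough that $B(y_K,\epsilon_K)$ misses the compact set $\mathcal A_K$, so that $p_{y_K}$ pushes the $<k$-dimensional parts into $\partial B_K\subset\Sigma^{(k-1)}$. Summing the first condition over all $k$-cells, together with the bilipschitz distortion of the $\psi_K$, yields \eqref{eq:massGrowth} with constant depending only on $c$, $n$, $N$, and the avoidance property gives $p_k(\supp T)\subset\Sigma^{(k-1)}$ for $\dim T<k$.

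The main obstacle is \eqref{eq:HCgrowth}: Hausdorff content is only subadditive, so the cell-by-cell estimate $\HC^{d}(p_k(\supp T))\le\sum_K\HC^{d}(p_k(\supp T\cap K))$ can lose an unbounded factor, for instance when $\supp T$ has small content but threads through many cells. To avoid this I would, before choosing the centers, fix for each relevant chain $T$ a ball cover of $\supp T$ with $\sum_i r_i^{d}\le 2\HC^{d}(\supp T)$ and split it into balls that are large relative to the local cell size and balls that are small; by the bounded geometry of the QC complex a small ball meets only boundedly many cells of $\Sigma$, while for a large ball the fact that $p_k$ moves each point only inside its own cell gives $p_k(B_i)\subset B(x_i,Cr_i)$ directly. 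I then take the Hausdorff-content condition on $y_K$ to be $\sum_i\HC^{d}\bigl(p_{y_K}(\psi_K(B_i\cap K))\bigr)\lesssim_N\sum_i r_i^{d}$, the sums over the small balls of the fixed covers that meet $K$, which holds off a small set of centers by averaging \eqref{eq:HCsingInt}. With the $y_K$ so chosen, bounding $\HC^{d}(p_k(\supp T))$ by $\sum_i\HC^{d}(p_k(B_i))$ and separating large balls (handled deterministically), small balls inside $k$-cells (handled by the content condition, with total loss $\lesssim\sum_i r_i^{d}$ thanks to bounded multiplicity), and parts in lower cells (fixed by $p_k$) gives \eqref{eq:HCgrowth}. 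The remaining points — verifying that the $p_{y_K}$ are Lipschitz, carrying out the radial extensions compatibly, and tracking the bilipschitz constants — are routine, and I would either invoke Lemma~\ref{lem:singInt} or defer them.
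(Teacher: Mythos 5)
Your construction is essentially the one in the paper: on each $k$-cell you choose the radial-projection center by averaging the estimates of Lemma~\ref{lem:singInt} and applying Chebyshev to the at most $n$ chains meeting that cell, you take the smoothing radius small enough that the center's $\epsilon$-ball misses the lower-dimensional supports so that these are pushed into $\Sigma^{(k-1)}$, and you extend over higher-dimensional cells radially; this is exactly the paper's proof of the first three bullets and of \eqref{eq:massGrowth}. The one place you deviate is \eqref{eq:HCgrowth}: the paper simply adds the cell-local condition $\HC^{d}(p_y(\supp T_i\cap K))\le\gamma\,\HC^{d}(\supp T_i\cap K)$ to the Chebyshev selection and stops there --- and it is this local form, fed through \eqref{eq:localHCApprox} together with the bounded overlap of Lemma~\ref{lem:pseudFiniteness}, that is actually used later --- whereas you import a global near-optimal ball cover of $\supp T$ to avoid the subadditivity loss in the literal global inequality. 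Your worry about that loss is fair, but the added machinery is where your sketch is least solid: the large/small dichotomy has to be made per ball--cell pair, since a single ball can meet cells of very different sizes, and the ``bounded geometry'' you invoke (a small ball meets only boundedly many cells) is not guaranteed by the definition of a QC complex, so the bounded-multiplicity step would need an extra hypothesis. Since only the cell-local estimate is needed downstream, the paper's simpler selection already suffices; with that simplification your argument coincides with the paper's.
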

\begin{proof}
  We construct $p_k$ on each $k$-cell of $\Sigma$, then extend it to
  the higher-dimensional simplices.  Suppose that $K$ is a $k$-cell
  and suppose that $T_1,\dots, T_n\in \cT_k$ are the only chains in
  $\cT_k$ whose supports intersect $K$.  Since $\Sigma$ is a QC
  complex, we may identify $K$ with a closed euclidean ball $B$ of
  radius $r$.  By Lemma~\ref{lem:singInt}, there is a subset $K_0$ of
  $K$ (corresponding to $B/2$) such that for any sufficiently small
  $\epsilon>0$, there is a family of maps $p_y:K\to K$, $y\in K_0$,
  such that $p_y$ sends $B(y,\epsilon)$ surjectively onto $K$ and
  sends $K\setminus B(y,\epsilon)$ to $\partial K$.  Furthermore,
  \begin{align*}\label{eq:singInt2}
    \frac{1}{\mu(K_0)} \int_{y\in K_0}\mass_K (p_y)_\sharp(S)\;dy&\lesssim_{c,N} \mass_K S\\
    \frac{1}{\mu(K_0)} \int_{y\in K_0}\HC^{d}(p_y(\supp S\cap K))\;dy&\lesssim_{c,N} \HC^{d}(\supp S\cap K)
  \end{align*}
  for every chain $S$ of dimension $\le k$.

  Choose $\epsilon>0$ so that the $\epsilon$-neighborhood of the
  supports of the $T_i$'s is small.  That is,
  $\mu(E_\epsilon)<\mu(K_0)/2$, where
  \[E_\epsilon=\bigcup_{\dim T_i<k} \{y\in K\mid d(y,\supp T_i)<\epsilon\}.\]
  This is possible because $\supp T_i$ is a finite union of Lipschitz
  images of simplices.

  Let 
  \begin{align*}
    F_i(\gamma)=\{y\in K_0 \mid &\mass_K (p_{y})_\sharp(T_i) > \gamma \mass_K T_i \text{ or } \\
    & \HC^{d}(p_y(\supp T_i\cap K))\;dy>\gamma \HC^{d}(\supp T_i\cap K)\}
  \end{align*}
  By Chebyshev's inequality,
  \[\mu(F_i(\gamma)) \lesssim_{c,N} \gamma^{-1}\mu(K_0).\] 
  If $\gamma$ is large enough, depending on $c$, $N$, and $n$, there is some
  $y\in K_0$ such that $y\not\in E_\epsilon$ and $y\not \in F_i(\gamma)$ for all $i$.
  Then for all $i$, we have
  \[\mass_K (p_{y})_\sharp(T_i)\le \gamma\mass_K (T_i),\]
  \[\HC^{d}(p_y(\supp T_i\cap K))\;dy\le\gamma \HC^{d}(\supp T_i\cap K).\]
  Also, $p_y$ fixes $\partial K$ pointwise, and
  $p_y(\supp(T_i))\subset \partial K$ if $\dim T_i<k$.  Let $p_k$ be
  equal to $p_y$ on $K$.

  We define $p_k$ on the $k$-skeleton of $\Sigma$ by repeating this
  process for each $k$-cell.  Then, for each cell $L\subset \Sigma$ with
  $\dim L>k$, we have defined $p_k$ on $\partial L$ so that
  $p_k|_{\partial L}$ is a Lipschitz map, so we extend $p_k$ to
  $L$ by radial extension.  The result is Lipschitz and sends $L$ to
  itself, so the resulting $p_k$ satisfies the conditions of the
  lemma.
\end{proof}

This lets us prove Lemma~\ref{lem:simultFF}.
\begin{proof}[Proof of Lemma~\ref{lem:simultFF}]
  First, we construct $p$.  Recall that $\cT$ is a set of chains which
  is closed under taking boundaries and that $n>0$ is a number such
  that for any cell $D\in \Sigma$, no more than $n$ elements of $\cT$
  intersect $D$.  
  
  We can use Lemma~\ref{lem:expectedFF} repeatedly to construct a
  sequence of Lipschitz maps $p_1,\dots, p_N:\Sigma\to \Sigma$ such that for
  each $k=1,\dots, N$,
  \begin{itemize}
  \item $p_k$ fixes $\Sigma^{(k-1)}$ pointwise,
  \item $p_k(\sigma)\subset \sigma$ for each cell $\sigma\subset \Sigma$,
  \item for every $T\in\cT$ such that $\dim T<k$, we have
    \[(p_k\circ p_{k+1}\circ \dots \circ p_N)(\supp T)\subset \Sigma^{(k-1)},\] 
    and
  \item for each $T\in \cT$, 
    \begin{align}
      \label{eq:massGrowth2}
      \mass (p_k\circ p_{k+1}\circ \dots \circ p_N)_\sharp(T) & \lesssim_{c,n,N} \mass T\\
      \label{eq:HCgrowth2}
      \HC^{d}(\supp (p_k\circ p_{k+1}\circ \dots \circ p_N)_\sharp(T))& \lesssim_{c,n,N} \HC^{d}(\supp T).
    \end{align}
  \end{itemize}
  We first construct $p_N$ by applying Lemma~\ref{lem:expectedFF} to
  $\cT_N=\cT$, then construct $p_k$ inductively by applying
  Lemma~\ref{lem:expectedFF} to
  \[\cT_k=\{(p_{k+1}\circ \dots \circ p_N)_\sharp(T)\mid T\in \cT\}.\]
  By the local finiteness condition, no more than $n$ elements of $\cT_k$
  intersect the interior of any cell of $\Sigma$, so the implicit
  constants in \eqref{eq:massGrowth} and \eqref{eq:HCgrowth} are
  uniformly bounded.  It follows that the implicit constants in
  \eqref{eq:massGrowth2} and \eqref{eq:HCgrowth2} depend only on $c,
  n$, and $N$.  Let
  \[p=p_1\circ \dots \circ p_N.\]

  Then, for any $T\in \cT$, we have $p(\supp T)\subset \Sigma^{(d)},$
  $\mass p_\sharp(T)\lesssim_{c,n,N} \mass T,$ and $\HC^{d}(\supp
  p_\sharp(T)) \lesssim_{c,n,N} \HC^{d}(\supp T)$ as desired.
  Furthermore, if $Y\subset \Sigma$, then $p^{-1}(Y)\subset \nbhd Y$,
  so it is straightforward to check part \ref{it:localApproxNbhd} of
  the lemma and
  to bound $\HC^{d}(\supp P(T)\cap Y).$
\end{proof}

\bibliographystyle{alpha}
\bibliography{TwoT}
\end{document}